\documentclass[12pt,reqno]{amsart}

\usepackage{amssymb,amsmath,amsthm,mathtools,wasysym,calc,verbatim,enumitem,tikz,hyperref,url,mathrsfs,cite,fullpage,bbm,comment} 
\mathtoolsset{showonlyrefs}
\usepackage{scalerel}
\usepackage{stackengine}
\stackMath

\newcommand\pig[1]{\scalerel*[5pt]{\normalsize#1}{%
  \ensurestackMath{\addstackgap[0.5pt]{\normalsize#1}}}}
  \newcommand\pigl[1]{\hspace{0.05cm}\mathopen{\pig{#1}}\hspace{0.03cm}}

\newtheorem{theorem}{Theorem}
\newtheorem{definition}[theorem]{Definition}

\newtheorem{lemma}[theorem]{Lemma}
\newtheorem{claim}[theorem]{Claim}

\newtheorem{observation}[theorem]{Observation}

\newtheorem*{chernoff}{Chernoff's inequality}

\theoremstyle{remark}
\newtheorem*{remark*}{Remark}
\newtheorem{remark}[theorem]{Remark}

\numberwithin{theorem}{section}

\renewcommand{\phi}{\varphi}

\newcommand{\gap}{\hspace{0.02cm}}
\newcommand{\gaap}{\hspace{0.03cm}}
\newcommand{\gaaap}{\hspace{0.04cm}}

\newcommand{\ina}{\hspace{0.01cm}\in\hspace{0.01cm}}

\newcommand{\setminusa}{\hspace{0.01cm}\setminus\hspace{0.01cm}}

\newcommand{\eqa}{\hspace{0.02cm}=\hspace{0.015cm}}
\newcommand{\nea}{\hspace{0.025cm}\ne\hspace{0.025cm}}

\newcommand{\eps}{\varepsilon}
\newcommand{\cF}{\mathcal F}

\newcommand{\cA}{\mathcal A}
\newcommand{\cB}{\mathcal B}
\newcommand{\cD}{\mathcal{D}}

\newcommand{\cN}{\mathcal N}

\newcommand{\cQ}{\mathcal Q}
\newcommand{\cR}{\mathcal R}

\newcommand{\cI}{\mathcal I}

\def\1{\mathbbm{1}}

\renewcommand{\le}{\leqslant}
\renewcommand{\ge}{\geqslant}

\newcommand{\Ex}{\mathbb E}
\renewcommand{\Pr}{\mathbb{P}}

\newcommand{\N}{\mathbb N}

\newcommand{\Bin}{\operatorname{Bin}}
\newcommand{\Ber}{\operatorname{Ber}}

\begin{document}

\title{On the multicolour Ramsey numbers $R(3,3,k)$} 

\author{Bruno Andrades, Marcelo Campos and Robert Morris}

\address{IMPA, Estrada Dona Castorina 110, Jardim Bot\^anico, Rio de Janeiro, 22460-320, Brasil}\email{bruno.andrades|marcelo.campos|rob@impa.br}

\thanks{BA was supported by CNPq; MC was partially supported by Serrapilheira (grant R-2412-51283); and RM~was partially supported by CNPq (Proc.~407970/2023-1 and Proc.~305818/2026-0), by FAPERJ (Proc.~E-26/204.288/2024), and by FAPESB (Proc.~012/2022-APP0044/2023)}

\begin{abstract}
In this paper we determine the Ramsey number $R(3,3,k)$ up to a constant factor, showing that
$$R(3,3,k) = \Theta\bigg( \frac{k^3}{(\log k)^2} \bigg).$$
The proof of the lower bound combines the Hefty--Horn--King--Pfender construction for $R(3,k)$ with the method of Alon and R\"odl. Using the same proof, we also determine the $r$-colour Ramsey numbers $R(3,\ldots,3,k)$ up to a constant factor for every fixed $r \ge 3$. 
\end{abstract}

\maketitle

\vspace{-2em}

\section{Introduction}

The off-diagonal Ramsey number $R(3,k)$ is the smallest $n \in \N$ such that every red-blue colouring of the edges of the complete graph $K_n$ contains either a red triangle or a blue copy of $K_k$. The study of these numbers has its origins in the seminal work of Ramsey~\cite{R30} and Erd\H{o}s and Szekeres~\cite{ESz} in the 1930s, and over the past several decades it has inspired the development of a number of important techniques in probabilistic combinatorics, such as the method of alterations~\cite{E59,E61} and the semi-random method~\cite{AKSz,Kim}. We refer the reader to the surveys~\cite{ICM26} and~\cite{SpR3k} for the history of the problem and its impact on the field. 

In this paper we will study the natural multicolour analogue of these Ramsey numbers. The first case of this problem is $R(3,3,k)$, which is the smallest $n \in \N$ such that every red-blue-green colouring of the edges of $K_n$ contains either a red triangle, a green triangle or a blue copy of $K_k$. While the growth rate of $R(3,k)$ was determined over 30 years ago in famous papers of Ajtai, Koml\'os and Szemer\'{e}di~\cite{AKSz} and Kim~\cite{Kim}, which together imply that 
$$R(3,k) = \Theta\bigg( \frac{k^2}{\log k} \bigg),$$
progress on the multicolour problem has been much slower, and for many years it was an open problem even to show that $R(3,3,k) \gg R(3,k)$. This was finally resolved in 2005 in a groundbreaking paper of Alon and R\"odl~\cite{AR}, who obtained the bounds  
\begin{equation}\label{eq:AR:bounds}
\frac{c k^3}{(\log k)^4} \le R(3,3,k) \le \frac{C k^3}{(\log k)^2}
\end{equation}
for some constants $C > c > 0$. To be more precise, the upper bound in~\eqref{eq:AR:bounds} follows from the method of Ajtai, Koml\'os and Szemer\'{e}di~\cite{AKSz}, while to prove the lower bound they showed that there exists a triangle-free graph with few independent $k$-sets, and then took random copies of this graph as the red and green edges (see Section~\ref{sec:AR:method}).

In this paper we will improve the lower bound of Alon and R\"odl by a factor of $(\log k)^2$, and hence determine $R(3,3,k)$ up to a constant factor. 

\begin{theorem}\label{thm:R33k}
$$R(3,3,k) = \Theta\bigg( \frac{k^3}{(\log k)^2} \bigg)$$
\end{theorem}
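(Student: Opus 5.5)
The upper bound is already contained in~\eqref{eq:AR:bounds}, so the content is the lower bound $R(3,3,k) \ge c k^3/(\log k)^2$. We follow the Alon--R\"odl scheme. Let $G$ be a triangle-free graph on $n$ vertices. Take two independent uniformly random permutations $\sigma_1,\sigma_2$ of $V(G)$ and place copies $\sigma_1(G)$ and $\sigma_2(G)$ on the same vertex set. Colour an edge red if it lies in $\sigma_1(G)$, green if it lies in $\sigma_2(G)\setminus\sigma_1(G)$, and blue otherwise. There is then no red or green triangle. A blue $K_k$ is a $k$-set that is independent in both copies, so
\[
\Ex\big[\#\text{blue } K_k\big] = \frac{I_k(G)^2}{\binom nk},
\]
where $I_k(G)$ is the number of independent $k$-sets in $G$. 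It therefore suffices to find a triangle-free $G$ on $n = ck^3/(\log k)^2$ vertices with $I_k(G)^2 < \binom nk$, that is,
\[
I_k(G) \le \binom{n}{k}^{1/2} \approx \Big(\frac{en}{k}\Big)^{k/2}.
\]

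For $G$ we take the Hefty--Horn--King--Pfender (HHKP) graph, rather than the random-like graph used by Alon and R\"odl. Its degrees are $d \approx \sqrt{n\log n}$ and its independence number is $O(\sqrt{n\log n})$, which matches $R(3,k)$ up to a constant. The needed bound says, roughly, that a random $k$-set is independent with probability about $\binom nk^{-1/2} \approx (k/en)^{k/2}$. Heuristically, a $k$-set spans about $k^2 d/(2n)$ edges in expectation. If independence behaved like a Poisson/Janson event, we would get
\[
\Pr(\text{independent}) \approx \exp\big(-k^2 d/2n\big).
\]
We need $k^2 d/(2n) \gtrsim (k/2)\log(n/k)$, i.e. $kd \gtrsim n\log n$. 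With $n = k^3/(\log k)^2$ and $d = \sqrt{n\log n} \approx k^{3/2}/(\log k)^{1/2}$, we get $kd \approx k^{5/2}(\log k)^{-1/2}$ and $n\log n \approx k^3/\log k$. These do not match, so $G$ must be denser than the $R(3,k)$-extremal density. We should therefore choose $d$ near $n\log n/k \approx k^2/\log k$, which is much larger than $\sqrt n$. That is still allowed, because independent $k$-sets only need to be rare, not nonexistent. So the plan is to run the HHKP construction with a density parameter tuned so that $d \approx C n\log k/k$, while it stays triangle-free.

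The key steps are as follows.
\begin{enumerate}
\item Recall the HHKP construction and its pseudorandom properties. We expect it to be triangle-free with tightly controlled codegrees and edge distribution, with the construction terminated at whatever point yields density $d$.
\item Prove a counting lemma bounding $I_k(G) \le \big(e^{-\Omega(kd/n)}\, en/k\big)^{k}$, or equivalently show that the number of independent $k$-sets is at most roughly $\binom nk e^{-c k^2 d/n}$. For this we use a container-type or sequential-exposure argument. Build the $k$-set one vertex at a time. Each chosen vertex $v$ removes about $d$ vertices from the pool of available ones, and the pseudorandom properties of $G$ ensure that the neighbourhoods do not overlap too much, so that after choosing $i$ vertices about $n e^{-id/n}$ vertices remain available.
\item Take a union bound and the first moment over the two permutations.
\end{enumerate}

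The main obstacle is step~2. We must show that the neighbourhoods of the chosen vertices are essentially ``independent'' throughout the process, even though $d$ is far above the triangle-free threshold $\sqrt{n\log n}$, where the graph is very structured. Our first attempt would be to control the size of $|N(S)|$ for all sets $S$ of size up to $k$, using the HHKP property that the number of common neighbours is small, via an inclusion--exclusion (Bonferroni) estimate. If that fails for atypical sets, we would handle those by a separate counting bound, showing that such sets are too few to contribute.
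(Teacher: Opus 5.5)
Your reduction is correct and is exactly the paper's (Section~\ref{sec:AR:method} with $r=3$), and your density $d=\Theta(n\log k/k)$ agrees with the paper's $p=C\log k/k$. The gap is step~2, which is the whole content of the paper (Theorem~\ref{thm:acmgraph}), and you have the target estimate wrong. In a triangle-free graph every neighbourhood is independent, so $|\cI_k(G)|\ge\binom{d}{k}$. At $n=\Theta(k^3/(\log k)^2)$ the admissible excess over this is only $\big(\binom nk/{\binom{d}{k}}^{2}\big)^{1/2}=2^{O(k)}$. So what you must prove is $|\cI_k(G)|\le 2^{O(k)}\binom{pn}{k}$: an upper bound sharp up to $2^{O(k)}$ against the trivial lower bound, whose main term comes from subsets of neighbourhoods. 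Your Poisson heuristic $e^{-k^2d/2n}=e^{-pk^2/2}$ does not model this regime. Your condition $pk\gtrsim 2\log k$ is, up to an additive $O(1)$ in $pk$, exactly where $e^{-pk^2/2}$ falls to $p^k\approx\binom dk/\binom nk$, and beyond that it is simply false. Consequently a bound $\binom nk e^{-ck^2d/n}$ with an unspecified constant is either false or too weak, unless $cC=1-\frac{\log\log k}{\log k}+O(\frac{1}{\log k})$.

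The proposed mechanism also fails, for structural reasons. If $I\subset N(w)$, the pool of available vertices always contains $N(w)\setminus I$, so it never drops below $d-k$. By contrast, $ne^{-id/n}<d$ once $i\gtrsim\log(1/p)/p\approx k/C$. These sets must be detected and counted as the main term, not treated as exceptions. Moreover, the HHKP graph does not have small codegrees. It is not a stopped process but a static superposition of two random blow-ups of $G(m,p)$ with closed edges deleted. Vertices in the same $A$-fibre therefore have identical $A^*$-neighbourhoods and codegrees comparable to their degrees; with the forced choice $m=\Theta(p^{-2})$, each fibre is even an independent set of size $\Theta(k)$. Inclusion--exclusion for $|N(S)|$ thus fails for every $S$ meeting few fibres.

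What is missing is the paper's actual machinery. First, the fibre-cover number $\tau$ and the structural Lemma~\ref{lem:structural:lemma}, which sorts $k$-sets into classes (a)--(e). Second, deterministic counting of (a)--(c), using the degree bounds and a pseudorandom choice of the partitions (Lemma~\ref{lem:partitions:property}). Third, for (d)--(e), not deterministic pseudorandomness of $G$ but the randomness of $A$ itself, via two lemmas:
\begin{itemize}
\item Lemma~\ref{lem:manyopen:probbound}: an $A$-transversal spanning $e(H)\ge k^{3/2}$ pairs that are $B$-open is independent with probability at most $e^{-cpe(H)}$.
\item Lemma~\ref{lem:bipartite:lemma}: well-spread sets $S,T$, with no neighbourhood containing an $\eps$-fraction of both, span at least $|S||T|/32$ open pairs.
\end{itemize}
All of this, together with the counting in case (e), requires $m=\Theta(p^{-2})$.
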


Our proof of this theorem combines the method of Alon and R\"odl~\cite{AR} with a construction that was introduced recently by Hefty, Horn, King and Pfender~\cite{HHKP}, who used it to improve the lower bound for $R(3,k)$ to within a factor of $2 + o(1)$ of the best-known upper bound, which was proved by Shearer~\cite{Sh83} in 1983. This construction involves taking two random blow-ups of $G(m,p)$, placing them randomly on top of one another, and then deleting any edge whose endpoints have a common neighbour in either of the two copies (see Section~\ref{sec:construction}). While this construction is reminiscent of that of Alon and R\"odl, it was actually inspired by a more recent approach of Campos, Jenssen, Michelen and Sahasrabudhe~\cite{CJMS25}. Let us also remark that, in addition to our use of it in this paper, the Hefty--Horn--King--Pfender method has already found several other significant and surprising applications~\cite{Br26,CJMPS,KSSW,MSV}.

Our method also applies to the $r$-colour Ramsey number $R_r(3,\ldots,3,k)$, which is the smallest $n \in \N$ such that every $r$-colouring of the edges of $K_n$ contains either a monochromatic triangle in one of the first $r - 1$ colours, or a monochromatic copy of $K_k$ in colour $r$. The methods of Ajtai, Koml\'os and Szemer\'{e}di~\cite{AKSz} and Alon and R\"odl~\cite{AR} imply the bounds\footnote{Alon and R\"odl (see~\cite[Theorem~3.2]{AR}) stated slightly weaker bounds in their paper, but with a little care one can obtain~\eqref{eq:R333k:AR} using their method, see~\cite[Theorem~3]{HW}.}
\begin{equation}\label{eq:R333k:AR}
\frac{ck^r}{(\log k)^{2r-2}} \le R_r(3,\ldots,3,k) \le \frac{Ck^r}{(\log k)^{r-1}}
\end{equation}
for some constants $C = C(r)$ and $c = c(r)$; for the reader's convenience we recall the proof of the upper bound in Section~\ref{sec:AR:method}. Our proof of Theorem~\ref{thm:R33k} can also be extended to the general setting, and allows us to determine $R_r(3,\ldots,3,k)$ up to a constant factor (depending on $r$). 

\begin{theorem}\label{thm:R333k}
For every fixed $r \ge 2$ we have
$$R_r(3,\ldots,3,k) = \Theta\bigg( \frac{k^r}{(\log k)^{r-1}} \bigg)$$
\end{theorem}

In order to use the method of Alon and R\"odl to prove Theorem~\ref{thm:R333k}, we need to construct a triangle-free graph with the claimed number of vertices and sufficiently few independent $k$-sets. The main contribution of this paper is to show that the construction of Hefty, Horn, King and Pfender~\cite{HHKP}, after adjusting the parameters correctly, has the required property. To be slightly more precise, the main theorem of~\cite{HHKP} states that there exists a triangle-free graph with $n$ vertices and no independent sets of size $k \ge (1 + \eps) \displaystyle{\sqrt{n \log n}}$. We will prove the following theorem, which shows that the number of smaller independent sets is (roughly speaking) dominated by subsets of neighbourhoods. 

\begin{theorem}\label{thm:acmgraph} 
For every $n \ge k^2 / \log k$, there exists a triangle-free graph $G$ with $n$ vertices and at most
$$2^{O(k)} {pn \choose k}$$
independent sets of size $k$, where $pk = \log k$. 
\end{theorem}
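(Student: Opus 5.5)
We will build $G$ via the Hefty--Horn--King--Pfender construction, with the parameters adjusted so that typical degrees are about $pn$ where $pk = \log k$. Concretely, take $m$ and a density $q$, and two independent copies of $G(m,q)$. Blow each up to $n$ vertices by random partitions of $[n]$ into $m$ parts of size $n/m$, where the two partitions are independent. Let $H$ be the union of the two blown-up edge sets. Finally, delete every edge of $H$ whose endpoints have a common neighbour inside either of the two copies; call the result $G$. The parameters are chosen so that each vertex has degree $(1+o(1))pn$ after deletion. Since $n \ge k^2/\log k$, we have $pn \ge k$, so this is meaningful. Triangle-freeness should follow as in~\cite{HHKP}: a triangle in $H$ either lies inside one copy, where its edges are deleted because of the common neighbour, or uses both copies, where a deleted edge again appears. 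If needed, we additionally delete a small number of edges at the end to destroy any remaining triangles.

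The counting proceeds by encoding each independent $k$-set $I$ in $G$ with a short certificate, in the spirit of the graph container method. We run a greedy algorithm on $I$. Repeatedly pick the vertex $v \in I$ of maximum degree into the current available set $A$, initially $V(G)$. Record $v$ and remove $N_G(v)\cup\{v\}$ together with the low-degree vertices from $A$. The goal is to show that, with high probability over the construction, every set $A$ that arises satisfies one of two conditions.
\begin{itemize}
\item[(a)] $A$ shrinks by a factor of roughly $(1 - p)$ at each step while $|A| \gg pn$. This needs edge-distribution estimates: every set $A$ with $|A| \ge pn$ (up to polylog factors) has many edges, i.e.\ $e_G(A) \approx p|A|^2/2$. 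For $G(m,q)$ blow-ups with deletions this should follow from Chernoff's inequality and a union bound over unions of parts.
\item[(b)] Once $|A| = O(pn)$, the remaining elements of $I$ are chosen freely, costing at most $\binom{O(pn)}{k - s}$.
\end{itemize}
The number of greedy steps is $s = O((\log k)/p) = O(k)$, and the cost of recording them is about $n^s$. This is too much, and so is the crude estimate $\binom{n}{s}\binom{e^{-ps}n}{k-s}$. We therefore need a finer balance: $s \approx k/\log k \cdot \log(n/pn)$, with each step costing $\log n$ bits, must be absorbed into the $2^{O(k)}$ and $\binom{pn}{k}$ terms.

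The main obstacle is that a single HHKP graph is not pseudorandom enough at scale $pn$. Independent sets inside a single blow-up part, or inside common neighbourhoods, could be numerous. The bound $2^{O(k)}\binom{pn}{k}$ says these structures dominate, up to exponential factors. So I would classify independent $k$-sets by how they intersect the blow-up parts and the neighbourhoods of the two underlying $G(m,q)$ copies. The blow-up lets us choose $I$ by first picking the parts it meets, at most $\binom{m}{\cdot}$ choices, and then vertices inside those parts. I would choose $m$ so that a part has size comparable to $pn/\log k$, which makes the factors $\binom{m}{j}$ affordable. The core lemma to prove is then: for every set of $j$ parts in copy 1 and $j'$ parts in copy 2 that is independent after deletions, the union of the allowed vertices has size at most $O(pn)\cdot e^{O(k)/k}$ in the appropriate sense. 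This reduces to counting independent sets in $G(m,q)$ with very few edges remaining, together with a Janson-type bound on the deletion step. I expect this lemma to be the hardest part, and I would try to prove it by first conditioning on the two underlying random graphs having Shearer-type neighbourhood structure.
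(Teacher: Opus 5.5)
There is a genuine gap, and it is the step you yourself flag as ``too much'': a greedy/container fingerprint cannot give the factor $2^{O(k)}$, and rebalancing does not help. Write $p = C\log k/k$, as the paper does, so that constants can be absorbed. Even in a graph with ideal edge distribution down to scale $O(pn)$, the fingerprint has $s \approx \log(1/p)/p \approx k/C$ vertices, and
$$\binom{n}{s}\binom{O(pn)}{k-s} \approx 2^{O(k)} \Big( \frac{k}{ps} \Big)^{s} \binom{pn}{k} = k^{(1-o(1))k/C} \binom{pn}{k}.$$
Stopping the greedy at a larger container only makes this worse. Optimising over $C$ still leaves a factor of roughly $(\log k)^{k}$ above the target, and in the Alon--R\"odl deduction that costs powers of $\log k$, which is exactly the loss the theorem is meant to remove. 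Your property (a) also fails at the scale that (b) needs. For every $v$, the set $N_A(v)$ (the neighbourhood of $v$ in $A^*$) has about $pn$ vertices and is independent in $G$, because any two of its vertices have the common neighbour $v$ in $A^*$ and so do not lie in $O_A$.

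The missing idea is a first-moment count over $k$-sets that uses the randomness of $A \sim G(m,p)$ itself. Lemma~\ref{lem:manyopen:probbound} shows that an $A$-transversal $V(H)$ with $E(H) \subset O_B$ is independent (on the event $\Delta(A) \le 2pm$) with probability at most $e^{-cpe(H)}$. When $e(H) = \Omega(k^2)$ this is $k^{-\Omega(Ck)}$, which beats even the trivial $\binom nk$; this is case $(d)$. Applied to an unbalanced pair with $|S| \ge pm$ and $|T| \ge ck$, the same lemma handles case $(e)$. The deterministic Lemma~\ref{lem:structural:lemma}, built on the pseudorandom property in Lemma~\ref{lem:bipartite:lemma}, guarantees that every $k$-set falls into one of three kinds:
\begin{itemize}
\item it contains such an open-pair structure;
\item it is covered by $N_{G^*}(v)$ together with $O(pm)$ fibres;
\item it is covered by $(\log k)^6$ fibres together with a set meeting few fibres.
\end{itemize}
The last two kinds are counted directly. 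Your instinct that neighbourhoods plus a few parts dominate matches cases $(a)$--$(c)$. Without the probabilistic bound, however, nothing disposes of the remaining sets.

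Your choice of $m$ is also fatal. Parts of size about $pn/\log k$ mean $m \approx \log k/p$. With $q = \Theta(p)$, the base graphs then have average degree $qm = \Theta(\log k)$ and independence number about $(2/q)\log(qm)$, which is a $\Theta(\log\log k/\log k)$-fraction of $[m]$. If $J_A$ and $J_B$ are independent sets in $A$ and $B$, then $\pi_A^{-1}(J_A) \cap \pi_B^{-1}(J_B)$ is independent in $G^* \supseteq G$. It typically has $\Theta\big( n (\log\log k/\log k)^2 \big) \gg pn$ vertices, so its $k$-subsets alone exceed $2^{O(k)}\binom{pn}{k}$ by a factor $\big( k/(\log k)^{O(1)} \big)^{k}$.

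The paper takes $m = 1/(8p^2)$, and this is essentially forced. On one side, $p^2 m \le 1/8$ keeps a constant fraction of pairs open (Lemmas~\ref{lem:reveal:outside} and~\ref{lem:bipartite:lemma}). On the other, fibres of size $8p^2 n$ make $k$-sets covered by $O(1/p)$ or $(\log k)^6$ fibres cheap to count. With this choice the intersection above has only $O\big( p^2 (\log k)^2 n \big) \ll pn$ vertices. Your proposed core lemma is too imprecise to check, and for your $m$ this example contradicts it. (Minor: the construction is deterministically triangle-free, so no extra deletions are needed.)
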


The main step in the proof of Theorem~\ref{thm:acmgraph} is a structural lemma (Lemma~\ref{lem:structural:lemma}), which says that if our graph $G$ satisfies a certain pseudorandom property, then every set of $k$ vertices of $G$ satisfies one of five properties. For three of these properties we will be able to bound the number of such sets with some simple counting; for the other two, we will need a bound for the probability that a set with many `open edges' is independent. This bound, which is stated in Lemma~\ref{lem:manyopen:probbound} and proved in Sections~\ref{sec:inside:edges} and~\ref{sec:outside:edges}, is similar to others that have been used in previous applications of the method (see, for example,~\cite[Lemma~4.1]{HHKP} or~\cite[Lemma~3.4]{MSV}), but in order to deal with small values of $k$ we need a slightly more general version, which may be useful for other applications. The pseudorandom property that we will use to prove the structural lemma is stated in Lemma~\ref{lem:bipartite:lemma}, and proved in Section~\ref{sec:open:pairs}. 

The rest of the paper is organised as follows. First, in Section~\ref{sec:AR:method}, we will recall from~\cite{AR} the proof of the upper bound in Theorem~\ref{thm:R333k}, and show how to deduce the lower bound from Theorem~\ref{thm:acmgraph}. Next, in Sections~\ref{sec:construction} and~\ref{outline:sec}, we will define the random graph $G$ that we use to prove Theorem~\ref{thm:acmgraph}, outline how we will bound the number of independent $k$-sets in $G$, and state Lemmas~\ref{lem:manyopen:probbound} and~\ref{lem:bipartite:lemma}. In Sections~\ref{sec:inside:edges}--\ref{sec:open:pairs} we will prove these two probabilistic lemmas, and in Section~\ref{sec:structural} we will state and prove our key structural lemma. Finally, in Section~\ref{final:proof:sec}, we will complete the proof of Theorem~\ref{thm:acmgraph} via some simple counting.

\section{Deducing Theorem~\ref{thm:R333k} from Theorem~\ref{thm:acmgraph}}\label{sec:AR:method}

In this section we will assume that Theorem~\ref{thm:acmgraph} holds, and deduce the following version of Theorem~\ref{thm:R333k} in which $r$ is allowed to grow with $k$. 

\begin{theorem}\label{thm:R333k:general}
There exists a constant $C > 0$ such that
\begin{equation}\label{eq:R333k:general}
\frac{k^r}{(C\log k)^{r-1}} \le R_r(3,\ldots,3,k) \le \frac{(Crk)^r}{(\log k)^{r-1}}
\end{equation}
for every $r \ge 2$ and $k \ge C^3$.
\end{theorem}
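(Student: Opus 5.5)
Both bounds are proved by induction on $r$. The upper bound uses the Ajtai--Koml\'os--Szemer\'edi bound for the independence number of triangle-free graphs, in Shearer's form. The lower bound uses the Alon--R\"odl random-overlay argument, with Theorem~\ref{thm:acmgraph} supplying the base graph.

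\textbf{Upper bound.} Let $N_r(k)$ denote $R_r(3,\ldots,3,k)$. Take an $r$-colouring of $K_n$ with no monochromatic triangle in colours $1,\ldots,r-1$ and no $K_k$ in colour $r$.
\begin{itemize}
\item For any vertex $v$, its colour-$1$ neighbourhood contains no colour-$1$ edge. So inside it the colouring uses only $r-1$ colours, giving $d_1(v) < N_{r-1}(k)$.
\item Hence the colour-$1$ graph $G_1$ has maximum degree $d < N_{r-1}(k)$ and is triangle-free. Shearer's bound gives an independent set $I$ in $G_1$ of size at least $c\,n\log d/d$.
\item Inside $I$ only colours $2,\ldots,r$ appear, so $|I| < N_{r-1}(k)$. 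This yields
\begin{equation}
n \le C\, N_{r-1}(k)^2/\log N_{r-1}(k).
\end{equation}
\end{itemize}
This recursion is too lossy: squaring at each step is the wrong mechanism. Instead I would use the Alon--R\"odl inequality, which looks at colour-$r$ independence in the union of the first $r-1$ colour graphs.
\begin{itemize}
\item Each colour class $G_i$ for $i<r$ is triangle-free with maximum degree below $D := N_{r-1}(k)$.
\item Pick a vertex $v$ and its colour-$i$ neighbourhood $N_i(v)$. Inside it colour $i$ is absent, so $|N_i(v)| < N_{r-1}(k)$. This gives total degree $\sum_{i<r} d_i(v) < (r-1)N_{r-1}(k)$.
\item We need a $K_k$ in colour $r$, i.e.\ an independent set of size $k$ in $H = \bigcup_{i<r} G_i$. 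Each $G_i$ is triangle-free, so $H$ is locally sparse: every neighbourhood in $H$ spans a graph that is $(r-1)$-colourable in a suitable sense.
\item The AKS/Alon bound for locally sparse graphs then gives $\alpha(H) \ge c\, n\log \Delta/(r\Delta)$ with $\Delta \le rN_{r-1}(k)$. Alternatively, Johansson-type bounds for graphs whose neighbourhoods have bounded chromatic number give $\alpha(H)\gtrsim n\log\Delta/(r\Delta)$.
\item Requiring $\alpha(H) < k$ gives $N_r(k) \le C r k N_{r-1}(k)/\log k$.
\end{itemize}
Iterating from $N_1(k)=k$ produces the claimed $(Crk)^r/(\log k)^{r-1}$, with the factors of $r$ absorbed.

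\textbf{Lower bound.} Apply Theorem~\ref{thm:acmgraph} with $n = k^r/(C\log k)^{r-1}$, which satisfies $n \ge k^2/\log k$. This gives a triangle-free graph $G$ on $[n]$ with at most $2^{O(k)}\binom{pn}{k}$ independent $k$-sets, where $p=\log k/k$. Take $r-1$ independent uniformly random relabellings $G_1,\ldots,G_{r-1}$ of $G$.
\begin{itemize}
\item Colour each edge by the smallest $i$ with that edge in $G_i$, and colour it $r$ if it lies in no $G_i$.
\item Every colour class $i<r$ is a subgraph of $G_i$, hence triangle-free.
\item A colour-$r$ $K_k$ is a $k$-set that is independent in every $G_i$. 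A fixed $k$-set is independent in a random copy with probability $q = I_k(G)/\binom nk$.
\item By a union bound, the expected number of such sets is
\begin{equation}
\binom nk q^{r-1} \le \binom nk \left(\frac{2^{O(k)}\binom{pn}{k}}{\binom nk}\right)^{r-1} \le \left(\frac{en}{k}\right)^k \left(2^{O(1)} p\right)^{k(r-1)}.
\end{equation}
\item This is less than $1$ provided $n\,(C' p)^{r-1} < k/e$, i.e.\ $n < k^r/(C''\log k)^{r-1}$.
\end{itemize}

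The main obstacle is making the upper-bound recursion carry only a factor $rk/\log k$ per step with an absolute constant. That requires a local-sparsity version of Shearer's bound for unions of triangle-free graphs, with dependence on $r$ that is polynomial rather than exponential. I would first try bounding the number of triangles in $H$ (each lies in at most two colours) and apply the Alon--Krivelevich--Sudakov bound for graphs with few triangles per vertex. The lower bound, by contrast, is routine once Theorem~\ref{thm:acmgraph} is granted.
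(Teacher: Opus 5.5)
Your lower bound is the paper's argument: independent random relabellings of the graph from Theorem~\ref{thm:acmgraph}, colouring by least index, then a first-moment bound. It is fine, except that for $r=2$ your $n=k^2/(C\log k)$ does not satisfy $n\ge k^2/\log k$, so that case needs Kim's theorem, which is what the paper cites.

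The upper bound has a genuine gap. Your key step says that $H=G_1\cup\cdots\cup G_{r-1}$ is locally sparse, with $(r-1)$-colourable neighbourhoods, \emph{because} each $G_i$ is triangle-free. This is false. The set $N_i(v)$ contains no colour-$i$ edges, but the colour-$j$ edges inside it ($j\ne i$) can form a triangle-free graph of large chromatic number. For example, take $r=3$ and a triangle-free graph $T$ on $\Theta(k^2/\log k)$ vertices with $\alpha(T)<k$. Colour $E(T)$ with colour $2$ and the non-edges with colour $3$, and join a new vertex $v$ to all of $V(T)$ in colour~$1$. This is a valid colouring, and $H[N_H(v)]=T$ has chromatic number at least of order $k/\log k$. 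Nor does triangle-freeness of the colour classes give any sparsity at all: a colour-$1$ star whose leaves span a complete bipartite colour-$2$ graph has dense neighbourhoods in $H$. So there is no small local chromatic number to feed into Alon's or Johansson's theorems, and your inequality $\alpha(H)\ge cn\log\Delta/(r\Delta)$ is unsupported.

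Your fallback of counting triangles is the right idea; it is exactly the paper's route via Lemma~\ref{lem:AKSz:fewtriangles}. However, the estimate that makes it work is missing, and your parenthetical has the inequality backwards.

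1. Every triangle of $H$ uses \emph{at least} two of the colours $1,\ldots,r-1$. So it contains an edge $uv$ whose third vertex lies in $N_i(u)\cap N_j(v)$ for some $i\ne j$.
2. No edge of colour $i$ or $j$ can lie inside $N_i(u)\cap N_j(v)$, since it would close a monochromatic triangle with $u$ or $v$. Hence $|N_i(u)\cap N_j(v)|\le R_{r-2}(3,\ldots,3,k)$, with $R_1(k)=k$. This is where the Ramsey hypothesis, through the induction hypothesis at level $r-2$, produces the local sparsity.
3. Set $d=r(Crk)^{r-1}/(\log k)^{r-2}\ge\Delta(H)$. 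Induction gives $R_{r-2}\le d\log k/(Cr^2k)$, so $H$ has at most $r^2e(H)\cdot d\log k/(Cr^2k)\le d^2n\log k/(Ck)$ triangles.
4. Lemma~\ref{lem:AKSz:fewtriangles} with $\lambda=Ck/\log k$ then gives $n\le Cd\,\alpha(H)/\log k\le (Crk)^r/(\log k)^{r-1}$, since $\alpha(H)<k$.

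The $r^2$ colour pairs cancel against the $r^2$ in the bound for $R_{r-2}$. That cancellation is what keeps the loss to a single factor of $r$ per step, and it is precisely the point you identified as the main obstacle but did not resolve.
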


Alon and R\"odl~\cite{AR} proved the upper bound in~\eqref{eq:R333k:general} with an extra factor of $(\log \log k)^{r-2}$, and explained how this factor could be removed using an observation due to Benny Sudakov. A nice exposition of the proof was given by He and Wigderson~\cite{HW}, who also generalised it to arbitrary off-diagonal multicolour Ramsey numbers, though without keeping track of the dependence on $r$. We will show below that their argument gives the upper bound in~\eqref{eq:R333k:general}. 

\pagebreak

The upper bound follows from the following lemma of Ajtai, Koml\'os and Szemer\'{e}di~\cite{AKSz}, which follows easily from the corresponding statement for triangle-free graphs.

\begin{lemma}\label{lem:AKSz:fewtriangles}
There exists a constant $c > 0$ such that if $1 \le \lambda \le d \le n$, then
\begin{equation}\label{eq:AKSz:bound}
\alpha(G) \ge \frac{cn}{d} \log \lambda
\end{equation}
for every graph $G$ with $n$ vertices, $\Delta(G) \le d$ and at most $d^2 n / \lambda$ triangles.
\end{lemma}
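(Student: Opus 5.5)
We deduce Lemma~\ref{lem:AKSz:fewtriangles} from the triangle-free version of the Ajtai--Koml\'os--Szemer\'edi theorem, which we take as known. It says that every triangle-free graph $H$ with $m$ vertices and average degree at most $D$ satisfies $\alpha(H) \ge c_0 \frac{m}{D}\log D$, for some absolute $c_0 > 0$ and all $D \ge 1$. (Shearer's bound gives this with $c_0 = 1 - o(1)$, using the usual convention for small $D$.) The plan is to pass to a random induced subgraph that has few triangles relative to its size, delete one vertex from each surviving triangle, and apply the triangle-free bound to what remains.

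First, the degenerate cases. If $\lambda$ is bounded, say $\lambda \le e^{2}$, then $\log\lambda = O(1)$, and the greedy bound $\alpha(G) \ge n/(d+1) \ge n/(2d)$ already gives~\eqref{eq:AKSz:bound} once $c$ is small. So we may assume $\lambda$ is large. Next choose a sampling probability $q = \lambda^{-1/2}/d$ times a constant, roughly; more precisely set $q = \min\{1, \sqrt{\lambda}/(2d)\}$. Note $q \le 1$ because $\lambda \le d$ forces $\sqrt{\lambda} \le d$.

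Let $S$ be a $q$-random subset of $V(G)$. The quantities to track are:
\begin{itemize}
\item $\Ex|S| = qn$;
\item the expected number of edges is at most $q^2 dn/2$;
\item the expected number of triangles is at most $q^3 d^2 n/\lambda$.
\end{itemize}
With $q d = \sqrt{\lambda}/2$, the triangle count becomes
$$q^3 d^2 n/\lambda = qn \cdot (qd)^2/\lambda = qn/4.$$
By linearity, we can fix an outcome $S$ for which $|S| - (\#\text{triangles}) - \frac{1}{4}\cdot(\text{a small multiple of the edge count})$ is at least its expectation. More simply, we use Markov-type averaging to get simultaneously:
\begin{itemize}
\item $|S| \ge qn/2$;
\item at most $qn/4$ triangles;
\item average degree $O(qd)$.
\end{itemize}
To get this, consider $X = |S| - 2T - |E(S)|/(qd)$ and note $\Ex X \ge qn(1 - 1/2 - 1/2)$. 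The constants should be tuned, e.g.\ by taking $qd = \sqrt{\lambda}/4$.

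Now delete one vertex from each triangle in $G[S]$. This leaves a triangle-free graph $H$ on $m \ge qn/4$ vertices with average degree $D = O(qd) = O(\sqrt{\lambda})$. Applying the triangle-free bound gives
$$\alpha(G) \ge \alpha(H) \ge c_0\frac{qn}{4}\cdot\frac{\log\sqrt{\lambda}}{O(qd)} = \Omega\!\left(\frac{n}{d}\log\lambda\right),$$
as required. Note that the factor $q$ cancels.

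The main obstacle is bookkeeping. The subgraph's average degree $qd$ must be at least $2$ or so, so that $\log D \asymp \log\lambda$; this holds because $\lambda$ is large. We also have to avoid needing a maximum-degree bound for $H$. Shearer/AKS in its average-degree form handles this; if only the max-degree form is assumed, first delete the vertices of $H$ of degree more than twice the average, losing at most half of them.
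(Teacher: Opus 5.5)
Your argument is correct, and it is the standard way to carry out what the paper only asserts without proof, namely that the lemma ``follows easily from the corresponding statement for triangle-free graphs''. You sample vertices with probability $q \asymp \sqrt{\lambda}/d$, delete one vertex from each surviving triangle, and apply the average-degree form of the Ajtai--Koml\'os--Szemer\'edi/Shearer bound.

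One correction to the bookkeeping: your first choice of $X$ only gives $\Ex X \ge 0$, which is useless. The tuned version with $qd = \sqrt{\lambda}/4$ is the one that works. There $\Ex X \ge 3qn/8$, which forces $m \ge 3qn/8$ and $e(H) \le qd\, m$, so $H$ has average degree at most $\sqrt{\lambda}/2$.
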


We can now prove the upper bound in~\eqref{eq:R333k:general} by induction on $r$. 

\begin{proof}[Proof of the upper bound in Theorem~\ref{thm:R333k:general}]
When $r = 2$, the bound follows from Lemma~\ref{lem:AKSz:fewtriangles}, since if $G$ is a triangle-free graph then $\alpha(G) \ge \Delta(G)$. So let $r \ge 3$, assume that the claimed bound holds for $r - 1$, and let $\chi$ be an $r$-colouring of $E(K_n)$ with no monochromatic triangle in colour $i$ for each $i \in \{1,\ldots,r-1\}$, and no monochromatic copy of $K_k$ in colour $r$. 

Let $G$ be the graph formed by the edges of colours $1,\ldots,r-1$, and observe that 
\begin{equation}\label{eq:DeltaG:bound}
\Delta(G) \le r \cdot R_{r-1}(3,\ldots,3,k) \le r \cdot \frac{(Crk)^{r-1}}{(\log k)^{r-2}} = d,
\end{equation}
by the induction hypothesis. Moreover, for each pair of vertices $u,v \in V(G)$ and each pair of colours $i,j \in [r-1]$ with $i \ne j$, we have
$$|N_i(u) \cap N_j(v)| \le R_{r-2}(3,\ldots,3,k) \le \frac{(Crk)^{r-2}}{(\log k)^{r-3}} = \frac{d \log k}{Cr^2k},$$
where $N_i(u)$ denotes the set of neighbours of $u$ in colour $i$, and $R_1(k) = k$. It follows that the number of triangles in $G$ is at most
$$\sum_{uv \ina E(G)} \sum_{i \eqa 1}^{r-1} \sum_{j \nea i} |N_i(u) \cap N_j(v)| \le r^2 e(G) \cdot \frac{d \log k}{Cr^2k} \le d^2 n \cdot \frac{\log k}{Ck},$$
since every triangle in $G$ uses at least two colours. By Lemma~\ref{lem:AKSz:fewtriangles}, it follows that 
$$n \le \frac{Cd \cdot \alpha(G)}{\log k} \le \frac{(Crk)^r}{(\log k)^{r-1}}$$
since $\alpha(G) < k$. 
\end{proof}

The deduction of the lower bound from Theorem~\ref{thm:acmgraph} is also straightforward, using the approach of Alon and R\"odl~\cite{AR}. 

\begin{proof}[Proof that Theorem~\ref{thm:acmgraph} implies Theorem~\ref{thm:R333k:general}]
The upper bound in~\eqref{eq:R333k:general} was proved above, and the case $r = 2$ of the lower bound was proved by Kim~\cite{Kim}, so it remains to prove the lower bound for all $r \ge 3$. 
To do so, let $n = k^r / (C\log k)^{r-1}$, and observe that 
$n \ge k^2 / \log k$, since $r \ge 3$ and $k \ge C^3$. We may therefore apply Theorem~\ref{thm:acmgraph} to obtain a graph $G$ with $n$~vertices and
$$|\cI_k(G)| \le 2^{O(k)} {pn \choose k},$$
where $\cI_k(G)$ is the set of independent sets of $G$ of size $k$, and $pk = \log k$. 

To define an $r$-colouring of the edges of $K_n$, let $G_1,\ldots,G_{r-1}$ be independent random copies of $G$ with vertex set $V(K_n)$. That is, each graph $G_i$ is formed by randomly permuting the vertices of $G$. Let $G_r = K_n$, and colour the edges of $K_n$ by setting
$$\chi(e) = \min\big\{ i \in [r] : e \in E(G_i) \big\}$$
for each edge $e \in E(K_n)$. Note that there are no monochromatic triangles in any colour $i \in \{ 1,\dots,r-1\}$ since $G$ is triangle-free, and that a copy of $K_k$ in colour $r$ corresponds to a set of $k$ vertices that is independent in each of $G_1,\ldots,G_{r-1}$. Let $X$ denote the number of such sets, and observe that 
$$\Ex\pig[ X \pig] = {n \choose k} \bigg( \frac{|\cI_k(G)|}{\binom nk} \bigg)^{r-1} \le \gap 2^{O(rk)} {pn \choose k}^{r - 1} {n \choose k}^{-(r - 2)},$$
by Theorem~\ref{thm:acmgraph}. Recalling that $pk = \log k$ and $n \le k^r$, it follows that
$$\Ex\pig[ X \pig] \le 2^{O(rk)} \bigg( \frac{p^{r-1}n}{k} \bigg)^k < \bigg( \frac{n \pigl( \hspace{-0.03cm} C \log k \hspace{0.015cm} \pig)^{r-1}}{k^r} \bigg)^k = 1$$
if $C > 0$ is sufficiently large. It follows that $X = 0$ with positive probability, and hence there exists a choice of permutations such that there is no copy of $K_k$ in colour $r$, as required.
\end{proof}

\begin{remark}
Theorem~\ref{thm:acmgraph} can also be used to prove Kim's lower bound on $R(3,k)$. To see this, simply observe that if $G$ has $2^{O(k)}$ independent $k$-sets, then $\alpha(G) = O(k)$, since every subset of an independent set is also independent. 
\end{remark}

\section{The construction}\label{sec:construction}

In this section we will define the random graph $G$ that we will use to prove Theorem~\ref{thm:acmgraph}. As discussed in the introduction, it is exactly the construction of Hefty, Horn, King and Pfender~\cite{HHKP}, except with a different choice of the parameters. It is formed by taking the union of two random blow-ups of the graph $G(m,p)$, and then deleting edges that have a common neighbour in one (or both) of these two graphs. 

\subsection{The construction}

Let $n,k \in \N$ with $n \ge k^2 / \log k$, 
and set\footnote{The extra constant factor in the definition of $p$ will be absorbed into the $O(k)$ term in Theorem~\ref{thm:acmgraph}.} 
\begin{equation}\label{def:parameters}
p = \frac{C \log k}{k} \qquad \text{and} \qquad m = \frac{1}{8p^2},
\end{equation}
where $C$ is a sufficiently large constant. Let $A$ and $B$ be independent copies of the random graph $G(m,p)$. Next, let $A^*$ and $B^*$ be blow-ups of $A$ and $B$, respectively, obtained by replacing each vertex by an independent set of size $n/m$, and each edge by the corresponding complete bipartite graph, and let $G^*$ be the random graph with $n$ vertices and edge set
$$E(G^*) = E(A^*) \cup E(B^*),$$
where the vertices of $A^*$ and $B^*$ are each mapped randomly (and independently) onto $V(G^*)$. Finally, let $G$ be the graph that is obtained from $G^*$ by removing every edge whose end-vertices have a common neighbour in either $A^*$ or $B^*$. 

To be slightly more formal, let us fix two partitions of $V(G)$ into sets of size $n/m$, which we will refer to as \emph{fibres}: 
\begin{equation}\label{def:partitions}
V(G) = F_A^{(1)} \cup \cdots \cup F_A^{(m)} = F_B^{(1)} \cup \cdots \cup F_B^{(m)}
\end{equation}
and let $F_A^{(i)}$ and $F_B^{(i)}$ be the blow-up classes in $A^*$ and $B^*$, respectively. We will also find it convenient to define
\begin{equation}\label{def:fibres}
\cF = \cF_A \cup \cF_B, \quad \text{where} \quad \cF_X = \big\{ F_X^{(1)}, \ldots, F_X^{(m)} \big\} \quad \text{for each } X \in \{A,B\}.
\end{equation}
We will only require these partitions to satisfy a single property, related to the number of sets that intersect few fibres of each. In the next section we define this property, and show that we can choose the partitions so that it is satisfied (see Lemma~\ref{lem:partitions:property}). 

Now, if the random graphs $A$ and $B$ each have vertex set $[m]$, then the edge set of $A^*$ is
$$E(A^*) = \bigcup_{ij \ina E(A)} \big\{ uv : u \in F_A^{(i)}, v \in F_A^{(j)} \big\},$$
and similarly for $B^*$. Next, define the set of \emph{open pairs} with respect to $A$ to be those with no common neighbour in $A^*$,
$$O_A = \bigg\{ uv \in {V(G) \choose 2} : N_A(u) \cap N_A(v) = \emptyset \bigg\},$$
and define $O_B$ similarly. Here, and throughout the paper, we write $N_A(v)$ to denote the neighbourhood of $v$ in the graph $A^*$ when $v \in V(G)$, and the neighbourhood of $v$ in the graph $A$ when $v \in V(A)$ (and similarly for $B$), and trust that this will not cause confusion. We can now define the edge set of $G$ to be
\begin{equation}\label{def:G}
E(G) = E(G^*) \cap O_A \cap O_B.
\end{equation}
Let us record here the simple fact that $G$ is (deterministically) triangle-free.

\begin{lemma} 
$G$ is triangle-free.
\end{lemma}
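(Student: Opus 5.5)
The plan is to argue directly by contradiction, using only the definition~\eqref{def:G} of $E(G)$ and the fact that every edge of $G^*$ lies in $A^*$ or $B^*$. Suppose that $u,v,w$ span a triangle in $G$. Each of the three edges $uv$, $vw$ and $uw$ lies in $E(G^*) = E(A^*) \cup E(B^*)$. Since there are three edges and only two graphs, the pigeonhole principle gives two of them in the same graph. After relabelling, we may assume that $uw, vw \in E(A^*)$, and the argument for $B^*$ is identical.

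Now $w$ is a neighbour of both $u$ and $v$ in $A^*$, so
$$w \in N_A(u) \cap N_A(v) \ne \emptyset.$$
By the definition of open pairs, this means $uv \notin O_A$. But $uv \in E(G) \subseteq O_A \cap O_B$ by~\eqref{def:G}, which is a contradiction.

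One point needs checking: that $u$ and $v$ are distinct vertices of $A^*$, so that the pair $uv$ is a genuine element of ${V(G) \choose 2}$. This holds because they are distinct vertices of the triangle. It also does not matter whether $u$ and $v$ lie in the same fibre of $\cF_A$, since the definition of $O_A$ refers only to common neighbourhoods in $A^*$.

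No step here is a real obstacle. The only thing to confirm is that the two edges sharing a graph can always be taken to share a vertex, and this is automatic because any two edges of a triangle meet in a vertex.
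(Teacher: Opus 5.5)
Your proposal is correct and follows the paper's proof: by pigeonhole two edges of the triangle lie in the same blow-up $A^*$ or $B^*$. Their shared vertex is then a common neighbour of the other two endpoints, so the third edge is not in $O_A \cap O_B$ and cannot be an edge of $G$.
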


\begin{proof}
Any triangle in $G^*$ either has (at least) two edges in $A^*$, or two edges in $B^*$, and therefore the third edge is not in $O_A \cap O_B$, and so is not an edge of $G$.  
\end{proof}

Before continuing to the proof, let us briefly discuss the choice of parameters above. First, we cannot take $p$ much smaller than we do, since if $pk \ll \log k$ then every triangle-free graph with $n$ vertices and density $p$ has $n^{-o(k)} {n \choose k}$ independent $k$-sets,\footnote{We were unable to find a reference for this statement, but it can be read out of several known proofs of Ajtai, Koml\'os and Szemer\'{e}di's bound~\eqref{eq:AKSz:bound} for the independence number of a triangle-free graph.} which is larger than the bound in Theorem~\ref{thm:acmgraph} when $p \le n^{-c}$ for some constant $c > 0$. 

Our choice of $m$ is perhaps more surprising to those who are familiar with the Hefty--Horn--King--Pfender method, since in previous applications (for example, in~\cite{CJMPS,HHKP,KSSW,MSV}) the random graph was taken to be significantly sparser. Here our choice is forced by the proof; we require $m \le p^{-2}$ to control the number of open edges, and $m = \Omega(p^{-2})$ in order to count $k$-sets that are contained in $O(1/p)$ fibres, see Lemma~\ref{lem:counting:e}.

\section{An outline of the proof}\label{outline:sec}

To prove Theorem~\ref{thm:acmgraph}, we will show that, with high probability, the random graph $G$ has at most $2^{O(k)} {pn \choose k}$ independent $k$-sets. The proof of this bound has three main steps: a technical structural lemma, which divides the independent $k$-sets of $G$ into five classes, and two probabilistic lemmas, one of which controls the number of open edges in certain sets of vertices, and one which bounds the probability that a set with many open edges is independent. In this section we will describe these three steps, and how they fit together.  

\subsection{The structural lemma, and sets with many open edges}

In order to describe the five classes given by the structural lemma (see Lemma~\ref{lem:structural:lemma}), we will need a little notation. Recall that the family of fibres $\cF$ was defined in~\eqref{def:fibres}, and for each $S \subset V(G)$, define 
\begin{equation}\label{def:tau}
\tau(S) = \min\big\{ t \in \N : \exists \, F_1,\ldots,F_t \in \cF \text{ such that } S \subset F_1 \cup \cdots \cup F_t \big\}.
\end{equation}
In other words, $\tau(S)$ is the smallest number of fibres that cover $S$. A simple but important observation is that the number of $k$-sets $I \subset V(G)$ such that 
\begin{equation}\label{eq:basic:case:for:counting}
\tau\big( I \setminus N_G(v) \big) = O(pm)
\end{equation}
for some $v \in V(G)$ is at most $2^{O(k)} {pn \choose k}$, assuming that $\Delta(G) = O(pn)$ (see Lemma~\ref{lem:counting:c}). 

The basic idea of the structural lemma is that sets $I$ that do not satisfy~\eqref{eq:basic:case:for:counting} for any vertex $v \in V(G)$ should contain many open pairs with respect to either $A$ or $B$. This is not quite true as stated, but would be useful because of the following lemma, which bounds the probability that a set with many open pairs is independent. We say that a subset of $V(G)$ is an $A$-transversal if it contains at most one vertex of each $A$-fibre $F \in \cF_A$, and similarly for $B$. Throughout the paper, we will write $c$ for a small absolute constant (in particular, not depending on $C$), which we will allow to take different values in different lemmas. 

\begin{lemma}\label{lem:manyopen:probbound}
Let $H$ be a graph with $v(H) \le k$, whose vertices are an $A$-transversal, and whose edges satisfy $E(H) \subset O_B$ and $e(H) \ge k^{3/2}$. Then
$$\Pr_A\big( \gaap V(H) \in \cI(G) \text{ and }\, \Delta(A) \le 2pm \gap \big) \le e^{-c p e(H)}.$$
\end{lemma}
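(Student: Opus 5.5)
We work conditionally on $B$ (so $O_B$ and $E(B^*)$ are fixed), and only $A$ is random. Since $V(H)$ is an $A$-transversal, we identify it with a set $U\subset[m]$ of at most $k$ vertices of $A$, and $E(H)$ with a set of pairs in $U$. Each $uv\in E(H)$ lies in $O_B$. Hence, if $uv\in E(A)$ and $uv\in O_A$, then $uv\in E(G)$. So the event $V(H)\in\cI(G)$ is contained in the event
\[
\mathcal E=\big\{\text{every } uv\in E(H)\cap E(A) \text{ has a common neighbour in } A\big\}.
\]
Heuristically, each edge of $H$ lies in $A$ with probability $p$, and it is closed with probability about $1-(1-p^2)^m\approx p^2m=1/8$. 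This suggests that $\Pr(\mathcal E)\approx(1-\tfrac78p)^{e(H)}$. The difficulty is the dependence between presence and closure. To handle it, we split the edges of $A$ into \emph{outside edges} (between $U$ and $W=[m]\setminus U$) and \emph{inside edges} (within $U$). Closure through a vertex $w\in W$ depends only on the outside edges, whereas presence of $uv\in E(H)$ in $A$ and closure through $w\in U$ depend only on the inside edges. These two families are independent.

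\emph{Step 1 (outside edges).} We reveal the outside edges, on the event $\Delta(A)\le 2pm$. The number of edges of $H$ closed through $W$ is at most $Z=\sum_{w\in W} e_H\big(N_A(w)\cap U\big)$, a sum of independent terms. Its mean is at most $p^2 m\, e(H)=e(H)/8$. With degrees truncated at $2pm$, each term is at most $\min\{e(H),(2pm)^2\}$. We aim to show $\Pr\big(Z\ge e(H)/4\big)\le e^{-cp\,e(H)}$ via a Bernstein/Freedman-type inequality, controlling the variance through the maximum degree of $H$ and the bound $|N_A(w)\cap U|\le 2pm$. I expect this upper-tail estimate to be the main obstacle. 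The hypothesis $e(H)\ge k^{3/2}$ is what should make the tail beat $e^{-cpe(H)}$. If a direct Bernstein bound is too weak, I would first split the vertices $w$ according to the size of $|N_A(w)\cap U|$, and handle the few high-contribution vertices by a separate union bound.

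\emph{Step 2 (inside edges).} We condition on an outcome of Step 1 with $Z<e(H)/4$. Let $F\subset E(H)$ be the set of edges not closed through $W$, so that $|F|\ge 3e(H)/4$. On $\mathcal E$, every edge of $F\cap E(A)$ must lie in a triangle of $A[U]$. By Chernoff, $|F\cap E(A)|\ge p|F|/2$ except with probability $e^{-cp\,e(H)}$. It therefore remains to bound the probability that at least $p|F|/2\ge p k^{3/2}/4$ edges of $A[U]$ lie in triangles. The expected number of triangles in $A[U]$ is at most $k^3p^3=(C\log k)^3$, which is much smaller than $pk^{3/2}$.

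We bound this probability as follows. If many edges lie in triangles, then either there are many edge-disjoint triangles, or some edge or vertex lies in many triangles. In the first case, the Erdős–Tetali (disjoint occurrence) bound $(e\mu/s)^s$ with $s\ge c\,pk^{3/2}$ gives the required $e^{-cp e(H)}$; this uses that $|F|\le\binom k2$, so $p\,e(H)\le pk^2$, together with $\log(s/\mu)=\Omega(\log k)$. In the second case, we use the degree bound in $A[U]$, which is $O(pk)$ with high enough probability by Chernoff. Combining the failure probabilities of Steps 1 and 2 then gives the bound $e^{-cp\,e(H)}$.
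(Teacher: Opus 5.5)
Your architecture is the paper's: reveal the edges between $U$ and $W$ first, keep the edges of $H$ not closed from outside, then deal with triangles inside $A[U]$. But neither probabilistic estimate is established, and the sketch for Step~2 breaks down at the required probability scale.

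\textbf{Step 2.} The target is $e^{-cp\,e(H)}\le e^{-cC\sqrt{k}\log k}$, so you cannot assume anything whose failure probability is merely polynomially small. Chernoff gives $\Delta(A[U])\le Kpk$ only with failure probability polynomially small in $k$, for any fixed $K$. So degrees far above $pk$, up to the hypothesis $\Delta(A)\le 2pm=1/(4p)$, must be allowed.

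Your dichotomy also has no usable second branch. If a maximal edge-disjoint family of triangles has $t$ members, then at most $3t\big(1+2\Delta_2(A[U])\big)$ edges lie in triangles. So when $t$ is small you learn only one of two things. Either some edge has constant codegree, which again has only polynomially small probability. Or the codegrees summed over the edges of the family are large, and you give no way to show this costs $e^{-\Omega(p\,e(H))}$.

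The missing ingredient is a large-deviation bound for paths of length two when degrees can be as large as $1/p$. Lemma~\ref{lem:degree-square} gives $\Pr\big(\sum_v d(v)^2\ge M \text{ and } \Delta\le 1/p\big)\le 2e^{-cpM}$ for $M\ge k^{5/4}$, and this is where $e(H)\ge k^{3/2}$ is really used. The paper pairs it with a decoupling step:
\begin{itemize}
\item put each surviving edge into a random set $F_0$ independently with probability $2^{-5}$;
\item observe that, with probability at least $1/2$ over $F_0$, $\Omega(p\,e(H))$ edges of $A[U]\cap F_0$ are the unique $F_0$-edge of a triangle of $A[U]$;
\item reveal $A[U]-F_0$ first, which has few paths of length two by Lemma~\ref{lem:degree-square};
\item apply Chernoff, since given $A[U]-F_0$ the number of such edges is $\Bin(t,p)$, with $t$ at most the number of paths of length two in $A[U]-F_0$.
\end{itemize}
Your first branch (many edge-disjoint triangles) does work. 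However, it needs $\Omega(p\,e(H)/\log k)$ edge-disjoint triangles rather than $\Omega(pk^{3/2})$, because $p\,e(H)$ may be as large as $pk^2$.

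\textbf{Step 1.} This is also left open, and, as you suspect, Bernstein with worst-case terms is too weak. The bound $(2pm)^2=1/(16p^2)$ on a single term only yields $e^{-cp^2e(H)}$. The paper linearises the sum by a two-round exposure (Lemma~\ref{lem:reveal:outside}):
\begin{itemize}
\item pass to a bipartite subgraph $H[S,T]$ with at least $e(H)/2$ edges;
\item reveal $A[S,W]$;
\item bound the number of its edges closed from outside by $\sum_{v\in T,\,w\in W}a(v,w)\1[vw\in E(A)]$, where $a(v,w)=|N_H(v)\cap N_A(w)\cap S|\le 2pm$.
\end{itemize}
The weighted Chernoff inequality then gives $\exp(-e(H)/(32pm))\le\exp(-p\,e(H)/4)$, using $p^2m\le 1/8$. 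Your one-round sum could perhaps be handled by a direct moment-generating-function computation, using that $|N_A(w)\cap U|$ is typically $O(\log k)$ because $|U|\le k$. That is not what you wrote, though. Also, the hypothesis $e(H)\ge k^{3/2}$ plays no role in this step, contrary to your expectation.
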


Here the notation $\Pr_A$ indicates that the partitions and the graph $B$ are both fixed (and arbitrary), and the probability is only over the randomness of the graph $A$. We will prove Lemma~\ref{lem:manyopen:probbound} in Sections~\ref{sec:inside:edges} and~\ref{sec:outside:edges}. Using it, we can easily bound the expected number of independent $k$-sets that contain an $A$-transversal with at least $\eps k^2$ pairs in $O_B$. Indeed, recalling that $pk^2 = Ck \log k$, it follows from Lemma~\ref{lem:manyopen:probbound} that the expected number of such independent sets is at most ${pn \choose k}$, as long as $C$ is sufficiently large (depending on $c$ and $\eps$). 

Unfortunately, we are not able to prove that every $k$-set either satisfies~\eqref{eq:basic:case:for:counting} for some $v \in V(G)$, or contains such an $A$-transversal, and our structural lemma includes three other cases. Two of these correspond to the existence of a partition $I = U \cup W$ such that $\tau(U) \le (\log k)^{6}$, and moreover either $|U| \ge 2k/3$, or $W$ intersects at most $k/4$ fibres of each partition. In the first of these two cases the counting is easy (see Lemma~\ref{lem:counting:a}), but for the other we will need to choose the partitions $\cF_A$ and $\cF_B$ carefully (see Lemma~\ref{lem:partitions:property}, below). 

The final case is slightly more complicated: we find a vertex $v$ and a (possibly unbalanced) bipartite graph $H$ of $B$-open edges of positive density, such that $\tau\pig( I \setminus N_B(v) \pig)$ is not much larger than the smaller part of $H$. It will turn out (see Lemma~\ref{lem:counting:e}) that, as long as the random graphs $A$ and $B$ both have maximum degree at most $2pm$, we can also bound the expected number of independent $k$-sets with this property using Lemma~\ref{lem:manyopen:probbound}. 

\subsection{Choosing the partitions}

We will count $k$-sets $I \subset V(G)$ of the form $I = U \cup W$, where $\tau(U) \le (\log k)^6$ and $W$ intersects at most $k/4$ fibres of each partition, in Lemma~\ref{lem:counting:b}. In order to do so, we will need to choose the partitions $\cF_A$ and $\cF_B$ so that they satisfy the following simple pseudorandom property. 

\begin{lemma}\label{lem:partitions:property}
We can choose the partitions $\cF_A$ and $\cF_B$ so that, for every $s \in [m]$ and $t \in \N$, there are at most
$$2^{4t} \bigg( \frac{s}{m} \bigg)^{2t - 2s} {n \choose t}$$
$t$-sets $I \subset V(G)$ that intersect at most $s$ fibres of each partition.
\end{lemma}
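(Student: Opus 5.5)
Our plan is to fix $\cF_A$ to be an arbitrary partition of $V(G)$ into $m$ fibres of size $n/m$, and to choose $\cF_B$ uniformly at random among all such partitions, i.e.\ by applying a uniformly random permutation $\pi$ of $V(G)$ to $\cF_A$. We then bound the expected number of bad $t$-sets, and use Markov's inequality together with a union bound over $(s,t)$ to show that some choice of $\pi$ works for every pair simultaneously.

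To carry this out, we first count $t$-sets $I$ that intersect at most $s$ fibres of $\cF_A$. We may assume $t \le sn/m$, since otherwise there are none. Choosing the $s$ fibres gives at most $\binom{m}{s}$ options, and choosing $I$ inside their union gives $\binom{sn/m}{t}$, so the count is at most
\[
\binom ms \binom{sn/m}{t} \le \binom ms \Big(\frac sm\Big)^{t} \binom nt \cdot e^{O(t)}.
\]
The step $\binom{sn/m}{t} \le (s/m)^t \binom nt$ is exact, since $\binom{xN}{t}/\binom Nt \le x^t$ for $x \le 1$. Next we fix such an $I$ and bound the probability that $\pi^{-1}(I)$ is covered by $s$ fibres of $\cF_A$, which equals the probability that $I$ meets at most $s$ fibres of $\cF_B$. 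A uniformly random $t$-set lies in some union of $s$ fibres with probability at most $\binom ms (s/m)^t$. Multiplying the two bounds gives an expectation of at most
\[
\binom ms^2 \Big(\frac sm\Big)^{2t}\binom nt \le \Big(\frac{em}{s}\Big)^{2s}\Big(\frac sm\Big)^{2t}\binom nt = e^{2s}\Big(\frac sm\Big)^{2t-2s}\binom nt .
\]

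The main obstacle is the factor $e^{2s}$, which must be absorbed into $2^{4t}$. Since we may assume $s \le t$ (a $t$-set meets at most $t$ fibres, so the count for $s \ge t$ equals the count for $s = t$, where the target bound is simply $2^{4t}\binom nt$ and holds trivially), we have $e^{2s} \le e^{2t} < 2^{3t}$. This leaves a spare factor $2^{t}$ beyond the expectation bound $2^{3t}(s/m)^{2t-2s}\binom nt$.

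Finally, we apply Markov's inequality for each pair $(s,t)$: the probability that the count exceeds $2^{4t}(s/m)^{2t-2s}\binom nt$ is at most $2^{-t}$. The case $t=0$ is trivial, and only $t \le n$ and $s \le t$ matter. Summing over these pairs gives a total failure probability of at most $\sum_{t\ge 1} t\,2^{-t} = 2$, which is too large. We fix this by reserving more slack: the $e^{O(t)}$ loss above is really just $e^{2s} \le e^{2t} \approx 2^{2.89t}$, so Markov's inequality yields failure probability $2^{-1.1t}$ per pair. Then $\sum_t t\,2^{-1.1t}$ is still about $1.7$, so we also handle small $t$ directly. For $t \le 10$ (say) we instead use $e^{2s}\binom ms^{-1}$-type refinements, or simply note that for $s < t$ and bounded $t$ the crude estimate $\binom ms^2 \le m^{2s}/(s!)^2$ gains a factor $(s!)^{-2}$ that makes the failure probability tiny. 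For $s = t$ the bound is trivial. With this adjustment the total failure probability is less than $1$, so a good $\pi$ exists.
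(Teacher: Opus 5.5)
Your proof is correct and follows essentially the paper's route. In both, you bound the expected number of bad $t$-sets under random partitions by $\binom{m}{s}^2 (s/m)^{2t}\binom{n}{t} \le e^{2s}(s/m)^{2t-2s}\binom{n}{t}$, absorb $e^{2s}$ into $2^{3t}$ using $s<t$ and $e^2<8$, and finish with Markov and a sum over $(s,t)$; the only difference is that you fix $\cF_A$ and randomize only $\cF_B$ via a permutation, whereas the paper randomizes both. Your last paragraph is more roundabout than necessary: since $s \ge t$ is trivial, only $1 \le s \le t-1$ contributes, and keeping the per-pair bound $e^{2s}2^{-4t}$ (rather than weakening it to $2^{-t}$ or $2^{-1.1t}$) gives total failure probability at most $\sum_{t\ge 2}2^{-4t}\sum_{s<t}e^{2s} < 0.1$, so the small-$t$ patch is not needed.
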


In the proof of the lemma, and also occasionally later in the paper, it will be convenient to write $\pi_A$ and $\pi_B$ for the natural projections of $V(G)$ onto $V(A)$ and $V(B)$, respectively. 

\begin{proof}[Proof of Lemma~\ref{lem:partitions:property}]
We will choose the partitions $\cF_A$ and $\cF_B$ independently and uniformly at random, and show that the claimed property holds with positive probability. To do so, fix $s$-sets $S \subset V(A)$ and $T \subset V(B)$, and let $X$ be the number of $t$-sets $I \subset V(G)$ such that $\pi_A(I) \subset S$ and $\pi_B(I) \subset T$. Note that the claimed bound holds trivially if $s \ge t$, so we may assume that $s < t$, and observe that
$$\Ex\pig[ X \pig] \le {n \choose t} \bigg( \frac{s}{m} \bigg)^{2t},$$
and hence, by Markov's inequality,
$$\Pr\bigg( X \ge 2^{4t} \bigg( \frac{s}{m} \bigg)^{2t - 2s} {n \choose t} \bigg) \le 2^{-t} \bigg( \frac{s}{em} \bigg)^{2s}.$$
Therefore, taking a union bound over the ${m \choose s}^2$ choices of $S$ and $T$, we deduce that with probability at most
$${m \choose s}^2 2^{-t} \bigg( \frac{s}{em} \bigg)^{2s} < 2^{-t}$$
there are more than the claimed number of such $t$-sets. Summing over $s$ and $t$, we deduce that with positive probability the random partitions have the claimed property for every $s \in [m]$ and $t \in \N$, as required.
\end{proof}

Let us fix partitions $\cF_A$ and $\cF_B$ such that the conclusion of Lemma~\ref{lem:partitions:property} holds. We will not require any other properties of the partitions. 

\subsection{Finding many open pairs}

We have not yet explained how we find an $A$-transversal in $I$ that spans many open pairs. In order to state the lemma that we will use to do so, we first need to define the \emph{spreadness} $\lambda(S)$ of a set $S \subset V(G)$, 
$$\lambda(S) = \max\bigg\{ \frac{|S \cap F|}{|S|} : F \in \cF \bigg\}.$$
In words, $\lambda(S)$ is the largest proportion of $S$ that lies in a single fibre.

Given a set $W \subset V(G)$, we write $o_A(W)$ for the number of $A$-open edges in $W$. Recall that when $v \in V(G)$, we write $N_A(v)$ to denote the neighbourhood of $v$ in the graph $A^*$. We will use the following lemma to find open edges in subsets of $k$-sets that are not effectively covered by a neighbourhood and few fibres. Fix $\eps = 2^{-12}$ for the rest of the paper. 

\begin{lemma}\label{lem:bipartite:lemma}
With high probability, for every pair of disjoint sets $S,T \subset V(G)$ such that
\begin{equation}\label{eq:both:nbhds:big}
\lambda(S) + \lambda(T) \le \frac{1}{(\log m)^2} \qquad \text{and} \qquad \min\bigg\{ \frac{|N_A(v) \cap S|}{|S|}, \frac{|N_A(v) \cap T|}{|T|} \bigg\} \le \eps
\end{equation}
for every $v \in V(G)$, we have
$$o_A(S \cup T) \ge \frac{|S||T|}{32}.$$
\end{lemma}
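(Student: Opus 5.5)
The plan is to reduce everything to a weighted statement about the random graph $A = G(m,p)$ and then combine a first-moment computation with a union bound over discretised weights.

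\textbf{Reduction to a weighted problem on $A$.} For $S,T$ as in the statement, define weights on $V(A)$ by
$$x_i = \frac{|S \cap F_A^{(i)}|}{|S|} \qquad \text{and} \qquad y_i = \frac{|T \cap F_A^{(i)}|}{|T|}.$$
The spreadness condition gives $\max_i x_i + \max_i y_i \le (\log m)^{-2}$. A pair $uv$ with $u \in S$, $v \in T$ is $A$-closed exactly when $\pi_A(u)$ and $\pi_A(v)$ have a common neighbour in $A$ (pairs within one fibre count as closed). Hence the proportion of closed pairs in $S \times T$ is at most
$$\sum_i x_i y_i + \sum_{w \in V(A)} x(N_A(w))\, y(N_A(w)).$$
The first term is at most $(\log m)^{-2}$. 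The second condition in~\eqref{eq:both:nbhds:big} says that every $w \in V(A)$ satisfies $x(N_A(w)) \le \eps$ or $y(N_A(w)) \le \eps$. It therefore suffices to show that, with high probability over $A$, every such pair of weight vectors satisfies
$$\sum_{w} x(N_A(w))\, y(N_A(w)) \le \tfrac{1}{2}.$$
The other half of the pairs are then open, which gives $o_A(S \cup T) \ge |S||T|/32$ with room to spare.

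\textbf{Discretisation and a first-moment bound.} I would split $x$ and $y$ into $O(\log m)$ dyadic level sets, so that it is enough to control closed pairs between level sets $X, Y \subset V(A)$ of sizes $a, b$, weighted by the dyadic levels. Spreadness forces each relevant level set carrying non-negligible weight to have size at least about $(\log m)^2$. For fixed $X$ and $Y$, the expected number of pairs in $X \times Y$ with a common neighbour is about $m p^2 ab = ab/8$, because $m = 1/(8p^2)$. This is comfortably below $ab/2$. To keep the neighbourhoods independent, I would first handle the vertices $w \notin X \cup Y$, for which $d_X(w)$ and $d_Y(w)$ are independent binomials across different $w$. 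The vertices of $X \cup Y$ contribute at most $(a+b)\cdot \Delta(A)^2$-type terms, which I would absorb using $\Delta(A) \le 2pm$ together with the spreadness.

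\textbf{Concentration, using the neighbourhood condition.} The sum $\sum_w d_X(w) d_Y(w)$ is a sum of independent terms. The hypothesis caps each term at $\min\{\eps a \cdot d_Y(w),\, \eps b \cdot d_X(w)\}$. This is exactly what prevents a few high-degree vertices $w$ (whose neighbourhoods swallow a constant fraction of $X$ and of $Y$) from creating a dense closed bipartite graph. Concretely, I would impose the cap by replacing each term by its truncation $\min\{d_X d_Y,\ \eps a d_Y + \eps b d_X\}$, and then apply a Bernstein/Chernoff bound. The aim is a tail of the form $\exp\big(-c(a+b)\log m\big)$ for exceeding $ab/4$. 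This beats the union bound over the $\binom{m}{a}\binom{m}{b}$ choices of $(X,Y)$ and the polylogarithmically many level combinations.

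\textbf{Main obstacle.} The hard step is this final large-deviation estimate. The tail must be strong enough, roughly $m^{-C(a+b)}$, even when $a$ and $b$ are very unbalanced or as small as $(\log m)^2$. In that regime, $p|X|$ may be small, so the $d_X(w)$ are far from concentrated. I would first try to handle small sets by bounding, via a direct union bound on $A$, the number of vertices $w$ with $d_X(w) \ge 2$. If that fails, I would instead prove a deterministic pseudorandom property of $A$: for all $X$ and $Y$, few vertices have large degree into both. I would then derive the weighted bound from it using the $\eps$-cap.
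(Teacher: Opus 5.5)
There is a genuine gap. The large-deviation estimate you leave for the end is essentially the whole lemma, and the tools you propose for it do not work.

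First, your reduction target is false as written. Since $\sum_w x(N_A(w))\,y(N_A(w))=\sum_{i,j}x_iy_j\,|N_A(i)\cap N_A(j)|$, this sum recounts same-fibre pairs with multiplicity $\deg_A(i)\approx pm$. Spreadness only bounds this diagonal part by $2pm(\log m)^{-2}$, which is of order $\sqrt m/(\log m)^2$. The bound is attained, e.g.\ for $x=y$ uniform on a typical set of $2(\log m)^2$ vertices of $A$; no vertex of $A$ has three neighbours in such a set, so the $\eps$-condition holds. You must exclude $i=j$. Overlapping level sets of $x$ and $y$ also break the independence of $d_X(w)$ and $d_Y(w)$ that you use.

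Second, the concentration step fails on three counts.
\begin{itemize}
\item[(i)] After truncation at $\eps a d_Y+\eps b d_X$, a single term can still be of order $\eps ab$. So Chernoff or Bernstein for exceeding $ab/4$ yields only $e^{-O(1/\eps)}$, nowhere near $m^{-C(a+b)}$. The true tail comes from large terms being rare, which needs a different argument.
\item[(ii)] The $\eps$-condition concerns the full weight vectors. If one of your dyadic levels has total weight at most $\eps$, the hypothesis allows a vertex adjacent to all of $X\cup Y$, so the per-level truncation is unjustified. Without it the per-level bound is simply false: take $X,Y\subset N_A(w)$.
\item[(iii)] Your bound $(a+b)\Delta(A)^2\approx(a+b)m/2$ for the contribution of vertices of $X\cup Y$ is at least $ab$ for all $a,b\le m$, so it cannot be absorbed.
\end{itemize}

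The missing idea is a reduction to sets of size $\Theta(\log m)$ in $V(A)$. There $p|S|\ll 1$, union bounds cost only $m^{O(\log m)}$, and closers have a simple structure.

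The paper samples random $X\subset S$, $Y\subset T$ with $|X|=|Y|=C\log m$. Spreadness gives $O(1)$ expected collisions under $\pi_A$, and Chernoff plus a union bound over $v$ transfers the neighbourhood condition (with $2\eps$). Since $\Ex[o_A(X,Y)]=\frac{|X||Y|}{|S||T|}\,o_A(S,T)$, it suffices that $o_A(X,Y)\ge|X||Y|/16$ with probability at least $1/2$.

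For these small sets (Lemma~\ref{lem:bipartite:A}) the closers split into two types.
\begin{itemize}
\item Unique neighbours $Q(u)$ of $u\in S$. Expose $A[S,W]$ first; then the events $N_A(v)\cap Q(u)=\emptyset$ are independent over $(u,v)\in S\times T$, each of probability at least $e^{-1/2}$, so Chernoff gives $e^{-c|S||T|}$.
\item Vertices with at least three neighbours in $S\cup T$, which are genuinely rare at this scale. For these, the $\eps$-condition turns $\sum d_Sd_T\ge|S||T|/4$ into the linear bound $\sum(d_S+d_T)\ge|S|/16\eps$. This forces a set $Z$ of at most $t/3$ vertices with $e(A[S\cup T\cup Z])\ge t=|S|/32\eps$, an event of probability $m^{-t/8}$.
\end{itemize}

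Your fallback of union-bounding vertices with $d_X(w)\ge 2$ points in this direction. However, such vertices are not rare (there are about $|X|^2/16$ of them). Moreover, the split only works once the sets are small and roughly balanced, which is exactly what the sampling step provides.
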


We will prove Lemma~\ref{lem:bipartite:lemma} in Section~\ref{sec:open:pairs}, and use it in Section~\ref{sec:structural} to prove Lemma~\ref{lem:structural:lemma}. 
To be more precise, we have chosen to write the structural lemma as a deterministic statement, and we therefore need to define an event $\cB$ that says that the conclusion of Lemma~\ref{lem:bipartite:lemma} holds for the random graphs $A$ and $B$. We will also need another event $\cD$, which provides some simple upper bounds on the degree and co-degrees of the vertices of $A$ and $B$. 

\begin{definition}\label{def:events}
Let $\cA = \cB \cap \cD$, where $\cB$ and $\cD$ are defined as follows:
\begin{itemize}
\item $\cB$ is the event that 
$$o_A(S \cup T) \ge \frac{|S||T|}{32}$$
for every pair $S,T \subset V(G)$ of disjoint sets such that~\eqref{eq:both:nbhds:big} holds for every $v \in V(G)$, and that the same also holds with $A$ replaced by $B$.\smallskip
\item $\cD$ is the event that 
$$\max\big\{ \Delta(A), \Delta(B) \big\} \le 2pm \qquad \text{and} \qquad \max\big\{ \Delta_2(A), \Delta_2(B) \big\} \le \log m,$$
where $\Delta_2(H) = \max\big\{ |N_H(u) \cap N_H(v)| : u,v \in V(H), \gap u \ne v \big\}$. 
\end{itemize}
\end{definition}

Observe that the event $\cB$ holds with high probability, by Lemma~\ref{lem:bipartite:lemma}, and that $\cD$ also holds with high probability, since $A$ and $B$ are both copies of $G(m,p)$. 

\begin{lemma}\label{lem:D:holds}
The event $\cD$ holds with high probability. 
\end{lemma}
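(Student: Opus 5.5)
We need to show that, with high probability, $\Delta(A) \le 2pm$ and $\Delta_2(A) \le \log m$, and the same for $B$. Since $A$ and $B$ are independent copies of $G(m,p)$, it suffices to prove each bound for a single copy of $G(m,p)$ with failure probability $o(1)$, and then take a union bound over the four events.

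\emph{Degrees.} For each vertex $v \in V(A)$ the degree $d_A(v)$ has distribution $\Bin(m-1,p)$, with mean at most $pm$. By~\eqref{def:parameters} we have $pm = 1/(8p) = k/(8C\log k)$, which tends to infinity. Chernoff's inequality gives
$$\Pr\big( d_A(v) > 2pm \big) \le e^{-pm/3}.$$
A union bound over the $m$ vertices bounds the failure probability by $m e^{-pm/3}$. Since $m = k^2/(8C^2(\log k)^2) \le k^2$ and $pm = \Omega(k/\log k)$, this quantity is $o(1)$.

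\emph{Codegrees.} For distinct $u,v$, the codegree $|N_A(u)\cap N_A(v)|$ has distribution $\Bin(m-2,p^2)$. Its mean is at most $p^2 m = 1/8$, so it is a constant and not growing. We therefore use the crude bound
$$\Pr\big(\Bin(m,p^2) \ge \ell\big) \le \binom{m}{\ell} p^{2\ell} \le \Big(\frac{e p^2 m}{\ell}\Big)^{\ell} \le \Big(\frac{e}{8\ell}\Big)^{\ell},$$
with $\ell = \lceil \log m \rceil$. The right-hand side is $\ell^{-\ell}\cdot O(1)^{\ell} = m^{-\Omega(\log\log m)}$. This beats the union bound over at most $m^2$ pairs, so $\Delta_2(A) \le \log m$ with probability $1 - m^{2-\omega(1)} = 1-o(1)$.

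Both estimates are routine. The only point needing care is to use the tail bound $\binom{m}{\ell}q^\ell$ for the codegree, because its mean is $O(1)$ rather than large, and to check that $m \to \infty$ as $k \to \infty$, which holds since $m = \Theta(k^2/(\log k)^2)$. Applying the same argument to $B$ and taking a union bound over the four events completes the proof.
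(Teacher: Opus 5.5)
Your proposal is correct and follows essentially the same route as the paper: Chernoff's inequality plus a union bound over the $m$ vertices for the degrees, and for the codegrees a first-moment tail bound $\binom{m}{\ell}p^{2\ell}$, which the paper writes as $(p^2m)^t/t! \le 1/t!$ using $p^2 m \le 1$. With $t = \log m$ this beats the union bound over the $m^2$ pairs, exactly as in your estimate $(e/8\ell)^{\ell} = m^{-\Omega(\log\log m)}$.
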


\begin{proof}
The bound on the maximum degree follows by Chernoff's inequality and the union bound, and the bound on the maximum co-degree holds because $p^2 m \le 1$, which implies that $\Pr\pig( |N_A(u) \cap N_A(v)| \ge t \pig) \le 1/t!$ for every $u,v \in V(A)$ with $u \ne v$.
\end{proof}

\section{Revealing the edges inside a set of size $k$}\label{sec:inside:edges}

In the next two sections we will prove Lemma~\ref{lem:manyopen:probbound}, our bound on the probability that a $k$-set with many open edges with respect to $B$ is independent in $G$. To motivate the~proof, recall from~\eqref{def:G} that if $V(H)$ is independent, then each edge $e \in E(H \cap A^*)$ must be closed with respect to $A$ (since it is in $O_B$), and therefore must have a common neighbour in~$A^*$. Moreover, since $V(H)$ is an $A$-transversal, we may equivalently work in the graph $A$, and bound the probability that each pair in the projection $U \subset V(A)$ that corresponds to an edge of $H$ is either not chosen in $A$, or has a common neighbour in $A$. 

To do so, we will consider separately the common neighbours inside and outside $U$. More precisely, we will first reveal all of the edges of $A$ except those inside $U$, and bound the probability that at least half of the edges of $H$ have a common neighbour outside $U$ (see Lemma~\ref{lem:reveal:outside}). We will then reveal the edges of $A[U]$, and apply the following lemma to the graph of remaining edges of $H$ (those with no common neighbour outside $U$). 

\begin{lemma}\label{lem:good_to_probability}
Let $H$ be a graph with $k$ vertices and $e(H) \ge k^{4/3}$, and let $R \sim G(k,p)$. Then 
$$\Pr\Big( R \cap H \subset K_3(R) \text{ and } \, \Delta(R) \le 1/p \Big) \le e^{-cp e(H)},$$
where $K_3(R)$ denotes the set of edges of $R$ that are contained in a triangle in $R$. 
\end{lemma}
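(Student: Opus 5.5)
We bound the probability that every edge of $H$ chosen in $R$ lies in a triangle of $R$, while $\Delta(R)\le 1/p$. The approach is to expose $R$ edge by edge along the edges of $H$ and pay a factor for each edge of $H$ that is present in $R$ without a witness.

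Write $X = |E(R)\cap E(H)|$. Since $X \sim \Bin(e(H),p)$, Chernoff gives $\Pr(X \le pe(H)/2) \le e^{-cpe(H)}$. We may therefore assume $X \ge pe(H)/2$. Then at least $pe(H)/2$ edges of $H$ must each lie in a triangle of $R$.

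The key step is to show that this forces many more edges of $R$ than expected, or a very structured configuration. Let $T$ be the set of edges $uv\in E(R)\cap E(H)$, each with a chosen witness $w$ such that $uw,vw\in E(R)$. Each witness edge $uw$ is used by at most $\Delta(R)\le 1/p$ edges $uv$. Hence the witnesses span a set $W\subset E(R)$ with $|W| \ge p|T|/2 \ge p^2e(H)/4$. This by itself is not rare, so counting witness edges alone is too weak.

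Instead we do a first-moment count over witness structures, grouped by vertex. For each vertex $u$, consider the star of $H$-edges at $u$ that are present in $R$. Each such edge $uv$ needs some $w\in N_R(u)$ with $vw\in E(R)$, that is, $v\in N_R(N_R(u))$. Condition on $N_R(u)$, which has size $d\le 1/p$. Then each $v$ lies in the second neighbourhood with probability at most $1-(1-p)^d\le pd\le 1$. This is not small enough, so we use the edges of $H$ more cleverly.

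The cleaner route is a union bound over sets $F\subset E(H)$ with $|F|=f\ge pe(H)/2$ together with witness assignments. For a given $F$ and witness map $\phi:F\to[k]$, the probability is at most $p^{|F\cup \phi\text{-edges}|}$. The number of distinct edges is at least $f + (\text{number of distinct witness edges})$. Write $g$ for the number of distinct witness edges. The number of choices is $\binom{e(H)}{f}$ times the number of ways to choose the witness edge set, which is at most $\binom{k^2}{g}$, times the number of ways to assign witnesses compatibly. Given the witness edge set, assigning $uv\mapsto w$ requires $uw,vw$ both in it; the degree bound gives at most $1/p$ choices each.

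This leads to the bound
$$\sum_{f,g}\binom{e(H)}{f}p^f\binom{k^2}{g}p^g\,\min\{k,1/p\}^{f}.$$
This still loses a factor $(1/p)^f$ against $p^f$, so we instead charge per witness edge. Each witness edge $uw$ can serve at most $\deg_R(w)\le 1/p$ edges $uv$, so $g\ge pf$. Triangles are rare when $e(H)$ is large relative to $p^{-2}$... and this is where the hypothesis $e(H)\ge k^{4/3}$ enters.

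The main obstacle is balancing these terms. We would split according to whether $g\ge \delta f$, in which case the extra $p^g$ factor dominates the entropy $\binom{k^2}{g}$ once $pf\gtrsim g\log(k^2 p/g)$, or $g<\delta f$. In the second case the witness edges form a graph $W$ with few edges, and $F$ is contained in the set of pairs with a common $W$-neighbour. By a Kruskal–Katona/Cauchy–Schwarz bound, the number of such pairs is at most $O(\min\{g^{3/2}, g/p\})$ using $\Delta\le 1/p$. This forces $f \le g/p$. Combined with $g\ge pf$, the remaining case becomes tight only when $W$ is nearly a union of stars of degree $1/p$. Here $k^{4/3}\le e(H)$ ensures $f\gg g^{3/2}$ fails unless $g\ge f^{2/3}\ge (pk^{4/3})^{2/3}$, giving enough $p^g$ decay.

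I would first try to make this dichotomy rigorous. I expect the careful entropy estimate in the regime $g\approx pf$ to be the hardest step.
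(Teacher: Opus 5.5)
Your proposal does not yet contain a proof. The decisive estimate is deferred (``I would first try to make this dichotomy rigorous''), and the steps you sketch for it do not hold up.

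First, in your union bound over witness sets $W$, the factor $p^g$ never beats the entropy. Indeed $\binom{\binom k2}{g}p^g \ge \big(p\binom k2/g\big)^g \ge 1$ for every $g \le p\binom k2$, and $R$ typically has about $p\binom k2$ edges. Any decay must therefore come from $F$. Given $W$, that contribution is at most $\binom{|P_2(W)|}{f}p^f \le (ep|P_2(W)|/f)^f$, where $P_2(W)$ is the set of pairs with a common neighbour in $W$. With your bound $|P_2(W)| \le g/p$ this is $(eg/f)^f$, which is at least $1$ as soon as $g \ge f/e$. So your case $g \ge \delta f$ has no mechanism at all, and it contains the generic situation where different edges of $R \cap H$ have different witness cherries.

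Second, the claim that $F \cup W$ has at least $f+g$ distinct edges is false. Edges of $R \cap H$ can themselves serve as witness edges (take $H = K_k$), so $\{F \subset R\}$ and $\{W \subset R\}$ are not independent.

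Third, the bound $|P_2(W)| = O(g^{3/2})$ is false. A star with $g \le 1/p$ edges has $\binom g2 \gg g^{3/2}$ cherries; Kruskal--Katona bounds triangles, not cherries. So the step where you invoke $e(H) \ge k^{4/3}$ does not stand. Note also that ``$f \le g/p$'' and ``$g \ge pf$'' are the same inequality, so combining them gives nothing.

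The missing idea is to control the number of \emph{closable} pairs globally, instead of union-bounding over witnesses. The number of pairs with a common neighbour in a graph $R'$ is at most $\sum_v d_{R'}(v)^2$. This is typically $\Theta(p^2k^3) = \Theta(k(\log k)^2) \ll k^{4/3} \le e(H)$, and this is where the hypothesis on $e(H)$ is used. The paper's Lemma~\ref{lem:degree-square} shows that $\sum_v d_{R'}(v)^2 \ge M$ together with $\Delta(R') \le 1/p$ has probability at most $2e^{-cpM}$ for $M \ge k^{5/4}$. The argument is that a large sum forces many vertices of degree at least $d$ for some $d \le 1/p$, and a union bound handles this.

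To decouple the $H$-edges from their witnesses, the paper samples $F \subset H$, keeping each edge with probability $\alpha = 2^{-5}$. A Chernoff step first gives $e(R \cap H) \ge p\,e(H)/2$. If every edge of $R \cap H$ lies in a triangle then, with probability at least $1/2$ over $F$, at least $\alpha\, e(R \cap H)/4$ edges of $R \cap F$ lie in a triangle whose other two edges are in $R - F$. This follows from Markov's inequality applied to the edges whose chosen triangle has two or more edges in $F$.

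Now reveal $R' = R - F$ first. With probability $1 - e^{-cp\,e(H)}$, at most $2^{-9}e(H)$ edges of $F$ have a common neighbour in $R'$. The number of these present in $R$ is distributed as $\Bin(t,p)$ with $t \le 2^{-9}e(H)$, independently of $R'$. By Chernoff, it is at least $2^{-8}p\,e(H)$ only with probability $e^{-cp\,e(H)}$.
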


The idea of the proof is similar to that of~\cite[Lemma~3.4]{MSV}, but instead of using an auxiliary random permutation to create independence between our choices, we will use a random subgraph $F \subset H$, and reveal the edges of $R$ in two stages:  the edges of $R' = R - F$ first, and then the edges of $R \cap F$. The key observation is that if $R \cap H \subset K_3(R)$, then with constant probability (in $F$) we have many triangles in $R$ with exactly one edge in $F$. To bound the probability that this happens, we will use the following lemma to bound the number of paths of length two in $R'$, and then Chernoff's inequality to bound the probability that more triangles are completed than expected when we reveal $R \cap F$. 

\begin{lemma}\label{lem:degree-square}
Let $R$ be a random graph with $k$ vertices whose edges are chosen independently, each with probability at most $p$. Then
\begin{equation}\label{eq:degreesquare:prob}
\Pr\bigg( \sum_{v \ina V(R)} d_R(v)^2 \ge M \text{ and }\, \Delta(R) \le 1/p \bigg) \le 2 \cdot e^{-cpM}
\end{equation}
for every $M \ge k^{5/4}$.
\end{lemma}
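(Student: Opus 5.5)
The plan is to decompose $\sum_v d_R(v)^2$ dyadically by degree and bound each dyadic level by a union bound over sets of high-degree vertices. The hypothesis $\Delta(R) \le 1/p$ is what converts a tail bound of the form $e^{-c\cdot(\text{number of edges})}$ into one of the form $e^{-cp\sum d^2}$. Indeed, for a vertex of degree $D \le 1/p$ we have $pD^2 \le D$, so a bound $\exp(-c\,sD)$ for "$s$ vertices of degree at least $D$" is at least as strong as $\exp(-cp\,sD^2)$.

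First I remove the low degrees. Set $D_0 = \max\{k^{1/8}/2,\, 8e\,pk\}$. Vertices of degree less than $D_0$ contribute at most $kD_0^2$. When $D_0 = k^{1/8}/2$ this is at most $k^{5/4}/4 \le M/4$, and this is exactly where the hypothesis $M \ge k^{5/4}$ enters. So on the event in~\eqref{eq:degreesquare:prob}, the vertices of degree at least $D_0$ contribute at least $3M/4$.

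Next, for $j_0 \le j \le J$ with $2^{j_0} \approx D_0$ and $2^J \approx 1/p$, let $s_j$ be the number of vertices with degree in $[2^j, 2^{j+1})$. Then $\sum_j s_j 4^{j+1} \ge 3M/4$. I weight the levels from the top: since $\sum_{j}(J-j+1)^{-2} < 2$, some level $j$ must satisfy $s_j 4^j \ge c\,M/(J-j+1)^2$.

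Now fix such a $j$, write $D = 2^j$ and $s = s_j$, and fix a set $S$ of $s$ vertices. If every vertex of $S$ has degree at least $D$, then at least $sD/2$ edges of $R$ meet $S$. There are at most $sk$ such potential edges, each present independently with probability at most $p$, so the probability is at most
$$\binom{sk}{sD/2}p^{sD/2} \le \big(2epk/D\big)^{sD/2} \le 4^{-sD/2},$$
using $D \ge 8epk$. A union bound over the $\binom{k}{s} \le k^s$ choices of $S$ costs a factor $e^{s\log k}$. This is absorbed because $D \ge k^{1/8}/2 \gg \log k$, giving a bound $e^{-c\,sD}$. Finally,
$$sD = s4^j/2^j \ge p\,s4^j\cdot 2^{J-j} \ge c\,pM\,2^{J-j}/(J-j+1)^2 \ge c'\,pM.$$
Summing over $j$, the terms $e^{-c'pM 2^{J-j}/(J-j+1)^2}$ form a geometric-type series, so the total is at most $2e^{-cpM}$ after adjusting $c$ (and using $pM \ge 1$, say, or else the bound is trivial).

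The step I expect to be the main obstacle is the regime where $pk$ is not small compared to $k^{1/8}$, so that $D_0 = 8epk$. There the low-degree contribution $kD_0^2$ is no longer at most $M/4$. I would first observe that the event forces $k/p^2 \ge M \ge k^{5/4}$, hence $p \le k^{-1/8}$. I would then try to handle this range by also union-bounding over vertices of degree between $k^{1/8}$ and $8epk$, using Chernoff's inequality in its small-deviation form. In the application $pk = C\log k$, so this regime does not arise and the argument above suffices. If the general case resists, I would state the lemma under the additional harmless assumption $pk \le k^{1/16}$.
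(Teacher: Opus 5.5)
Your proof is correct and is essentially the paper's argument: a dyadic decomposition of the degrees, a pigeonhole step producing many vertices of degree at least $D$ for some $D \le 1/p$, and the union bound $\binom{k}{s}\binom{sk}{sD/2}p^{sD/2}$. The only differences are bookkeeping: the paper cuts off low degrees at $(M/8k\log k)^{1/2}$ and spreads the mass evenly over the $O(\log k)$ levels, recovering the lost $\log k$ from $(2epk/d)^{d/2} \le k^{-\Omega(d)}$, whereas you cut at $k^{1/8}/2$ and use the weights $(J-j+1)^{-2}$.

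The paper also uses $pk = C\log k$ here, and this restriction is necessary rather than merely convenient. For $p = 2k^{-7/8}$ and $M = 2k^{5/4}$, the event in~\eqref{eq:degreesquare:prob} holds with high probability, so the regime you flag as the main obstacle is one where the statement is false.
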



\begin{proof}
We first claim that, if the event on the left-hand side of~\eqref{eq:degreesquare:prob} holds, then there exist $d,t \in \N$, with
\begin{equation}\label{eq:dt:properties}
\bigg( \frac{M}{8k\log k} \bigg)^{1/2} \le d \le \frac{1}{p} \qquad \text{and} \qquad t \ge \frac{M}{8d^2\log k},
\end{equation}
such that there are at least $t$ vertices in $R$ of degree at least $d$. To see this, partition the vertices of $R$ into sets
$$U_i = \big\{ v \in V(R) : 2^i \le d_R(v) < 2^{i+1} \big\},$$
and observe that if no such pair $(d,t)$ exists, then 
$$|U_i| \le \frac{M}{2^{2i+3}\log k}$$
for every $i \ge 0$ such that $2^i \le 1/p$. Indeed, if $2^{2i} \ge M / 8k\log k$ 
then this follows from~\eqref{eq:dt:properties} with $d = 2^i$, and otherwise it follows from the trivial bound $|U_i| \le k$. 
It follows that either $\Delta(R) > 1/p$, or 
$$\sum_{v \ina V(R)} d_R(v)^2 \le \sum_{i \eqa 0}^{\log_2(1/p)} \sum_{v \ina U_i} 2^{2i+2} \le \sum_{i \eqa 0}^{\log_2(1/p)} \frac{M}{2\log k} < M.$$
Since each edge is included in $R$ independently with probability at most $p$, 
the probability that there are $t$ vertices of degree at least $d$ is at most
\begin{equation}\label{eq:degreesquare:calc}
{k \choose t} {tk \choose td/2} p^{td/2} \le \bigg( k \cdot \bigg( \frac{2epk}{d} \bigg)^{d/2} \bigg)^t \le \exp\bigg( - \frac{dt\log k}{2^5} \bigg) \le \exp\left(-\frac{M}{2^8 d}\right),
\end{equation}
where the second inequality holds because $pk = C \log k$ and $d \gg k^{1/8} / \log k \gg k^{1/16} \log k$, 
by~\eqref{eq:dt:properties}, and our lower bound $M \ge k^{5/4}$, 
and the third follows from~\eqref{eq:dt:properties}. Summing over the choices of $d$ and $t$ satisfying~\eqref{eq:dt:properties}, and recalling that $d \le 1/p$, 
the claimed bound follows. 
\end{proof}

We are now ready to prove Lemma~\ref{lem:good_to_probability}.

\begin{proof}[Proof of Lemma~\ref{lem:good_to_probability}]
Let $M = e(H)$, and note that $e(R \cap H) \sim \Bin(M,p)$, so 
\begin{equation}\label{eq:chernoff:RcapH}
\Pr\big( e(R \cap H) \le pM/2 \big) \le e^{-pM/8},
\end{equation}
by Chernoff's inequality. It will therefore suffice to bound the probability of the event
$$\cR = \big\{ R \cap H \subset K_3(R) \big\} \cap \big\{ \Delta(R) \le 1/p \big\} \cap \big\{ e(R \cap H) \ge pM/2 \big\}.$$
In order to do so, we will choose a random subgraph $F \subset H$, where each edge of $H$ is included in $F$ independently at random with probability $\alpha = 2^{-5}$, and define 
$$Y = \big| \big\{ xy \in E(R \cap F) : \exists \, z \in V(R) \text{ such that } xz,yz \in E(R - F) \big\} \big|$$
to be the number of edges of $R \cap F$ that are the unique edge of $F$ in some triangle in $R$. 

The main step in the proof is the following claim. 

\begin{claim}\label{claim:Ubound:W}
\begin{equation}\label{eq:Ubound:W}
\Pr(\gap\cR) \le 2 \cdot \Pr\big( \gap Y \ge 2^{-8} pM \text{ and } \, \Delta(R) \le 1/p \gap \big).
\end{equation}
\end{claim}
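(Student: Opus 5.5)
The plan is to couple the two sides through the independence of $F$ and $R$. I will show that for every fixed outcome of $R$ in the event $\cR$, the random subgraph $F$ satisfies $Y \ge 2^{-8}pM$ with (conditional) probability at least $1/2$. Since $\cR \subset \{\Delta(R) \le 1/p\}$ and $F$ is chosen independently of $R$, this gives
$$\Pr\big( Y \ge 2^{-8}pM \text{ and } \Delta(R) \le 1/p \big) \ge \Ex\Big[ \1_{\cR} \cdot \Pr_F\big( Y \ge 2^{-8}pM \mid R \big) \Big] \ge \tfrac12 \Pr(\cR),$$
which is~\eqref{eq:Ubound:W}.

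So fix $R \in \cR$, and set $N = e(R \cap H)$, so that $N \ge pM/2$.
\begin{itemize}
\item Since $R \cap H \subset K_3(R)$, for each edge $e = xy \in E(R\cap H)$ I can fix one vertex $z_e$ with $xz_e, yz_e \in E(R)$.
\item Call $e$ \emph{good} if $e \in F$ and neither $xz_e$ nor $yz_e$ lies in $F$. These two edges are in $R$, so if they avoid $F$ they are edges of $R - F$. Hence every good edge is counted by $Y$, and $Y \ge X - W$, where
\begin{itemize}
\item $X = |E(R\cap H) \cap F|$, and
\item $W$ is the number of $e \in E(R\cap H)\cap F$ for which at least one of $xz_e, yz_e$ lies in $F$.
\end{itemize}
\end{itemize}
Note that $xz_e$ or $yz_e$ can only lie in $F$ if it lies in $H$.

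First, $X \sim \Bin(N,\alpha)$. By Chernoff's inequality, $\Pr_F(X < \alpha N/2) \le e^{-\alpha N/8}$. This is tiny, because $N \ge pM/2 \ge p k^{4/3}/2 \to \infty$.

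Second, for each $e$, the event "$e \in F$ and $xz_e \in F$" involves two distinct edges, so it has probability at most $\alpha^2$, and similarly for $yz_e$. Therefore $\Ex_F[W] \le 2\alpha^2 N$, and by Markov's inequality
$$\Pr_F(W \ge \alpha N/4) \le 8\alpha = 1/4.$$

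Combining the two bounds, with probability at least $1 - 1/4 - e^{-\alpha N/8} \ge 1/2$ we have
$$Y \ge X - W \ge \frac{\alpha N}{2} - \frac{\alpha N}{4} = \frac{\alpha N}{4} \ge \frac{\alpha p M}{8} = 2^{-8} p M,$$
as required, using $\alpha = 2^{-5}$.

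The main obstacle is obtaining concentration for the number of good edges. A direct second-moment bound on this count fails: a single witness edge $xz$ can serve up to $\Delta(R) \le 1/p$ edges $e$, which gives variance of order $N/p$, and Chebyshev then only yields a bound of order $1/(p^2 M)$, which need not be small. The decomposition $Y \ge X - W$ avoids this. The binomial variable $X$ is concentrated by Chernoff, and the correction $W$ only needs a first-moment (Markov) bound, which is cheap because $\alpha$ is a small constant.
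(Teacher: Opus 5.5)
Your proof is correct and is essentially the paper's argument: you fix a witness triangle for each edge of $R\cap H$, bound $Y \ge X - W$ with $X\sim\Bin(N,\alpha)$ controlled by Chernoff and $W$ (the paper's $B(R_0)$) controlled by Markov via $\Ex_F[W]\le 2\alpha^2 N$, and then average over $R\in\cR$ using the independence of $F$ and $R$. The closing remark on why a direct second-moment bound fails is a reasonable aside but not needed for the proof.
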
 

\begin{proof}
To prove this claim, recall that if $R_0 \in \cR$, then for each edge $e \in E(R_0 \cap H)$, there exists a triangle $T_e \subset R_0$ with $e \in E(T_e)$. For each edge fix such a triangle, define 
$$X(R_0) = e(R_0 \cap F) \qquad \text{and} \qquad B(R_0) = \big|\big\{ e \in E(R_0 \cap F) : e(T_e \cap F) \ge 2 \big\} \big|,$$
and observe that if $R = R_0$, then $Y \ge X(R_0) - B(R_0)$. Moreover, we have 
$$\Pr_F\bigg( X(R_0) \le \gap \frac{\alpha}{2} \cdot e(R_0 \cap H) \bigg) \le \frac{1}{4} \qquad \text{and} \qquad \Ex_F\pig[ B(R_0) \pig] \le 2\alpha^2 \cdot e(R_0 \cap H),$$
since $X(R_0) \sim \Bin\pig( e(R_0 \cap H), \alpha \pig)$ and $e(R_0 \cap H) \ge pM/2$ is sufficiently large. 

By Markov's inequality and our choice of $\alpha$, it follows that
$$\Pr\bigg( Y \ge \gap \frac{\alpha}{4} \cdot e(R \cap H) \;\Big|\; R = R_0 \bigg) \ge \frac{1}{2}$$
for every $R_0 \in \cR$. Finally, averaging over the choice of $R_0$, we obtain  
$$\Pr\bigg( \cR \cap \bigg\{ Y \ge \gap \frac{\alpha}{4} \cdot e(R \cap H) \bigg\} \bigg) \ge \frac{1}{2} \cdot \Pr(\cR).$$
Since the event $\cR$ implies that $e(R \cap H) \ge pM/2$ and $\Delta(R) \le 1/p$, the claim follows.
\end{proof}

To bound the right-hand side of~\eqref{eq:Ubound:W}, we apply Lemma~\ref{lem:degree-square} to the graph $R' = R - F$, and Chernoff's inequality to the graph $R \cap F$, for each choice of $F \subset H$. To be precise, for any fixed $F$ we have
\begin{equation}\label{eq:degree-square:app}
\Pr\bigg( \sum_{v \ina V(R')} d_{R'}(v)^2 \ge 2^{-9} M \text{ and } \Delta(R') \le 1/p \bigg) \le e^{-cpM}
\end{equation}
for some constant $c > 0$, by Lemma~\ref{lem:degree-square}. On the other hand, observe that given $F$ and $R'$, the conditional distribution of $Y$ is $\Bin(t,p)$, 
where $t$ is the number of edges of $F$ with a common neighbour in $R'$. It follows that, for any $R_0$ such that $\sum_{v \ina V(R_0)} d_{R_0}(v)^2 < 2^{-9} M$ and any choice of $F$, we have 
$$\Pr\big( \gaap Y \ge 2^{-8} pM \,\big|\, R' = R_0 \gap \big) \le \Pr\big( \Bin\pig( 2^{-9} M, p \pig) \ge 2^{-8} pM \big) \le e^{-cpM}$$
by Chernoff's inequality. Averaging over the choices of $F$ and $R_0$, and recalling~\eqref{eq:chernoff:RcapH}, ~\eqref{eq:degree-square:app} and Claim~\ref{claim:Ubound:W} it follows that  
$$\Pr\Big( R \cap H \subset K_3(R) \text{ and } \gaaap \Delta(R) \le 1/p \Big) \le \gap \Pr(\gap\cR) + e^{-pM/8} \le 5 \cdot e^{-cpM}.$$
Adjusting the constant $c$ slightly, this completes the proof of the lemma. 
\end{proof}

\begin{remark}
The proof above implies more generally that if $R \sim G(N,q)$ and $H$ is a graph with $N$ vertices and $e(H) \gg \max\big\{ N \log N, q^2N^{3+\gamma} \big\}$ edges for some $\gamma > 0$, then 
$$\Pr\Big( R \cap H \subset K_3(R) \text{ and } \, \Delta(R) \le 1/q \Big) \le 2 \cdot e^{-cq e(H)}$$
for some constant $c > 0$ depending on $\gamma$. Indeed, the bound on $e(H)$ implies that $d \gg 1$ and $d \ge qN^{1+\gamma/3}$, which suffice for the second inequality in~\eqref{eq:degreesquare:calc}, and the remaining steps of the proof only require the weaker bound $e(H) \gg 1/q$.  
\end{remark}

\section{The proof of Lemma~\ref{lem:manyopen:probbound}}\label{sec:outside:edges}

To deduce Lemma~\ref{lem:manyopen:probbound} from Lemma~\ref{lem:good_to_probability}, we will reveal the edges of $A$ in two stages. First we will reveal the edges outside the set $U$, and apply the following lemma to bound the probability that more than $3/4$ of the edges of $H$ have a common neighbour in $A$ outside $U$. We will then apply Lemma~\ref{lem:good_to_probability} to the surviving edges of $H$. 

\begin{lemma}\label{lem:reveal:outside}
Let $H$ be a bipartite graph with $V(H) \subset V(A)$ and $v(H) \le v(A)/2$, and set
$$Z = \big| \big\{ xy \in E(H) : \exists \, z \in V(A) \setminus V(H) \text{ such that } xz,yz \in E(A) \big\} \big|.$$
Then
\begin{equation}\label{eq:gout}
\Pr\bigg( Z \ge \frac{e(H)}{2} \gap\text{ and }\, \Delta(A) \le 2pm \bigg) \le \exp\bigg( - \frac{p \gap e(H)}{4} \bigg).
\end{equation}
\end{lemma}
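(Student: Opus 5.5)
The plan is to expose, for each vertex $z \in V(A) \setminus V(H)$, the random set $N_z = N_A(z) \cap V(H)$, and then to bound $Z$ by a sum of \emph{independent} contributions, one for each $z$, to which an exponential moment (Chernoff-type) bound applies. Let $X \cup Y$ be the bipartition of $H$. Since distinct $z$ use disjoint sets of potential edges to $V(H)$, the sets $N_z$ are independent, and each vertex of $V(H)$ lies in $N_z$ independently with probability $p$.

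\textbf{Step 1 (reduction to independent variables).} Every edge counted by $Z$ lies in $H[N_z]$ for some $z$, so $Z \le \sum_z e(H[N_z])$. To incorporate the degree condition while keeping independence, I truncate. Set $S_z = N_z \cap X$, and define $Y_z = e(H[N_z])$ if $|S_z| \le 2pm$ and $Y_z = 0$ otherwise. On the event $\Delta(A) \le 2pm$ we have $|S_z| \le 2pm$ for every $z$, so $Z \le \sum_z Y_z$. The $Y_z$ are independent, and there are at most $m$ of them.

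\textbf{Step 2 (moment generating function of one $Y_z$).} I take $\lambda = p$ and bound $\Ex[e^{pY_z}]$ in two stages.
\begin{itemize}
\item First condition on $S_z = S$ with $|S| \le 2pm$. Then $Y_z = \sum_{y \in Y} \1[y \in N_z]\, d_H(y,S)$, which is a sum of independent terms, so
$$\Ex\big[e^{pY_z} \mid S\big] = \prod_{y \in Y} \Big( 1 + p\big(e^{p\, d_H(y,S)} - 1\big) \Big).$$
Since $p\, d_H(y,S) \le 2p^2 m = 1/4$, we have $e^{p\,d} - 1 \le 1.3\, p\, d$. Hence $\Ex[e^{pY_z} \mid S] \le \exp\big(1.3\, p^2 e_H(S,Y)\big)$.
\item Next average over $S$, keeping the bound above even for truncated $S$, since it only increases. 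Writing $e_H(S,Y) = \sum_{x \in S} d_H(x)$, whose terms are independent, we get
$$\Ex\big[e^{1.3 p^2 e_H(S,Y)}\big] \le \prod_{x \in X} \Big( 1 + p\big(e^{1.3 p^2 d_H(x)} - 1\big) \Big).$$
Here $1.3\, p^2 d_H(x) \le 1.3\, p^2 v(H) \le 1.3\, p^2 m/2 < 1/10$, using $v(H) \le v(A)/2$ and $p^2 m = 1/8$. So this product is at most $\exp\big(1.4\, p^3 e(H)\big)$.
\end{itemize}

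\textbf{Step 3 (combine).} By independence, $\Ex[e^{p\sum_z Y_z}] \le \exp\big(1.4\, p^3 m\, e(H)\big) = \exp\big(0.175\, p\, e(H)\big)$. Markov's inequality then gives
$$\Pr\Big( \sum_z Y_z \ge \tfrac{e(H)}{2} \Big) \le \exp\big( -\tfrac{p\,e(H)}{2} + 0.175\, p\, e(H) \big) \le \exp\big( -\tfrac{p\,e(H)}{4} \big),$$
which proves~\eqref{eq:gout}.

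The main obstacle is Step 2. The edges of $H$ covered by a single $z$ are strongly correlated, so a naive Chernoff bound on edges fails. Bipartiteness is what lets us expose the $X$-side first and treat the $Y$-side as independent. The degree cap $2pm$ then keeps each exponent bounded by a constant, precisely because $m = 1/(8p^2)$, and this is where the constants have to be checked with care.
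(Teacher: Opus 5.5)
Your proof is correct and is essentially the paper's argument: up to your truncation, $\sum_z Y_z$ is exactly the paper's sum $\sum_{v \in T}\sum_{w \in W} a(v,w)\,\1[vw \in E(A)]$ (with $S,T$ the parts of $H$ and $W = V(A)\setminus V(H)$). This is a weighted sum over the independent edges of $A[T,W]$, with weights capped at $2pm$ by the degree condition. The only difference is that the paper conditions on $A[S,W]$, uses $\Delta(A[S,W]) \le 2pm$ to bound the mean, and then quotes a weighted Chernoff inequality, whereas you average over the first stage inside the moment generating function and enforce the weight cap by truncation, which gives the slightly better exponent $0.325\,p\,e(H)$.
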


The proof of this lemma is quite simple: let $S$ and $T$ be the parts of $H$, so $V(H) = S \cup T$, reveal the edges of $A$ between $S$ and $W = V(A) \setminus V(H)$, and then apply the following corollary of Chernoff's inequality to bound the probability that $Z$ is larger than expected when we reveal the edges between $T$ and $W$. 

\begin{chernoff}
Let $0 \le a_1,\ldots,a_N \le M$ and let $X_1,\ldots,X_N$ be independent $\Ber(p)$ random variables. Then
$$\Pr\bigg( \sum_{i = 1}^N a_i X_i \ge t \bigg) \le \exp\bigg( - \frac{t}{8M}  \bigg)$$
for every $t \ge 2p \sum_{i = 1}^N a_i$. 
\end{chernoff}  

We will apply this inequality with 
$$\big\{ a_1,\ldots,a_N \big\} = \big\{ |N_H(v) \cap N_A(w) \cap S| : v \in T, \, w \in W \big\},$$
so that the sum of the $a_i$ that correspond to edges $vw \in E(A)$ is equal to the number of triples $uvw$ with $uv \in E(H[S,T])$ and $uw,vw \in E(A)$, and hence is an upper bound on $Z$.

\begin{proof}[Proof of Lemma~\ref{lem:reveal:outside}]
Let $S$ and $T$ be the parts of $H$, and set $W = V(A) \setminus V(H)$. Reveal the edges of $A[S,W]$, and assume that $\Delta\pig( A[S,W] \pig) \le 2pm$, since otherwise the event on the left-hand side of~\eqref{eq:gout} cannot hold. Now, for each $v \in T$ and $w \in W$, define 
$$a(v,w) = |N_H(v) \cap N_A(w) \cap S|,$$
and observe that 
$$Z \le \sum_{v \ina T} \sum_{w \ina W} a(v,w) \1\big[ vw \in E(A) \big].$$
Note also that $a(v,w) \le \Delta\pig( A[S,W] \pig) \le 2pm$, and that
$$2p \sum_{v \ina T} \sum_{w \ina W} a(v,w) \le 2p \cdot \Delta\pig( A[S,W] \pig) \cdot e(H) \le 4p^2m \cdot e(H) \le \frac{e(H)}{2},$$
since $p^2 m \le 1/8$. Therefore, applying Chernoff's inequality with $t = e(H)/2$ and $M = 2pm$ to the random choice of the edges of $A[T,W]$ and the sequence $a(v,w)$, we deduce that
$$\Pr\bigg( Z \ge \frac{e(H)}{2} \bigg) \le \exp\bigg( - \frac{e(H)}{32pm} \bigg) \le \exp\bigg( - \frac{p \gap e(H)}{4} \bigg),$$
as claimed, since $p^2 m \le 1/8$. 
\end{proof}

Combining Lemmas~\ref{lem:good_to_probability} and~\ref{lem:reveal:outside} gives Lemma~\ref{lem:manyopen:probbound}.

\begin{proof}[Proof of Lemma~\ref{lem:manyopen:probbound}]
Since $V(H)$ is an $A$-transversal, each edge of $H$ corresponds to a distinct pair of vertices of $U = \pi_A(V(H)) \subset V(A)$. Reveal the edges of $A$ between $U$ and $V(A) \setminus U$, and define $H'$ to be the graph with edge set
$$E(H') = \big\{ xy \in E(H) : \nexists \, z \in V(A) \setminus U \text{ such that } xz,yz \in E(A) \big\}.$$
By Lemma~\ref{lem:reveal:outside}, applied to a bipartite subgraph of $H$ with at least $e(H)/2$ edges, we have
\begin{equation}\label{eq:surviving:edges}
\Pr\bigg( e(H') < \frac{e(H)}{4} \gap\text{ and }\gaaap \Delta(A) \le 2pm \bigg) \le \exp\bigg( - \frac{pe(H)}{8} \bigg).
\end{equation}
We now apply Lemma~\ref{lem:good_to_probability} to the graphs $R = A[U]$ and $H'$, 
which gives
$$\Pr\Big( A[U] \cap H' \subset K_3(A[U]) \text{ and } \gaaap \Delta(A[U]) \le 1/p \Big) \le 2 \cdot e^{-cp e(H)/4}.$$
Finally, observe that if $V(H) \in \cI(G)$ and $\Delta(A) \le 2pm$, then every edge of $A \cap H$ must be contained in a triangle of $A$, and $\Delta(A[U]) \le \Delta(A) \le 2pm \le 1/p$. Thus, combining these two inequalities, it follows that
$$\Pr_A\big( \gaap V(H) \in \cI(G) \text{ and }\gaaap \Delta(A) \le 2pm \gap \big) \le 2 \cdot e^{-c p e(H)/8},$$
as required.
\end{proof}

\section{Bounding the number of open pairs}\label{sec:open:pairs}

In this section we will prove Lemma~\ref{lem:bipartite:lemma}. The main step will be the following lemma; we will then deduce the statement for well-spread sets in $G$ via a simple averaging argument. Given disjoint sets $S,T \subset V(A)$, define 
$$o_A(S,T) = \big| \big\{ (u,v) \in S \times T : N_A(u) \cap N_A(v) = \emptyset \big\} \big|$$
to be the number of open edges with respect to $A$ with one vertex in $S$ and the other in $T$. Recall that $C$ is a sufficiently large constant, and that $\eps = 2^{-12}$.  

\begin{lemma}\label{lem:bipartite:A}
With high probability, for every pair $S,T \subset V(A)$ of disjoint sets such that
\begin{equation}\label{eq:ST:sizes}
\frac{C\log m}{2} \le |S| \le |T| \le C \log m
\end{equation}
and  
\begin{equation}\label{eq:both:nbhds:bigger}
\min\bigg\{ \frac{|N_A(v) \cap S|}{|S|}, \frac{|N_A(v) \cap T|}{|T|} \bigg\} \le 4\eps
\end{equation}
for every $v \in V(A)$, we have
$$o_A(S,T) \ge \frac{|S||T|}{4}.$$
\end{lemma}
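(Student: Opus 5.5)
Fix disjoint $S,T\subset V(A)$ satisfying~\eqref{eq:ST:sizes}; there are at most $m^{2C\log m}=e^{2C(\log m)^2}$ such pairs. The plan is to show that, for each fixed pair, the probability that~\eqref{eq:both:nbhds:bigger} holds for every $v$ while $o_A(S,T)<|S||T|/4$ is at most $e^{-\omega((\log m)^2)}$. A union bound then finishes the proof. We expose, for each $v\in V(A)\setminus(S\cup T)$, its neighbourhoods $X_v=N_A(v)\cap S$ and $Y_v=N_A(v)\cap T$, which are independent over $v$. We also expose the edges inside $S\cup T$.

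\emph{Outside common neighbours.} A pair $(u,w)\in S\times T$ is closed through an outside vertex only if $(u,w)\in X_v\times Y_v$ for some $v$. So the number of such closed pairs is at most $\sum_v |X_v||Y_v|$. Condition~\eqref{eq:both:nbhds:bigger} lets us truncate: $\min\{|X_v|,|Y_v|\}\le 4\eps C\log m$ for every $v$. Each summand is therefore at most $4\eps C\log m\cdot|X_v|$ when $|X_v|$ is the small one, and symmetrically otherwise. Define $Z_v=|X_v||Y_v|\wedge(4\eps C\log m)\max\{|X_v|,|Y_v|\}$, which is bounded by $L=4\eps (C\log m)^2$. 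These variables are independent, and
$$\Ex\Big[\sum_v Z_v\Big]\le m p^2|S||T|\le |S||T|/8.$$
We apply Chernoff's inequality in the weighted form stated above: decompose $Z_v$ according to the sizes of $X_v,Y_v$, or use a Bernstein-type bound for independent bounded variables. This gives
$$\Pr\Big(\sum_v Z_v\ge |S||T|/2\Big)\le \exp\big(-c|S||T|/L\big)=\exp(-c/\eps).$$
\textbf{This is the main obstacle.} The tail bound $\exp(-c/\eps)$ is only a constant, far too weak to beat $e^{2C(\log m)^2}$. The fix I would try first is to bound the large-deviation probability directly, by counting the configurations of vertices $v$ with large $|X_v|$. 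Since $|X_v|\sim\Bin(|S|,p)$ with $p|S|=C^2(\log m)^2/k\ll1$ (note $\log m\approx 2\log k$), the event $|X_v|\ge j$ has probability about $(p|S|)^j/j!$. Having total weight $\Omega(|S||T|)=\Omega((C\log m)^2)$ forces many high-degree vertices. Their number is $\Omega(C\log m/\eps)$ vertices of degree about $4\eps C\log m$, or fewer of larger degree, and each costs a factor roughly $p|S|\le k^{-1+o(1)}$ per unit of degree. The total probability is then $k^{-\Omega((C\log m)^2)}=e^{-\Omega(C^2(\log m)^3)}$, which comfortably beats the union bound. This step also controls the pairs closed by vertices of $S\cup T$ other than through inside edges.

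\emph{Inside common neighbours.} A common neighbour inside $S\cup T$ must lie in $S$ or in $T$. The edges within $S\cup T$ form $G(|S|+|T|,p)$ on $O(C\log m)$ vertices, whose expected number of edges is $O(p(C\log m)^2)=o(1)$. The probability that it has more than $(\log m)^{1/2}$ edges is $e^{-\Omega((\log m)^{3/2}\log k)}$. I would sharpen this by taking the threshold $\Theta(\log m)$ edges, which gives the needed $e^{-\omega((\log m)^2)}$. Even with $D$ such edges, at most $D\cdot 4\eps C\log m\cdot|T|$ pairs are closed inside, after again using~\eqref{eq:both:nbhds:bigger} to bound degrees into one side. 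With $D$ a small constant times $1/\eps$, this is less than $|S||T|/4$. Combining the two parts, at most $3|S||T|/4$ pairs are closed, so $o_A(S,T)\ge|S||T|/4$.
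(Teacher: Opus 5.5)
Your frame is sound: a union bound over the at most $m^{2C\log m}$ pairs, independent exposure of the pairs $(X_v,Y_v)$, and truncation of $|X_v||Y_v|$ using~\eqref{eq:both:nbhds:bigger}. You also correctly see that a single Chernoff/Bernstein bound with summands bounded by $L=4\eps(C\log m)^2$ gives only a constant tail. But your fix fails as stated.

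It is false that weight $\Omega(|S||T|)$ forces many high-degree vertices. Essentially all of $\Ex\sum_v Z_v\approx p^2m|S||T|=|S||T|/8$ comes from the $\Theta(|S||T|)$ vertices with $|X_v|=|Y_v|=1$. Your cost of ``$p|S|$ per unit of degree'' forgets the factor $m$ for choosing $v$, and for these vertices that factor exactly cancels it, since $m\cdot p|S|\cdot p|T|=|S||T|/8$.

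The missing idea is to split the vertices by their number of neighbours in $S\cup T$.
\begin{itemize}
\item Vertices with at most two neighbours there contribute independent $\{0,1\}$-valued terms with total mean at most $|S||T|/8$. Chernoff bounds the probability that these terms sum to $|S||T|/4$ by $e^{-c|S||T|}=e^{-\Omega(C^2(\log m)^2)}$.
\item For vertices with at least three neighbours, \eqref{eq:both:nbhds:bigger} gives $d_S(w)d_T(w)\le 4\eps|T|(d_S(w)+d_T(w))$. So closing $|S||T|/4$ pairs forces at least $|S|/(16\eps)$ edges from them into $S\cup T$. A first-moment count over such dense configurations, using $p^3m=p/8\ll 1$, bounds this by $m^{-\Omega(|S|/\eps)}$.
\end{itemize}
This rate is $e^{-\Theta(C(\log m)^2/\eps)}$, not the $e^{-\Omega(C^2(\log m)^3)}$ you claim. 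It beats the union bound only because $1/\eps$ is a large constant.

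This is exactly the structure of the paper's proof. Claim~\ref{claim:outside} treats the vertices outside $Q=\{w\in W:|N_A(w)\cap S|=1\}$ via the set $P$ and a greedily built set $Z$. Claim~\ref{claim:Q:closers} uses the disjoint unique-neighbour sets $Q(u)$, which make the events $N_A(v)\cap Q(u)=\emptyset$ independent over $(u,v)\in S\times T$. Chernoff then directly lower-bounds the number of pairs not closed through $Q$.

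Your treatment of common neighbours inside $S\cup T$ is also inconsistent. To beat $m^{2C\log m}$, the edge threshold $D$ must be at least about $4C\log m$; a threshold $a\log m$ gives only $e^{-\Theta(a(\log m)^2)}$, so the constant matters. Yet your bound $D\cdot 4\eps C\log m\cdot|T|$ on the closed pairs is below $|S||T|/4$ only if $D<1/(16\eps)$. For such constant $D$, the probability that $e(A[S\cup T])\ge D$ is about $k^{-D}\ge k^{-1/(16\eps)}$, far larger than $m^{-2C\log m}$.

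The spurious factor $|T|$ is the problem. Counting per vertex gives $\sum_{x\in S\cup T}d_S(x)d_T(x)\le 4\eps C\log m\sum_{x}(d_S(x)+d_T(x))=8\eps C\log m\cdot e(A[S\cup T])$, which allows $D=10C\log m$. Alternatively, as the paper does, absorb the vertices of $S\cup T$ into the first-moment argument for high-degree vertices, since $\sum_{x\in S\cup T}|N_A(x)\cap(S\cup T)|=2e(A[S\cup T])$.
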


The proof of this lemma is again quite straightforward. We first use the property~\eqref{eq:both:nbhds:bigger} and a simple first moment argument to bound the number of pairs in $S \times T$ that are closed by vertices with at least two neighbours in $S$. The point is that there is a large set of vertices (larger than $S \cup T$ by a factor of $1/\eps$) that each have at least three neighbours in $S \cup T$. 

\pagebreak

To bound the number of pairs closed by vertices with exactly one neighbour in $S$, on the other hand, we will first reveal the edges between $S$ and $W = V(A) \setminus(S \cup T)$, and show that with (very) high probability each vertex $u \in S$ has a set $Q(u)$ of roughly $pm$ unique neighbours. We will then use Chernoff's inequality to control the number of pairs $(u,v) \in S \times T$ such that $N_A(v) \cap Q(u) \ne \emptyset$ when we reveal the edges between $T$ and $W$. 

\begin{proof}[Proof of Lemma~\ref{lem:bipartite:A}]
Fix disjoint sets $S,T \subset V(A)$ satisfying~\eqref{eq:ST:sizes}, and let $\cN$ be the event that~\eqref{eq:both:nbhds:bigger} holds for every $v \in V(A)$. We will show that
$$\Pr\Big( \cN \cap \big\{ o_A(S,T) < |S||T|/4 \big\} \Big) \le m^{-3C \log m}.$$
The lemma will then follow, simply by taking a union bound over all choices of $S$ and $T$ with sizes in the given range.

The idea of the proof is to consider separately those pairs that are closed by a  vertex in the set 
$$Q = \big\{ w \in W : |N_A(w) \cap S| = 1 \big\},$$
where $W = V(A) \setminus(S \cup T)$, and those that are closed by vertices in $V(A) \setminus Q$. To control the number of pairs of the second type, write 
$$d_S(w) = |N_A(w) \cap S| \qquad \text{and} \qquad d_T(w) = |N_A(w) \cap T|,$$
and note that $w$ closes exactly $d_S(w)d_T(w)$ pairs. The following claim therefore provides a suitable bound on the probability that there are many pairs of the second type. 

\begin{claim}\label{claim:outside}
\begin{equation}\label{eq:closed:outside:Q}
\Pr\bigg( \cN \cap \bigg\{ \sum_{w \ina V(A) \setminusa Q} d_S(w) d_T(w) \ge \frac{|S| |T|}{4} \bigg\} \bigg) \le \exp\bigg( - \frac{|S| \log m}{2^8 \eps} \bigg).
\end{equation}
\end{claim}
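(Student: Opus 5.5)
The plan is to split the vertices $w \in V(A) \setminus Q$ into three types. Those in $W$ with $d_S(w) = 0$ contribute nothing. Those in $W$ with $d_S(w) \ge 2$ and $d_T(w) \ge 1$, which I collect in a set $X$, each have at least three neighbours in $S \cup T$. The remaining vertices lie in $S \cup T$ itself. The event $\cN$ bounds each individual contribution in terms of degrees. Indeed, \eqref{eq:both:nbhds:bigger} gives either $d_S(w) \le 4\eps|S|$ or $d_T(w) \le 4\eps|T|$, and hence, using $|S| \le |T| \le 2|S|$,
$$d_S(w)\, d_T(w) \le 4\eps \max\big\{ |S|\, d_T(w), |T|\, d_S(w) \big\} \le 8\eps |S| \big( d_S(w) + d_T(w) \big).$$
So if the sum in~\eqref{eq:closed:outside:Q} is at least $|S||T|/4$, then the number of edges of $A$ between $X \cup S \cup T$ and $S \cup T$ is at least $|T|/(32\eps) \ge |S|/(32\eps)$. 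It follows that either $e(A[S \cup T]) \ge |S|/(2^7\eps)$, or the number of edges between $X$ and $S \cup T$ is at least $L := |S|/(2^7\eps)$. I would bound the probability of each of these two events separately, and I will never need to use $\cN$ again.

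\emph{Edges inside $S \cup T$.} Write $D = \lceil |S|/(2^7\eps) \rceil$ and recall that $|S \cup T| \le 2C\log m$. The probability that $A[S \cup T]$ contains $D$ edges is at most
$$\binom{\binom{2C\log m}{2}}{D} p^D \le \bigg( \frac{e(C\log m)^2\, p}{D} \bigg)^D.$$
Since $p = (8m)^{-1/2}$, this is at most $m^{-D/3}$. This is far smaller than the target $\exp(-|S|\log m/(2^8\eps))$, because $D/3 \ge |S|/(2^8 \cdot 3 \cdot \eps/2)$, which beats the exponent after adjusting constants; I would pick the split thresholds so that the constants work out.

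\emph{Vertices of $X$.} This is the main step. I would take a union bound over the possible witnesses. Each vertex of $X$ has degree at least $3$ into $S \cup T$, so after discarding excess edges I may assume a witness is a set $X'$ of $\ell$ vertices of $W$ together with a set of exactly $L' \in [L, L+2]$ edges from $X'$ to $S \cup T$, with every vertex of $X'$ incident to at least $3$ of these edges. This forces $\ell \le L'/3$. The number of such configurations is at most $m^{\ell}\binom{2C\log m \cdot \ell}{L'}$, and each is present with probability $p^{L'}$. The probability is therefore at most
$$\sum_{\ell \le L'/3} m^{\ell} \big( 2eC\log m \cdot p \big)^{L'} \le m^{L'/3} \, m^{-L'/2} \, (O(\log m))^{L'} \le m^{-L'/7}.$$
Since $L'/7 \ge |S|/(2^{7} \cdot 7 \eps)$ is comfortably larger than $|S|/(2^8\eps)$ up to the constant bookkeeping, and the sums over $\ell$ and $L'$ cost only polynomial factors, this is at most $\tfrac12\exp(-|S|\log m/(2^8\eps))$.

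The main obstacle is matching these constants precisely to the stated exponent $|S|\log m/(2^8\eps)$: the degree-versus-contribution inequality loses factors of $2$, and the counting gains only the $m^{-1/6}$ per edge coming from $m^{1/3}p$. If the constants are too tight, I would refine the inequality above by using $d_S(w) + d_T(w) \ge 3$ more efficiently, or note that the target exponent allows any $\Omega(|S|\log m/\eps)$ and recheck which absolute constant is needed downstream. A secondary point is that the randomness here is in $A$ restricted to edges touching $S \cup T$; since no conditioning is involved, the plain first-moment union bound suffices.
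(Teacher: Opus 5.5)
Your strategy is essentially the paper's: use $\cN$ to turn the closed-pair count into a degree sum, then take a first-moment bound over sets of vertices of $W$ with at least three neighbours in $S\cup T$. However, the constant bookkeeping, which you flag as the main obstacle, goes the wrong way, so as written the proposal does not prove the stated bound.

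Both of your comparisons are reversed. Since $D, L' \approx |S|/(2^7\eps)$, the exponents $D/3 \approx |S|/(384\eps)$ and $L'/7 \approx |S|/(896\eps)$ are \emph{smaller} than $|S|/(2^8\eps)$. Hence $m^{-D/3}$ and $m^{-L'/7}$ are larger than the target $\exp(-|S|\log m/(2^8\eps)) = m^{-|S|/(2^8\eps)}$.

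This is not slack in your estimates; your two sub-events really are too likely. When $|S|=|T|=C\log m$ and $N=\binom{|S\cup T|}{2}$, we have $\Pr(e(A[S\cup T])\ge D) \ge \binom{N}{D}p^D(1-p)^N \ge (c'\log m)^D m^{-D/2}$. This exceeds $m^{-|S|/(2^8\eps)}\approx m^{-D/2}$. Similarly, $\Pr(e(X,S\cup T)\ge L)$ is about $m^{-L/6+o(L)}$, larger still. So no sharpening of the two individual estimates can rescue the split as you set it up. Your fallback of accepting any $\Omega(|S|\log m/\eps)$ exponent proves a weaker statement than the claim. With your constants, in the worst case $|S| = C\log m/2$, it gives only about $m^{-(16/7)C\log m}$. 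That falls short of the $m^{-3C\log m}$ used when the claims are combined, though it would still beat a union bound over the roughly $m^{2C\log m}$ pairs $(S,T)$.

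The losses come from two places, and both are easily repaired.

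First, bounding $d_S(w)d_T(w)$ by $8\eps|S|(d_S(w)+d_T(w))$ throws away a factor of two. Using $|S|\le|T|$ one has $d_S(w)d_T(w)\le 4\eps|T|(d_S(w)+d_T(w))$. With $t=|S|/(32\eps)$, this gives $2e(A[S\cup T])+e(X,S\cup T)\ge |S|/(16\eps)=2t$, whereas you only obtain $t$ (when $|T|\approx|S|$).

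Second, an inside edge costs about $m^{-1/2}$, while an edge from $X$ costs only about $m^{-1/6}$. A split into ``$e(A[S\cup T])\ge\alpha$ or $e(X,S\cup T)\ge\beta$'' therefore needs roughly $\alpha>t/4$ and $\beta>3t/4$ to beat $m^{-t/8}$. That is incompatible with your constraint $2\alpha+\beta\le t$, but compatible with $2\alpha+\beta\le 2t$.

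So with the sharper inequality your argument goes through with $\alpha=t/2$ and $\beta=t$. The two events then have probability at most $m^{-t/4+o(t)}$ and $m^{-t/6+o(t)}$ (the latter by your witness count). Each is at most $\tfrac12 m^{-t/8}=\tfrac12\exp(-|S|\log m/(2^8\eps))$.

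The paper avoids the split altogether. It greedily finds $Z\subset W$, consisting of vertices with at least three neighbours in $S\cup T$, with $|Z|\le t/3$ and $e(A[S\cup T\cup Z])\ge t$. It then bounds $\binom{m}{t/3}\binom{t^2}{t}p^t\le m^{-t/8}$ in a single union bound.
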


\begin{proof}
Observe first that $d_S(w) d_T(w) = 0$ if $w \in P \setminus Q$, where
$$P = \big\{ w \in W : |N_A(w) \cap (S \cup T)| \le 2 \big\}.$$ 
Recall also that if the event $\cN$ holds, then either $d_S(w) \le 4\eps |S| \le 4\eps |T|$ or $d_T(w) \le 4\eps |T|$, and therefore 
$$d_S(w) d_T(w) \le 4\eps |T| \big( d_S(w) + d_T(w) \big),$$
for each $w \in V(A)$. The event on the left-hand side in~\eqref{eq:closed:outside:Q} therefore implies that 
\begin{equation}\label{eq:product:sum}
\sum_{w \ina V(A) \setminusa P} \pig| N_A(w) \cap (S \cup T) \pig| \ge \frac{|S|}{16\eps}.
\end{equation}
We claim that there exists a set $Z \subset W \setminus P$ such that
$$|Z| \le \frac{t}{3} \qquad \text{and} \qquad e\big( A[S \cup T \cup Z] \big) \ge t,$$
where $t = |S|/32\eps$. Indeed, we can construct such a set greedily by adding vertices of $W \setminus P$ one by one to the set $S \cup T$, since each has at least three neighbours in $S \cup T$. Taking a union bound over the choices of $Z$, and recalling that $p = \Theta\pig( m^{-1/2} \pig)$, it follows that
$$\Pr\bigg( \sum_{w \ina V(A) \setminusa P} \pig| N_A(w) \cap (S \cup T) \pig| \ge \frac{|S|}{16\eps} \gap \bigg) \le {m \choose t/3} {t^2 \choose t} p^t \le \big( p^3 m  \pigl( \log m \pig)^4 \big)^{t/3} \le m^{-t/8},$$
as claimed. 
\end{proof}

It remains to bound the number of pairs in $S \times T$ that have a common neighbour in $Q$. To do so, we start by revealing all of the edges between $S$ and $W$. For each $u \in S$, let
$$Q(u) = \big\{ w \in W : N_A(w) \cap S = \{u\} \big\}$$
be the set of unique neighbours of $u$ with respect to $S$. Let $\cQ$ denote the event that  
$$|Q(u)| \le 2pm$$
for every $u \in S$, and observe that $\cQ$ holds with probability at least $1 - \exp\pig( - m^{1/4} \pig)$, by Chernoff's inequality, since $Q(u) \subset N_A(u)$ and $p = \Theta\pig( m^{-1/2} \pig)$. 

Now, observe that $uv$ is closed by a vertex in $Q$ if and only if $N_A(v) \cap Q(u)$ is non-empty. Since the sets $Q(u)$ are disjoint, these events are independent for each $(u,v) \in S \times T$. The following claim therefore follows easily from Chernoff's inequality. 

\begin{claim}\label{claim:Q:closers}
$$\Pr\bigg( \cQ \cap \bigg\{ \big|\big\{ (u,v) \in S \times T : N_A(v) \cap Q(u) = \emptyset \big\} \big| \le \frac{|S||T|}{2} \bigg\} \bigg) \le e^{-c|S||T|}$$
for some constant $c > 0$. 
\end{claim}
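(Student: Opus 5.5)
We condition on the edges of $A$ between $S$ and $W$. This fixes the sets $Q(u)$ for $u \in S$, and we may assume they satisfy $\cQ$. The edges between $T$ and $W$ are still unrevealed and independent of this conditioning. Since $W$ is disjoint from $S \cup T$, every edge $vw$ with $v \in T$ and $w \in W$ is still an independent $\Ber(p)$ variable.

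For each pair $(u,v) \in S \times T$, let $E_{uv}$ be the event that $N_A(v) \cap Q(u) \ne \emptyset$. This event is determined by the edges between $v$ and $Q(u)$. The sets $Q(u)$ are pairwise disjoint, and distinct $v$ use disjoint edge sets, so the events $E_{uv}$ are mutually independent over all $(u,v)$. On $\cQ$, each of them has probability
$$\Pr(E_{uv}) \le 1 - (1-p)^{|Q(u)|} \le p|Q(u)| \le 2p^2 m = \frac14,$$
using $m = 1/(8p^2)$.

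The number $Y$ of pairs with $N_A(v) \cap Q(u) \ne \emptyset$ is therefore stochastically dominated by $\Bin(|S||T|, 1/4)$. The bad event in the claim says that fewer than half of the pairs are unclosed, which means $Y \ge |S||T|/2$. That is twice the dominating mean. Chernoff's inequality, applied with $a_i = 1$, $M = 1$ and $t = |S||T|/2 \ge 2 \cdot \tfrac14 |S||T|$, bounds the conditional probability of this by $\exp(-|S||T|/16)$.

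Averaging over the revealed edges between $S$ and $W$ restricted to $\cQ$ gives the claim with $c = 1/16$.

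I expect the only delicate point to be verifying independence, namely that the conditioning does not touch the edges between $T$ and $W$. This holds because $\cQ$ and the sets $Q(u)$ depend only on $A[S,W]$. Also, $\Pr(E_{uv}) \le 1/4$ holds exactly by the choice $m = 1/(8p^2)$, so the factor-of-two margin required by Chernoff's inequality is available.
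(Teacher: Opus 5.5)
Your proposal is correct and is essentially the paper's proof: condition on $A[S,W]$ (which determines the $Q(u)$ and $\cQ$), use the disjointness of the $Q(u)$ for independence, and apply Chernoff. The only difference is cosmetic: you bound the closing probability above by $p|Q(u)| \le 2p^2m = 1/4$ and use the upper-tail form via stochastic domination, which gives the explicit constant $c = 1/16$. The paper instead bounds the probability of staying open below by $(1-p)^{|Q(u)|} \ge e^{-4p^2m} = e^{-1/2} > 1/2$.
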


\begin{proof}
Recall that the event $\cQ$ depends only on the edges between $S$ and $W$, so we may reveal these edges and assume that $\cQ$ holds. For any such choice of $A[S,W]$, the events $N_A(v) \cap Q(u) = \emptyset$ are independent for all $(u,v) \in S \times T$, and have probability at least 
$$(1 - p)^{|Q(u)|} \ge \exp\big( - 4p^2 m \big) \ge e^{-1/2} > \frac{1}{2}$$
since $\cQ$ holds and $p^2 m \le 1/8$. The probability that at most $|S||T|/2$ of these events occur is thus at most $e^{-c|S||T|}$ for some constant $c > 0$, as claimed, by Chernoff's inequality. 
\end{proof}

Combining Claims~\ref{claim:outside} and~\ref{claim:Q:closers}, it follows that
$$\Pr\bigg( \cN \cap \bigg\{ o_A(S,T) < \frac{|S||T|}{4} \bigg\} \bigg) \le e^{- m^{1/4}} + e^{-c|S||T|} + \exp\bigg( - \frac{|S| \log m}{2^8 \eps} \bigg) \le m^{-3C \log m}$$
if $m$ is sufficiently large, $C \ge 16/c$ and $\eps = 2^{-12}$, since $|T| \ge |S| \ge (C/2) \log m$. As noted above, the lemma now follows by taking a union bound over the choices of $S$ and $T$.
\end{proof}

In order to deduce Lemma~\ref{lem:bipartite:lemma} from Lemma~\ref{lem:bipartite:A}, we will consider random subsets $X \subset S$ and $Y \subset T$ of size $C \log m$, and show that with probability at least $1/2$ we can apply the latter lemma to disjoint subsets of $\pi_A(X)$ and $\pi_A(Y)$. 

\begin{proof}[Proof of Lemma~\ref{lem:bipartite:lemma}]
Let $X \subset S$ and $Y \subset T$ be random subsets of size $|X| = |Y| = C \log m$, and observe that
$$\Ex\big[ o_A(X,Y) \big] = \frac{|X||Y|}{|S||T|} \cdot o_A(S,T).$$
To prove the lemma, it will therefore suffice to show that
$$\Pr\bigg( o_A(X,Y) \ge \frac{|X||Y|}{16} \bigg) \ge \frac{1}{2}.$$
To do so, we will apply Lemma~\ref{lem:bipartite:A} to disjoint sets $X' \subset \pi_A(X)$ and $Y' \subset \pi_A(Y)$. To bound the size of these sets, let
$$Z = \big\{ \{ x,y \} \subset X \cup Y : \pi_A(x) = \pi_A(y) \big\},$$
and observe that 
$$\Ex\pig[ |Z| \pig] \le \pig( \lambda(S) + \lambda(T) \pig) \pig( |X| + |Y| \pig)^2 = O(1)$$
by our bound on $\lambda(S) + \lambda(T)$. By Markov's inequality, it follows that with probability at least $3/4$, there exist such sets $X'$ and $Y'$ with 
$$\frac{C\log m}{2} \le |X'| \le |Y'| \le C \log m.$$
Moreover, by Chernoff's inequality and~\eqref{eq:both:nbhds:big}, we have
$$\Pr\bigg( \min\bigg\{ \frac{|N_A(v) \cap X|}{|X|}, \frac{|N_A(v) \cap Y|}{|Y|} \bigg\} \ge 2\eps \bigg) \le e^{-\eps^2 |X|} \le \frac{1}{m^2}$$
for every $v \in V(G)$, since $C$ is sufficiently large. Thus, taking a union bound over the choice of $v$, 
it follows that the neighbourhood condition $\cN$ holds for the pair $(X',Y')$ with probability at least $3/4$. Hence, applying  Lemma~\ref{lem:bipartite:A} to the sets $X'$ and $Y'$ whenever they satisfy the conditions of the lemma, we see that 
$$\Pr\bigg( o_A(X,Y) \ge \frac{|X'||Y'|}{4} \bigg) \ge \frac{1}{2},$$
as required.
\end{proof}

\section{The Structural Lemma}\label{sec:structural}

The final missing piece in the proof of Theorem~\ref{thm:acmgraph} is the following technical lemma, which divides the family of independent $k$-sets into five families. We will bound the expected number of independent sets in each of these families in Section~\ref{final:proof:sec}, using Lemma~\ref{lem:manyopen:probbound} and the pseudorandom 
property of the partitions $\cF_A$ and $\cF_B$ given by Lemma~\ref{lem:partitions:property}. 

Recall from Definition~\ref{def:events} that the event $\cA$ holds if the conclusion of Lemma~\ref{lem:bipartite:lemma} holds for both $A$ and $B$, and the degrees and co-degrees of both graphs are all not too much larger than expected. Recall also from~\eqref{def:tau} that $\tau(U)$ is the smallest number of fibres $F \in \cF$ that cover $U$, and for each $X \in \{A,B\}$, let us write $\tau_X(U) = |\pi_X(U)|$ for the number of fibres $F \in \cF_X$ that intersect $U$. In the statement of the following lemma $c > 0$ is a sufficiently small absolute constant (depending on $\eps$ but not on $C$), and we set $\{X,Y\} = \{A,B\}$, so if $X = A$ then $Y = B$, and vice versa.

\begin{lemma}\label{lem:structural:lemma} 
If the event $\cA$ holds, then for every set $I \subset V(G)$ with $|I| = k$, one of the following holds:
\begin{itemize}
\item[$(a)$] There is a set $U \subset I$ with $|U| \ge 2k/3$ and $\tau(U) \le (\log k)^6$.\smallskip 
\item[$(b)$] There is a set $W \subset I$ with $|W| \ge k/3$ such that
$$\tau_A(W) + \tau_B(W) \le \frac{k}{4}  \qquad \text{and} \qquad \tau\pig( I \setminus W \pig) \le (\log k)^6.$$
\item[$(c)$] There exists a vertex $v \in V(G)$ such that 
$$\tau\big( I \setminus N_{G^*}(v) \big) \le 3pm.$$
\item[$(d)$] There exist $W \subset I$ and $X \in \{A,B\}$ such that $W$ is an $X$-transversal, and
$$o_Y(W) \ge ck^2.$$
\item[$(e)$] There exists $X \in \{A,B\}$ and disjoint sets $S,T \subset I$, with
$$|S| \ge pm \qquad \text{and} \qquad |T| \ge ck,$$ 
such that $S \cup T$ is an $X$-transversal, and moreover 
$$o_Y(S \cup T) \ge \frac{|S||T|}{32} \qquad \text{and} \qquad \tau\pig( I \setminus N_Y(v) \pig) \le 3|S|$$
for some $v \in V(G)$.
\end{itemize}  
\end{lemma}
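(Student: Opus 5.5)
The plan is to assume that (a), (b), (c) all fail and then produce either (d) or (e), using the event $\cB$ (the conclusion of Lemma~\ref{lem:bipartite:lemma}) as the only source of open pairs and $\cD$ to control neighbourhood sizes. Throughout, $\tau$ counts fibres; note that $|N_X(v)|$ is a union of at most $\Delta(X) \le 2pm$ fibres of $\cF_X$, so any set contained in a neighbourhood is covered by $2pm$ fibres.

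\emph{Step 1: remove heavy fibres.} Let $U \subset I$ be the union of $I \cap F$ over all fibres $F \in \cF$ with $|I \cap F| \ge k/(\log k)^4$. There are at most $2(\log k)^4 \le (\log k)^6$ such fibres, so $\tau(U) \le (\log k)^6$. Since (a) fails, $|U| < 2k/3$, and hence $W_0 = I \setminus U$ has $|W_0| \ge k/3$. Moreover $\tau(I \setminus W_0) \le (\log k)^6$, so since (b) fails we get $\tau_A(W_0) + \tau_B(W_0) > k/4$. Therefore, for some $X \in \{A,B\}$, $W_0$ meets at least $k/8$ fibres of $\cF_X$. Choosing one vertex from each such fibre gives an $X$-transversal $W \subset W_0$ with $|W| \ge k/8$. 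Because $W \subset W_0$ avoids heavy fibres, we can also pass to a subset of size $\Theta(k)$ that is spread in the $Y$-partition as well: each $Y$-fibre contains at most $k/(\log k)^4$ vertices of $W$, so $\lambda(W') \ll 1/(\log m)^2$ for every $W' \subset W$ of size $\Omega(k)$, using $\log m = \Theta(\log k)$.

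\emph{Step 2: apply $\cB$ for $Y$ to halves of $W$.} Split $W$ randomly into halves $S_0, T_0$. If condition~\eqref{eq:both:nbhds:big} (with $A$ replaced by $Y$) holds for every $v$, then $o_Y(W) \ge |S_0||T_0|/32 = \Omega(k^2)$, which gives (d). Otherwise there is a vertex $v$ with $|N_Y(v) \cap S_0| \ge \eps|S_0|$ and $|N_Y(v) \cap T_0| \ge \eps|T_0|$, that is, $v$ has $Y$-neighbourhood meeting a constant fraction of $W$.

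The main work, and the step I expect to be hardest, is to iterate this into (e) or (c). Set $S_1 = N_Y(v) \cap W$. This is a $X$-transversal of size at least $\eps k/8$, and it lies inside $N_Y(v)$, which is covered by $2pm$ $Y$-fibres. Consider $R = I \setminus N_Y(v)$. If $\tau(R) \le 3pm$, then since $N_{G^*}(v) \supseteq N_Y(v)$ we have $\tau(I \setminus N_{G^*}(v)) \le \tau(R) \le 3pm$, which is (c). So we may assume $R$ is not covered by $3pm$ fibres; in particular, after discarding heavy fibres, $R$ meets many fibres, and we can extract a spread transversal $T \subset R$. We then want to apply $\cB$ to a pair $(S,T)$, where $S \subset S_1$ has size about $pm$ (or larger) and $T$ has size at least $ck$. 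If~\eqref{eq:both:nbhds:big} holds for this pair, we get $o_Y(S \cup T) \ge |S||T|/32$, and the bound $\tau(I \setminus N_Y(v)) \le 3|S|$ must be arranged by choosing $|S|$ comparable to $\tau(R)$; this is why $|S| \ge pm$ appears rather than $|S| = \Theta(k)$. If instead some vertex $v'$ again has large neighbourhood in both parts, we replace $v$ by $v'$ and recurse, enlarging the part of $I$ covered by a neighbourhood. Here the codegree bound $\Delta_2 \le \log m$ from $\cD$ ensures that two distinct neighbourhoods share few fibres, so each step genuinely covers new fibres and the process terminates in (c) or (e) after $O(1)$ rounds.

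Care will be needed to keep $S \cup T$ a single $X$-transversal (taking one vertex per $X$-fibre throughout) and to choose the threshold for $|S|$ relative to $\tau(I \setminus N_Y(v))$ so that the factor $3$ comes out. If the recursion proves awkward, my fallback is to choose $v$ maximizing $|N_Y(v) \cap W|$ from the start, and to derive the final dichotomy between (c) and (e) directly from that extremal choice.
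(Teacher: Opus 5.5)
Your Steps 1 and 2 are sound and essentially follow the paper. Stripping every fibre that meets $I$ in at least $k/(\log k)^4$ vertices is a harmless substitute for Observation~\ref{obs:spread:and:fewfibes}, and Step 2 is exactly Claim~\ref{claim:structural:e}.

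The gap is in the final step, where you assign the roles of $S$ and $T$ the wrong way round. You take $S \subset N_Y(v) \cap W$ and $T \subset R = I \setminus N_Y(v)$, but then neither size requirement of $(e)$ can be guaranteed. First, $|S| \le |N_Y(v) \cap W|$, which you only know is of order $\eps k$. Nothing prevents $\tau(R)$ from exceeding $3|N_Y(v) \cap W|$, so ``choosing $|S|$ comparable to $\tau(R)$'' is not available. Second, when $(c)$ only just fails, $R$ may meet only $O(pm) = O(k/\log k)$ fibres of $\cF_X$. Then no $X$-transversal $T \subset R$ has $|T| \ge ck$.

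The fix is to swap them. Set $Q = W_0 \setminus N_Y(v)$, and choose $S \subset Q$ and $T \subset N_Y(v) \cap W_0$ with $S \cup T$ an $X$-transversal, $|S| \ge \tau_X(Q)/2$ and $|T| \ge \tau_X(N_Y(v) \cap W_0)/2 \ge ck$ by Step 2 (split the shared $X$-fibres in half). Then $\tau(I \setminus N_Y(v)) \le \tau_X(Q) + \tau(U) \le 2|S| + 2(\log k)^4 \le 3|S|$ holds automatically. Moreover, $|S| \ge pm$ is precisely the failure of $(c)$: if $\tau_X(Q) \le 2pm$, then $\tau(I \setminus N_{G^*}(v)) \le \tau(Q) + \tau(U) \le 3pm$.

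You should also drop the recursion on $v$. Both $(c)$ and $(e)$ need a \emph{single} vertex whose neighbourhood contains all of $I$ outside a few fibres. Replacing $v$ by $v'$ therefore accumulates nothing, and the claimed termination after $O(1)$ rounds has no argument behind it. The right use of the codegree bound is to show that, with the orientation above, no bad vertex $v'$ exists, so $\cB$ applies in one step:
\begin{itemize}
\item If $\pi_Y(u) = \pi_Y(v)$, then $N_Y(u) = N_Y(v)$ is disjoint from $S$.
\item Otherwise $N_Y(u) \cap T \subset N_Y(u) \cap N_Y(v) \cap W_0$ lies in at most $\Delta_2(Y) \le \log m$ fibres of $\cF_Y$. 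Each of these meets $W_0$ in fewer than $k/(\log k)^4$ vertices, so $|N_Y(u) \cap T| \le |T|/\log m \le \eps |T|$, since $|T| \ge ck$.
\end{itemize}
Your spreadness bound still applies, since $|S| \ge pm = \Theta(k/\log k)$ gives $\lambda(S) = O((\log k)^{-3})$. The event $\cB$ then yields $o_Y(S \cup T) \ge |S||T|/32$, which is case $(e)$.
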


We will use the following easy observation in the proof of Lemma~\ref{lem:structural:lemma}.

\begin{observation}\label{obs:spread:and:fewfibes}
Let $I \subset V(G)$ with $|I| \le k$, and let $\lambda > 0$. There exists a partition $I = U \cup W$ such that 
$$\tau(U) \le \frac{\log k}{\lambda} + 1 \qquad \text{and} \qquad \lambda(W) \le \lambda.$$
\end{observation}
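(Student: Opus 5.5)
We will build $U$ greedily, peeling off heavy fibres one at a time. Set $W_0 = I$ and $U_0 = \emptyset$. At step $j$, if $\lambda(W_j) > \lambda$, choose a fibre $F \in \cF$ with $|W_j \cap F| > \lambda |W_j|$. Then set $U_{j+1} = U_j \cup (W_j \cap F)$ and $W_{j+1} = W_j \setminus F$. We stop at the first step $J$ at which either $\lambda(W_J) \le \lambda$ or $W_J = \emptyset$; in the empty case the condition $\lambda(W_J) \le \lambda$ holds vacuously, with the convention that the maximum over an empty set is $0$. We output $U = U_J$ and $W = W_J$. By construction $I = U \cup W$ is a partition, $\lambda(W) \le \lambda$, and $U$ is covered by the $J$ chosen fibres, so $\tau(U) \le J$.

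It remains to bound $J$. Each step removes more than a $\lambda$-fraction of the current set, so $|W_{j+1}| < (1-\lambda)|W_j|$. Hence $|W_j| < (1-\lambda)^j k \le e^{-\lambda j} k$ whenever $W_j$ has been reduced $j$ times. Since each step before stopping is performed on a nonempty set, we have $|W_{J-1}| \ge 1$. This gives $1 \le |W_{J-1}| \le e^{-\lambda(J-1)}k$, so $J - 1 \le (\log k)/\lambda$. That is exactly the claimed bound $\tau(U) \le \frac{\log k}{\lambda} + 1$.

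There is no real obstacle here. The only points needing care are the off-by-one in the counting, which is handled by applying the decay bound to the last nonempty set $W_{J-1}$ rather than to $W_J$, and the degenerate cases. If $\lambda \ge 1$, the process stops immediately because $\lambda(S) \le 1$ always. If $I$ is empty, $U = \emptyset$ works trivially.
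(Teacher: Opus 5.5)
Your proposal is correct and is essentially the paper's own argument: you greedily move sets $F \cap W$ from heavy fibres into $U$ until $\lambda(W) \le \lambda$ (using $\lambda(\emptyset) = 0$), and you bound the number of steps by applying $|W| \le (1-\lambda)^{t}|I| \le e^{-\lambda t}k$ to the last nonempty set, which is exactly how the paper obtains the $+1$. Your explicit handling of the degenerate cases (including $J = 0$, where $U = \emptyset$) is slightly more careful than the paper's write-up, but it does not change the argument.
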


\begin{proof}
Starting with $W = I$ and $U = \emptyset$, we repeatedly move sets of the form $F \cap W$ for some fibre $F \in \cF$ from $W$ to $U$, until we obtain a suitable partition of $I$. To be precise, recall that if $\lambda(W) > \lambda$, then there exists a fibre $F$ with $|F \cap W| \ge \lambda |W|$. Move the elements of this set from $W$ to $U$, and repeat until $\lambda(W) \le \lambda$. Noting that $\lambda(\emptyset) = 0$, if there are $t+1$ such steps in total, then 
$$\tau(U) \le t + 1 \qquad \text{and} \qquad 1 \le |W| \le (1 - \lambda)^t |I| \le e^{- \lambda t} k$$
after $t$ steps, so $t \le \lambda^{-1} \log k$, as claimed. 
\end{proof}

We can now prove the structural lemma.

\begin{proof}[Proof of Lemma~\ref{lem:structural:lemma}]
Let $I = U \cup W$ be the partition given by Observation~\ref{obs:spread:and:fewfibes} applied to the set $I$ with $\lambda = (\log m)^{-4}$, and note that    
$$\tau(U) \le (\log k)^6 \qquad \text{and} \qquad \lambda(W) \le \frac{1}{(\log m)^4},$$ 
since $m \le k^2$. If $|U| \ge 2k/3$, then we are in case $(a)$, so we may assume that $|W| \ge k/3$. We are then in case $(b)$ if $\tau_A(W) + \tau_B(W) \le k/4$, so we may assume that $\tau_A(W) \ge k/8$. 

\begin{claim}\label{claim:structural:e}
Either there exists $v \in V(G)$ such that
\begin{equation}\label{eq:structural:e}
\tau_A\big( N_B(v) \cap W \big) \ge 2ck,
\end{equation}
or we are in case $(d)$. 
\end{claim}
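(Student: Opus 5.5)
We work inside the proof of Lemma~\ref{lem:structural:lemma}: we have $W \subset I$ with $|W| \ge k/3$, $\lambda(W) \le (\log m)^{-4}$ and $\tau_A(W) \ge k/8$, and the event $\cA$ holds. The plan is to extract a large $A$-transversal $W' \subset W$ and apply the event $\cB$ (the conclusion of Lemma~\ref{lem:bipartite:lemma} for $B$) to two halves of it, assuming that~\eqref{eq:structural:e} fails for every $v$.

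\textbf{Step 1: the transversal.} Choose one vertex of $W$ in each $A$-fibre that meets $W$. This gives an $A$-transversal $W' \subset W$ with $|W'| = \tau_A(W) \ge k/8$. We want $W'$ to stay well spread with respect to all fibres. Its $A$-fibre intersections have size at most one, so only the $B$-fibres need care. To handle them, choose the representative in each $A$-fibre uniformly at random. Then for each $B$-fibre $F$ we have
\[
\Ex|W' \cap F| \le |W \cap F| \le (\log m)^{-4}|W| .
\]
This alone is not enough, because $|W|$ may exceed $|W'|$ by a lot. The fix is to instead take $W'$ to be a subset of the transversal that is greedily trimmed. We first discard from $W'$ all vertices lying in $B$-fibres that contain more than $|W'|/(\log m)^{3}$ of its vertices. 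There are at most $(\log m)^3$ such heavy fibres. If they contain at least half of $W'$, then a set of $\Omega(k)$ vertices of $I$ is covered by $(\log m)^3 \le (\log k)^6$ fibres.

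This is the delicate point. That case must be merged with the earlier application of Observation~\ref{obs:spread:and:fewfibes}. The cleanest route is to rerun the Observation with $\lambda = (\log m)^{-3}$ on the transversal itself, and to put the extracted $B$-fibre pieces into $U$. We then re-check cases~$(a)$ and~$(b)$, adjusting constants; this may force the threshold $k/8$ to become something like $k/16$. I expect this bookkeeping, which makes sure that removing fibres does not destroy the lower bound on $\tau_A$, to be the main obstacle. In the end we want an $A$-transversal $W'' \subset W$ with $|W''| \ge k/20$ and $\lambda(W'') \le (\log m)^{-3}$.

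\textbf{Step 2: split and apply $\cB$.} Split $W''$ arbitrarily into halves $S$ and $T$ with $|S|,|T| \ge k/41$. Then
\[
\lambda(S) + \lambda(T) \le 4(\log m)^{-3} \le (\log m)^{-2}.
\]
Suppose~\eqref{eq:structural:e} fails for every $v$, that is, $\tau_A(N_B(v) \cap W) < 2ck$. Since $S \cup T$ is an $A$-transversal, we get $|N_B(v) \cap S| \le \tau_A(N_B(v)\cap W) < 2ck \le \eps |S|$ once $c \le \eps/100$. So the neighbourhood condition in~\eqref{eq:both:nbhds:big} (with $B$ in place of $A$) holds for every $v \in V(G)$. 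Since $\cB$ holds, $o_B(S \cup T) \ge |S||T|/32 \ge ck^2$ for $c$ small. Hence $W = S \cup T$ is an $A$-transversal with $o_B(W) \ge ck^2$, which is case~$(d)$ with $X = A$ and $Y = B$.

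The only non-routine part is Step~1: it must produce a spread transversal of linear size. If that fails, we fall back to~$(a)$ or~$(b)$ by enlarging $U$ by at most $(\log m)^3$ further fibres, which keeps $\tau(U) \le (\log k)^6$ after adjusting the exponent in the Observation.
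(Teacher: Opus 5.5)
Your Step 2 is exactly the paper's argument. You take disjoint $S,T\subset W$ whose union is an $A$-transversal, and you note that if \eqref{eq:structural:e} fails then $|N_B(v)\cap S| = \tau_A(N_B(v)\cap S) \le \tau_A(N_B(v)\cap W) < 2ck \le \eps|S|$. You then apply $\cB$ for $B$ and land in case $(d)$. The gap is in Step 1, and it comes from a false assertion. You say that $|W|$ ``may exceed $|W'|$ by a lot'', but $W\subset I$, so $|W|\le k \le 8\tau_A(W) = 8|W'|$. Hence no randomisation, trimming or re-running of Observation~\ref{obs:spread:and:fewfibes} is needed. Since $\tau_A(W)\ge k/8$, you can take disjoint $S,T\subset W$ with $S\cup T$ an $A$-transversal and $|S|=|T|=k/16$. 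Then for every fibre $F\in\cF$ (of either partition)
\[
|S\cap F| \le |W\cap F| \le \lambda(W)\,|W| \le \frac{k}{(\log m)^4} = \frac{16\,|S|}{(\log m)^4},
\]
and likewise for $T$. So $\lambda(S)+\lambda(T) \le 32(\log m)^{-4} \le (\log m)^{-2}$. In particular, the heavy $B$-fibres you worry about cannot exist.

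As written, your Step 1 is not a proof. The fallback is left as unfinished bookkeeping, and it would not work as described. Moving the roughly $k/16$ vertices in heavy fibres into $U$ does not put you in case $(a)$, which requires $|U|\ge 2k/3$. Modifying $U$ and $W$ also changes the objects the claim is about. You would then have to re-verify $(b)$ and the bound $\tau_A(W)\ge k/8$ for the new $W$, and possibly iterate, with no control on how many fibres accumulate in $U$. Replace Step 1 by the one-line bound above and your proof coincides with the paper's.
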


\begin{proof}[Proof of Claim~\ref{claim:structural:e}]
Since $\tau_A(W) \ge k/8$, we may choose disjoint sets $S,T \subset W$ such that $S \cup T$ is an $A$-transversal and $|S| = |T| = k/16$. We claim that if~\eqref{eq:structural:e} does not hold, then the conditions of property~$\cB$ (that is, of Lemma~\ref{lem:bipartite:lemma}) 
are satisfied by the pair $(S,T)$. 

To see this, observe first that, since $\lambda(W) \le (\log m)^{-3}$, we have 
$$|S \cap F| \le |W \cap F| \le \frac{|W|}{(\log m)^3} \le \frac{16|S|}{(\log m)^3} \ll \frac{|S|}{(\log m)^2},$$
for every fibre $F \in \cF$, so $\lambda(S) \ll (\log m)^{-2}$, and similarly for $T$. Moreover, for each $v \in V(G)$, if~\eqref{eq:structural:e} does not hold, then
$$\frac{|N_B(v) \cap S|}{|S|} = \frac{\tau_A\pig( N_B(v) \cap S \pig)}{k/16} \le \frac{\tau_A\pig( N_B(v) \cap W \pig)}{k/16} \le 32c \le \eps,$$
since $S$ is an $A$-transversal and $c$ is sufficiently small. Since $\cB$ holds, it follows that 
$$o_B(S \cup T) \ge \frac{|S||T|}{32} \ge \frac{k^2}{2^{13}},$$
and hence we are in case $(d)$, as claimed.
\end{proof}

We may therefore assume that~\eqref{eq:structural:e} holds for some $v \in V(G)$. Set
$$Q = W \setminus N_B(v),$$
and observe that if $\tau_A(Q) \le 2pm$, then we are in case $(c)$, since 
$$I \setminus N_{G^*}(v) \subset Q \cup U \qquad \text{and} \qquad \tau(Q) + \tau(U) \le \tau_A(Q) + (\log k)^6 \le 3pm.$$ 
It therefore remains to show that if $\tau_A(Q) \ge 2pm$, then we are in case $(e)$.  

To do so, let us first choose disjoint sets $S \subset Q$ and $T \subset N_B(v) \cap W$ with
$$|S| \ge \frac{\tau_A(Q)}{2} \ge pm \qquad \text{and} \qquad |T| \ge \frac{\tau_A\pig( N_B(v) \cap W \pig)}{2} \ge ck$$
such that $S \cup T$ is an $A$-transversal, and observe that
$$\tau\pig( I \setminus N_B(v) \pig) \le \tau_A(Q) + \tau(U) \le 2|S| + (\log k)^6 \le 3|S|.$$
It will therefore suffice to show that $o_B(S \cup T) \ge |S||T|/32$. Since the event $\cB$ holds, to do so we only need to check that the sets $S$ and $T$ satisfy the conditions of Lemma~\ref{lem:bipartite:lemma}. To see this, observe first that
$$|S \cap F| \le |W \cap F| \le \frac{|W|}{(\log m)^4} \ll \frac{|S|}{(\log m)^2},$$
for every fibre $F \in \cF$, since $\lambda(W) \le (\log m)^{-4}$ and $|S| \ge pm = \Theta\pig( k/\log k \pig)$, and
$$|T \cap F| \le |W \cap F| \le \frac{|W|}{(\log m)^4} \ll \frac{|T|}{(\log m)^2}$$
since $|T| \ge ck$. Finally, observe that if $u \in V(G)$ and $\pi_B(u) \ne \pi_B(v)$, then
$$\frac{|N_B(u) \cap T|}{|T|} \le \frac{|N_B(u) \cap N_B(v) \cap W|}{ck} \le \frac{\lambda(W) \cdot \Delta_2(B)}{c} \le \frac{1}{\log m}$$
since $T \subset N_B(v) \cap W$, the event $\cD$ holds (so $\Delta_2(B) \le \log m$), and $\lambda(W) \le (\log m)^{-3}$. On the other hand, if $\pi_B(u) = \pi_B(v)$, then
$$|N_B(u) \cap S| = 0,$$
since $S \subset W \setminus N_B(v) = W \setminus N_B(u)$. It follows that the conditions of property $\cB$ are satisfied, and hence we have $o_B(S \cup T) \ge |S||T|/32$, as required.
\end{proof}

\section{Proof of Theorem~\ref{thm:acmgraph}}\label{final:proof:sec}

In this final section we will complete the proof of Theorem~\ref{thm:acmgraph} by showing that, with high probability, the random graph $G$ constructed in Section~\ref{sec:construction} has the claimed number of independent sets of size $k$. To do so, we will consider separately each of the five cases in Lemma~\ref{lem:structural:lemma}; in cases $(a)$--$(c)$ the counting will be deterministic, while in cases $(d)$ and $(e)$ we will use Lemma~\ref{lem:manyopen:probbound} to bound the expected number of independent sets with the given properties. We begin with case~$(a)$, where the counting is very easy. 

\begin{lemma}\label{lem:counting:a}
There are at most ${pn \choose k}$ sets $I \subset V(G)$ with $|I| = k$ that contain a set $U$ with 
$$|U| \ge \frac{2k}{3} \qquad \text{and} \qquad \tau(U) \le (\log k)^6.$$
\end{lemma}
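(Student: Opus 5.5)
The plan is a direct union bound over the possible choices of the covering fibres, followed by a comparison of binomial coefficients.

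\emph{Step 1: fix the cover.} Set $t = \lfloor (\log k)^6 \rfloor$. Every such $I$ is determined by three choices:
\begin{itemize}
\item a family of $t$ fibres $F_1,\ldots,F_t \in \cF$ whose union $D$ contains $U$;
\item the set $I \cap D$, which has some size $j \ge 2k/3$;
\item the remaining $k - j$ vertices of $I$, chosen anywhere in $V(G)$.
\end{itemize}
Since $|\cF| = 2m \le k^2$, there are at most $(2m)^{t} = e^{O((\log k)^7)}$ choices of fibres. Each fibre has size $n/m$, so $|D| \le tn/m$. Hence the number of sets is at most
$$e^{O((\log k)^7)} \sum_{j \ge 2k/3} {tn/m \choose j}{n \choose k-j}.$$

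\emph{Step 2: bound each term.} Recall that $m = 1/(8p^2)$, so $tn/m = 8tp^2 n$, which is smaller than $pn$ by the factor $8tp = O((\log k)^7/k)$. Using ${a \choose b} \le (ea/b)^b$ for each factor, and ${pn \choose k} \ge (pn/k)^k$, the ratio of a term to ${pn\choose k}$ is at most
$$\Big(\frac{8etp^2n}{j}\cdot\frac{k}{pn}\Big)^{j}\Big(\frac{e n}{k-j}\cdot\frac{k}{pn}\Big)^{k-j}.$$
Substituting $p = C\log k/k$ and $t = (\log k)^6$, this is
$$\Big(O\big((\log k)^7/k\big)\Big)^{j}\,\Big(O\big(k/\log k\big)\Big)^{k-j}.$$

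\emph{Step 3: conclude.} Since $j \ge 2k/3 \ge 2(k-j)$, the total is at most $k^{-j}k^{k-j}(\log k)^{7j} 2^{O(k)} \le k^{-k/3 + o(k)}$. This decay easily absorbs the $e^{O((\log k)^7)}$ factor from the choice of fibres and the $k$ terms of the sum, leaving a bound of ${pn\choose k}$.

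The only points needing care are that $n/m$ and $pn$ are large enough for the binomial estimates to apply, which holds since $n \ge k^2/\log k$ and $m = \Theta(k^2/(\log k)^2)$, and the case $k-j=0$, which is trivial. I expect no real obstacle beyond this bookkeeping; the essential input is that $tp = o(1)$ polynomially, which comes from the choice $m = \Theta(p^{-2})$.
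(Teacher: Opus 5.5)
Your proof is correct and is essentially the paper's argument: a union bound over the $(\log k)^6$ covering fibres, the vertices of $I$ inside them and the remaining vertices, compared with $\binom{pn}{k}$ via $\binom{a}{b} \le (ea/b)^b$. The paper chooses exactly $2k/3$ vertices inside the fibres and $k/3$ arbitrary ones, which avoids your sum over $j$. That sum causes one small slip: in Step 2 the factor $\frac{ek}{(k-j)p}$ is $O(k/\log k)$ only when $k-j = \Omega(k)$. Since $(k/(k-j))^{k-j} \le e^{k/e}$, the extra loss is absorbed by the $2^{O(k)}$ in Step 3, so your conclusion stands.
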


\begin{proof}
We first choose $\ell = (\log k)^6$ fibres, then $2k/3$ vertices contained in these fibres, and then $k/3$ additional vertices. The number of choices is thus at most
$${2m \choose \ell} {\ell n / m \choose 2k/3} {n \choose k/3} \le 2^{O(k)} \bigg( \frac{p^4 n^2 (\log k)^{12}}{k^2} \cdot \frac{n}{k} \bigg)^{k/3} \le {pn \choose k},$$
as claimed, since $m = \Theta\pig( p^{-2} \pig)$, $k \gg \ell \log k$ and $p \le (\log k)^{-13}$. 
\end{proof}

We next deal with case~$(c)$, which is also easy; note that there are at most $m^2$ choices for the neighbourhood $N_{G^*}(v)$, since it is determined by the fibres containing $v$. 

\begin{lemma}\label{lem:counting:c}
If the event $\gap\cD$ holds, then for each $v \in V(G)$, there are at most 
$$2^{5k} {pn \choose k}$$ 
$k$-sets $I \subset V(G)$ with $\tau\big( I \setminus N_{G^*}(v) \big) \le 4pm$.
\end{lemma}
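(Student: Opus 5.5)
Fix $v \in V(G)$. The plan is to count the $k$-sets $I$ by first guessing how $I$ splits into the part inside $N_{G^*}(v)$ and the part outside it, and then choosing each part separately.

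First, bound the size of the neighbourhood. Since $E(G^*) = E(A^*) \cup E(B^*)$, we have $N_{G^*}(v) = N_A(v) \cup N_B(v)$. The set $N_A(v)$ is the union of the $A$-fibres $F_A^{(j)}$ over $j \in N_A(\pi_A(v))$, so on the event $\cD$ it consists of at most $2pm$ fibres of size $n/m$. Hence $|N_A(v)| \le 2pn$, and similarly $|N_B(v)| \le 2pn$. It follows that $|N_{G^*}(v)| \le 4pn$.

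Next, let $I$ be a $k$-set with $\tau\big( I \setminus N_{G^*}(v) \big) \le 4pm$, and write $j = |I \setminus N_{G^*}(v)|$. The set $I \setminus N_{G^*}(v)$ is contained in some $t \le 4pm$ fibres from $\cF$. We count as follows:
\begin{itemize}
\item choose $j \le k$ (at most $k+1 \le 2^k$ options);
\item choose the $t = \lfloor 4pm \rfloor$ covering fibres, in at most $\binom{2m}{4pm}$ ways;
\item choose $j$ vertices in the union of these fibres, which has size at most $4pm \cdot n/m = 4pn$;
\item choose $k-j$ vertices of $N_{G^*}(v)$, which also has size at most $4pn$.
\end{itemize}
The last two steps together give at most $\binom{4pn}{j}\binom{4pn}{k-j} \le \binom{8pn}{k}$ choices, by Vandermonde's identity. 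Moreover
$$\binom{8pn}{k} \le 8^k \binom{pn}{k} \cdot O(1)^k,$$
which holds since $pn \ge k$. Indeed, $pn = C n \log k / k \ge Ck/\,\cdot$ and $n \ge k^2/\log k$ give $pn \ge Ck$, so $\binom{8pn}{k} \le (8pn e/k)^k$ while $\binom{pn}{k} \ge (pn/k)^k$.

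The main obstacle is the factor $\binom{2m}{4pm}$ for the choice of fibres. Here $pm = 1/(8p) = k/(8C\log k)$, so
$$\binom{2m}{4pm} \le \big( e/(2p) \big)^{4pm} = \exp\big( O(pm \log k) \big) = \exp\big( O(k/C) \big) \le 2^{k}$$
for $C$ large. This is exactly why $m = \Theta(p^{-2})$ is needed, so that $pm \log(1/p) = O(k/C)$.

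The remaining work is to multiply the constants: $2^k \cdot 2^k \cdot (8e)^k$-type factors. These exceed $2^{5k}$ a little, so I would sharpen the count by choosing the $j$ outside vertices together with the inside ones within a single set of size $8pn$ and using $\binom{8pn}{k} \le 8^k\binom{pn}{k}$ directly, which is valid as $pn \gg k$ gives ratio $\prod_i (8pn-i)/(pn-i) \le 8^k(1+o(1))^k$. This yields $2^{k}\cdot 2^{o(k)}\cdot 2^{3k} \le 2^{5k}$ after absorbing the sum over $j$.
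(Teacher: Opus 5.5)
Your proof is correct and is essentially the paper's argument: pick $\lfloor 4pm\rfloor$ covering fibres in $\binom{2m}{4pm} \le 2^k$ ways, use $|N_{G^*}(v)| \le 4pn$ on $\cD$ and Vandermonde to get at most $\binom{8pn}{k}$ choices for the vertices (which also absorbs the sum over $j$), and then compare $\binom{8pn}{k}$ with $\binom{pn}{k}$. One small correction: $n$ may be as small as $k^2/\log k$, so you only have $pn \ge Ck$ rather than $pn \gg k$, and the ratio $\prod_{i<k}(8pn-i)/(pn-i)$ is bounded by $(8/(1-1/C))^k \le 16^k$ rather than $8^k(1+o(1))^k$; together with the $2^k$ for the fibres this still gives the claimed $2^{5k}$.
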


\begin{proof} 
We need to choose $t$ vertices of $N_{G^*}(v)$, $4pm$ fibres, and $k-t$ vertices contained in these fibres. Since $4pm = (2p)^{-1} \le k / \log k$, the number of choices is at most
$${2m \choose 4pm} \sum_{t = 0}^k {4pn \choose t} {4pn \choose k - t} \le 2^{k}\binom{8pn}{k} \le 2^{5k} {pn \choose k},$$
as claimed.
\end{proof}

For case~$(b)$, we will need the property of the random partitions that was guaranteed in Lemma~\ref{lem:partitions:property}. 

\begin{lemma}\label{lem:counting:b}
There are at most ${pn \choose k}$ sets $I \subset V(G)$ with $|I| = k$ that contain a set $W$ with 
$$|W| \ge \frac{k}{3}, \qquad \tau_A(W) + \tau_B(W) \le \frac{k}{4} \qquad \text{and} \qquad \tau\pig( I \setminus W \pig) \le (\log k)^6.$$ 
\end{lemma}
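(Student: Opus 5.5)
The plan is a direct count: first choose the part $W$ using the pseudorandom property of the partitions from Lemma~\ref{lem:partitions:property}, then choose $I \setminus W$ inside a few fibres, as in Lemma~\ref{lem:counting:a}. The point to check is that the saving from $W$ lying in few fibres of \emph{both} partitions, together with the large saving on $I\setminus W$, beats the target ${pn \choose k} \ge (pn/k)^k$.

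\textbf{Setup.} Fix $w = |W|$ with $k/3 \le w \le k$, and set $s = k/4$. Every admissible $W$ meets at most $s$ fibres of each of $\cF_A$ and $\cF_B$. By Lemma~\ref{lem:partitions:property}, with $t = w$, the number of choices of $W$ is at most
$$2^{4w} \bigg( \frac{s}{m} \bigg)^{2w-2s} {n \choose w}.$$
Since $m = 1/(8p^2)$ and $pk = C\log k$, we have $s/m = 2kp^2 = k^{-1+o(1)}$, and $2w - 2s = 2w - k/2 \ge w/2 > 0$. So this count is at most
$$2^{O(k)} \Big(\frac{n}{w}\Big)^w k^{-(1-o(1))(2w - k/2)}.$$

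\textbf{Choosing the rest.} Next choose $\ell = (\log k)^6$ fibres, in at most ${2m \choose \ell} = k^{O(\ell)} = 2^{o(k)}$ ways. Then choose the remaining $k-w$ vertices inside these fibres, in at most
$${\ell n/m \choose k-w} \le \bigg(\frac{e\ell n/m}{k-w}\bigg)^{k-w}$$
ways. Per vertex this is roughly $\ell p^2 n/k$, compared with the target $pn/k$, a saving factor of $\ell p = k^{-1+o(1)}$. When $w = k$ this term is simply $1$.

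\textbf{Combining.} Multiply the counts and compare with $(pn/k)^k$:
\begin{itemize}
\item The $W$-part contributes a ratio $(k/(pw))^w \cdot k^{-(2w-k/2)(1-o(1))} = k^{k/2 - w + o(k)}$, using $k/(pw) = k^{o(1)}$ since $w \ge k/3$.
\item The remaining part contributes $k^{-(1-o(1))(k-w)}$.
\end{itemize}
The total ratio is therefore $k^{-k/2 + o(k)}$, which absorbs all the $2^{O(k)}$ factors and the sum over $w$. Hence the total count is at most ${pn \choose k}$.

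\textbf{Main obstacle.} The delicate step is the exponent bookkeeping. The saving from $(s/m)^{2w-2s}$ alone gives only about $k^{w - k/2}$, which is not enough on its own when $w$ is near $k/3$. One must combine it with the $k^{-(k-w)}$ saving on $I \setminus W$ and check that the sum $k^{-k/2}$ is uniform in $w$. One must also verify that every $o(1)$ in the exponents, coming from the powers of $\log k$ in $p$, $\ell$ and $C$, is genuinely $o(1)$ and does not spoil this margin.
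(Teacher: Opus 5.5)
Your proposal is correct and is essentially the paper's proof: the paper also chooses $\ell=(\log k)^6$ fibres covering $I\setminus W$, then the $k-|W|$ vertices inside them, and bounds the number of choices of $W$ by Lemma~\ref{lem:partitions:property} with $s=k/4$, reaching a saving of $p^{k/2}(\log k)^{O(k)} = k^{-k/2+o(k)}$ just as you do. Two small bookkeeping slips need fixing. First, $k/(pw)$ is $k^{1-o(1)}$, not $k^{o(1)}$; your exponent $k/2-w$ already uses the correct value. Second, the remaining part should be bounded by $k^{-(k-w)+o(k)}$ uniformly in $w$, rather than by $k^{-(1-o(1))(k-w)}$, because the per-vertex saving degrades when $x=k-w$ is small. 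The uniform bound holds since $\big(8e\ell pk^2/x\big)^x \le \big(12e\ell pk\big)^{2k/3} = (\log k)^{O(k)}$ for all $x\le 2k/3$.
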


\begin{proof}
We need to choose $\ell = (\log k)^6$ fibres covering $I \setminus W$, a set of $t \le 2k/3$ elements of these fibres, and a set $W$ of size $k - t$ that intersects at most $k/4$ fibres of each partition. By Lemma~\ref{lem:partitions:property}, the number of choices is at most
$${2m \choose \ell} \sum_{t \eqa 0}^{2k/3} {\ell n / m \choose t} 2^{4(k-t)} \bigg( \frac{k}{4m} \bigg)^{2(k - t) - k/2} {n \choose k - t}.$$
Since $m = \Theta\pig( p^{-2} \pig)$ and $k \gg \ell \log k$, this is at most
$$2^{O(k)} \sum_{t \eqa 0}^{2k/3} \bigg( \frac{p^2 n (\log k)^6}{t} \cdot \frac{1}{p^4k^2} \cdot \frac{k}{n} \bigg)^t \pig( p^2k \pig)^{3k/2} {n \choose k},$$
which, since $pk = \Theta\pigl( \log k \pig)$ and $p \le (\log k)^{-13}$, is at most
$$2^{O(k)} \sum_{t \eqa 0}^{2k/3} \bigg( \frac{(\log k)^5}{pt} \bigg)^t p^{k/2} \pigl( \log k \pig)^{3k/2} {pn \choose k} \le {pn \choose k},$$
as claimed.
\end{proof}

For cases $(d)$ and $(e)$ we will need to use the randomness of $A$. We first deal with case $(d)$, which is somewhat easier. 

\begin{lemma}\label{lem:counting:d}
With high probability there are at most ${pn \choose k}$ independent $k$-sets $I \in \cI_k(G)$ with a subset $W \subset I$ such that $\gap W$ is an $A$-transversal and $o_B(W) \ge ck^2$.
\end{lemma}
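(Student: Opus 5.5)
We will bound the expected number of such sets and apply Markov's inequality. Fix the partitions and the graph $B$ (arbitrarily), and consider only the randomness of $A$. We organise the count by first choosing the witness $W$ and then extending it to $I$. By discarding edges (and vertices) if necessary, we may assume that the relevant graph is small. Let $H$ be the graph on $W$ whose edges are the $B$-open pairs. We may pass to a subset $W' \subset W$ with $|W'| \le k$ and $o_B(W') \ge ck^2$; indeed $|W| \le k$ holds automatically since $W \subset I$. Subsets of independent sets are independent, so if $I$ is independent then so is $V(H)$.

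Lemma~\ref{lem:manyopen:probbound} applies to $H$, because $V(H)=W$ is an $A$-transversal, $E(H) \subset O_B$, and $e(H) \ge ck^2 \ge k^{3/2}$. It gives
$$\Pr_A\big( W \in \cI(G) \text{ and } \Delta(A) \le 2pm \big) \le e^{-cp \cdot ck^2} = e^{-c^2 C k \log k} = k^{-c^2 C k}.$$
We split on the event $\cD$. Since $\cD$ holds with high probability by Lemma~\ref{lem:D:holds}, it suffices to bound the expected number of pairs $(I,W)$ with $I$ a $k$-set, $W \subset I$ as above, and $W$ independent with $\Delta(A) \le 2pm$. That expectation is at most
$$\sum_{W} \binom{n}{k-|W|} \cdot k^{-c^2Ck} \le 2^k \binom{n}{k} k^{-c^2 C k},$$
where we used the trivial bound $\binom{n}{k}$ on the number of $k$-sets $I$ and $2^k$ on the choices of $W \subset I$.

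It remains to compare this with $\binom{pn}{k}$:
$$\binom{n}{k}\Big/\binom{pn}{k} \le (e/p)^k \cdot 2^{O(k)} \le k^{k}.$$
This holds since $1/p = k/(C\log k) \le k$ and $pn \ge pk^2/\log k \gg k$, so $\binom{pn}{k} \ge (pn/k)^k$ while $\binom nk \le (en/k)^k$. Hence the expectation is at most
$$2^{O(k)} k^{k - c^2 C k} \binom{pn}{k} \le k^{-k}\binom{pn}{k}$$
once $C$ is large relative to $1/c^2$. Here $c$ is the absolute constant from Lemma~\ref{lem:structural:lemma}, which does not depend on $C$, and we may take the constant in Lemma~\ref{lem:manyopen:probbound} smaller if needed. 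By Markov's inequality, with probability $1 - k^{-k}$ the count is at most $\binom{pn}{k}$ on $\cD$, and combining with $\Pr(\cD^c) = o(1)$ gives the lemma.

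The only delicate point is ensuring that the exponent $cp\,e(H) \gtrsim Ck\log k$ beats the crude $k^{O(k)}$ loss from choosing $I$ among all $k$-sets rather than among $pn$-type sets. This is exactly why $p$ carries the large constant $C$, independent of $c$. No averaging over $B$ is needed, since the bound of Lemma~\ref{lem:manyopen:probbound} is uniform in $B$ and in the partitions.
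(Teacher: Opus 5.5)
Your proof is correct and follows the paper's argument essentially verbatim: for each fixed $B$ and each admissible $W$ you apply Lemma~\ref{lem:manyopen:probbound}, bound the number of pairs $(I,W)$ by $2^k\binom{n}{k}$ (the paper uses $3^k$), compare $\binom{n}{k}$ with $\binom{pn}{k}$ using $pk = C\log k$ and $C > 2/c^2$, and finish with Markov's inequality together with $\Pr(\Delta(A) > 2pm) = o(1)$. The intermediate sum $\sum_W \binom{n}{k-|W|}$ is not quite the right count, and the detour through $W'$ is unnecessary, but the bound you actually use on the number of pairs is the correct one.
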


\begin{proof}
By Lemma~\ref{lem:manyopen:probbound}, for any choice of the graph $B$ and set $W \subset V(G)$ such that $W$ is an $A$-transversal of size at most $k$ and $o_B(W) \ge ck^2$, we have 
$$\Pr_A\big( \gap W \in \cI(G) \text{ and }\gaaap \Delta(A) \le 2pm \gap \big) \le e^{-c^2pk^2}.$$
Recall from Lemma~\ref{lem:D:holds} that $\Delta(A) \le 2pm$ holds with high probability, and observe that if this bound holds, then the expected number of independent $k$-sets in $G$ that contain a set $W$ as in the lemma is at most
$$3^k {n \choose k} e^{-c^2pk^2} \le \bigg( \frac{9n}{k} \cdot e^{-c^2pk} \bigg)^k \ll {pn \choose k}$$
since $pk = C \log k$ and $C > 2/c^2$. The lemma therefore follows by Markov's inequality. 
\end{proof}

Finally, we need to deal with case~$(e)$, which is a little trickier. 

\begin{lemma}\label{lem:counting:e}
With high probability there are at most $2^{O(k)} {pn \choose k}$ independent $k$-sets in $G$ that contain disjoint sets $S$ and $T$ with
\begin{equation}\label{eq:counting:e:sizes}
|S| \ge pm \qquad \text{and} \qquad |T| \ge ck,
\end{equation}
such that $S \cup T$ is an $A$-transversal, and moreover
$$o_B(S \cup T) \ge \frac{|S||T|}{32} \qquad \text{and} \qquad \tau\pig( I \setminus N_B(v) \pig) \le 3|S|$$
for some $v \in V(G)$.
\end{lemma}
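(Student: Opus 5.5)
The plan is a first-moment count over the data that determine such a set $I$, with Lemma~\ref{lem:manyopen:probbound} supplying a gain that depends on $|S|$. By Lemma~\ref{lem:D:holds} we may restrict to outcomes of $A$ with $\Delta(A)\le 2pm$, and we also fix $B$ so that $\cD$ holds for it. The conclusion then follows from Markov's inequality once we show that the expected number of such independent sets is at most $2^{O(k)}{pn\choose k}$.

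\emph{Encoding.} We record the data in this order.
\begin{itemize}
\item The set $N_B(v)$. It depends only on $\pi_B(v)$, so there are at most $m$ choices, and $|N_B(v)|\le 2pm\cdot n/m=2pn$ by $\cD$.
\item An integer $s\ge pm$ together with $3s$ fibres covering $I\setminus N_B(v)$. There are at most ${2m\choose 3s}\le (2em/3s)^{3s}\le (e/p)^{3s}\le k^{3s}$ choices, using $s\ge pm$.
\item The integer $j=|I\setminus N_B(v)|$, the $k-j$ vertices of $I$ inside $N_B(v)$, and the $j$ vertices of $I$ in the chosen fibres.
\item The sets $S,T\subset I$ with $|S|=s$. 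There are at most $4^k$ choices.
\end{itemize}

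\emph{Probability for each configuration.} Let $H$ be the graph of $B$-open pairs inside $S\cup T$. Then $e(H)\ge s|T|/32\ge c\,pm\,k/32=\Omega(k^2/\log k)\ge k^{3/2}$, so Lemma~\ref{lem:manyopen:probbound} applies to $H$. Since $S\cup T\subset I$, we get
$$\Pr_A\big(I\in\cI(G)\text{ and }\Delta(A)\le 2pm\big)\le e^{-c\,p\,s|T|/32}\le e^{-c^2 s\,pk/32}=k^{-c^2Cs/32}.$$
Taking $C$ large, this gain absorbs the factor $k^{3s}$ from the choice of fibres and the factor $m\le k^2$ from the choice of $N_B(v)$. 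That leaves, up to $2^{O(k)}$,
$$\sum_{j}k^{-c' Cs}{2pn\choose k-j}{3sn/m\choose j}.$$

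\emph{Comparing with ${pn\choose k}$, which I expect to be the main obstacle.} Since $m=1/(8p^2)$, we have $3sn/m=24ps\cdot pn$. Because $ps$ can be as large as $pk=C\log k$, the factor ${3sn/m\choose j}$ can exceed the corresponding $(pn/k)^j$ by up to about $(24e\,ps\,k/j)^j$. When $j\gg s\log k$ this loss is only $2^{O(j)}$ relative to $(pn/k)^j$. In the worst case, however, the loss is $\max_j(a/j)^j=e^{a/e}$ with $a=\Theta(Cs\log k)$, which is also of the form $k^{O(Cs)}$. That competes directly with the gain $k^{-c^2Cs/32}$, and both exponents are linear in $C$.

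My first attempt at resolving this is to exploit $|T|$ rather than the crude bound $|T|\ge ck$. Use $e(H)\ge s|T|/32$ in full, and split the count according to $j$ and $|T|$. Note that $T\subset N_B(v)$ while $S\subset I\setminus N_B(v)$, so $j\ge s$ and $|T|\le k-j$. If $j$ is small compared with $k$, then $|T|$ can be taken to be $\Theta(k)$ with a constant we control, and the entropy loss $(24e\,psk/j)^j$ is small because $j$ is small. If instead $j$ is comparable to $k$, the set $I\setminus N_B(v)$ is large, and I would re-choose $S$ inside it to be as large as the $A$-transversal structure allows, with $|S|$ close to $\tau_A(I\setminus N_B(v))\ge j/\lambda$-type bounds. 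This makes $s$ comparable to $j/\log k$ and the loss $2^{O(k)}$. If this balancing fails, the fallback is to make $c$ in $|T|\ge ck$ effectively larger by tightening $\eps$ in Lemma~\ref{lem:structural:lemma}. That would require carefully re-checking how the constants depend on one another.
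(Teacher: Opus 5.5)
Your encoding and your use of Lemma~\ref{lem:manyopen:probbound} match the paper's proof. However, you leave the decisive comparison unresolved, and the obstacle you identify there comes from ignoring the constraint $j \le k$. The bound $\max_j (a/j)^j = e^{a/e}$ is attained at $j = a/e = 24psk$. But $s \ge pm$ gives $ps \ge p^2 m = 1/8$, so $a/e \ge 3k > k$. Since $j \mapsto (a/j)^j$ is increasing for $j \le a/e$, on the admissible range $j \le k$ the entropy loss is at most $(24e\,ps)^k$, which is not of the form $k^{\Theta(Cs)}$.

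Now split at $s = C/p$, as the paper does.
\begin{itemize}
\item If $ps \le C$, the loss is at most $(24eC)^k = 2^{O(k)}$; constants depending on the fixed $C$ are allowed in the $O(k)$. Your gain $k^{-c^2Cs/32}$ still pays for the factors $m\,k^{3s}$.
\item If $ps \ge C$, then $(24e\,ps)^k = e^{k\log(24e\,ps)} \le e^{c^2psk/64}$, because $\log(24e\,ps) \le c^2 ps/64$ once $ps \ge C$ and $C$ is large. So half of the gain $e^{-c^2psk/32}$ cancels the loss, and the other half cancels $m\,k^{3s}$ with room to spare.
\end{itemize}
Summing over the $O(k^2)$ pairs $(s,j)$ and applying Markov's inequality completes the proof. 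The paper phrases the same step as $\sum_{t \le k} (psk/t)^t \le k\,(ps)^k \le e^{c^4 psk}$ for $ps \ge C$, and $\sum_{t\le k}(psk/t)^t \le \sum_{t \le k}(Ck/t)^t = 2^{O(k)}$ for $ps \le C$.

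The remedies you sketch would not close the gap. The gain you can guarantee has exponent at most $c\,ps|T|/32 \le c\,psk/32$ with $c < 1$. So no reallocation of $|T|$ or retuning of $\eps$ can beat a loss as large as $e^{24psk}$; the saving has to come from the entropy side, via $j \le k$. Re-choosing $S$ is not available either. The lower bound $o_B(S \cup T) \ge |S||T|/32$ is only guaranteed for the witnessing pair $(S,T)$. Also, the containments $S \subset I \setminus N_B(v)$ and $T \subset N_B(v)$ that you invoke are not hypotheses of the lemma.
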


\begin{proof}
To choose the elements of $I$, we first reveal the graph $B$, choose a vertex $v \in V(B)$, and then choose $k - t$ elements of $N_B(v)$ for some $t \le k$. Next we choose the $3|S|$ fibres that cover the set $I \setminus N_B(v)$, and $t$ elements in the union of these fibres. This gives at most
$$m \sum_{t \eqa 0}^k {2pn \choose k - t} \sum_{s \eqa pm}^k {2m \choose 3s} {3sn/m \choose t}$$
choices for the set $I$, if $\Delta(B) \le 2pm$. Since $pk = C \log k$ and $m = \Theta\pig( p^{-2} \pig)$, this is at most
$$2^{O(k)} {pn \choose k} \sum_{s \eqa pm}^k \sum_{t \eqa 0}^k k^{6s} \bigg( \frac{k}{pn} \cdot \frac{p^2sn}{t} \bigg)^t.$$
Now, by Lemma~\ref{lem:manyopen:probbound}, for any choice of the graph $B$ and disjoint sets $S,T \subset V(G)$ such that $S \cup T$ is an $A$-transversal, satisfying~\eqref{eq:counting:e:sizes} and with $o_B(S \cup T) \ge |S||T|/32$, we have 
\begin{equation}\label{eq:counting:e}
\Pr_A\big( \gap S \cup T \in \cI(G) \text{ and }\gaaap \Delta(A) \le 2pm \gap \big) \le e^{-c^2p|S||T|},
\end{equation}
since $|S||T| = \Omega\pig( k^2 / \log k \pig) \gg k^{3/2}$. 

Taking a union bound over the choice of $S$ and $T$, and recalling that $|T| \ge ck$, it follows that (conditional on the event $\cD$ holding) the expected number of independent $k$-sets as in the lemma is at most 
$$2^{O(k)} {pn \choose k} \sum_{s \eqa pm}^k \sum_{t \eqa 0}^k k^{6s} \bigg( \frac{psk}{t} \bigg)^t e^{-c^3psk}.$$
We now split the sum over $s$ into two parts. For $s \le C/p$, we have
$$\sum_{s \eqa pm}^{C/p} \sum_{t \eqa 0}^k k^{6s} \bigg( \frac{psk}{t} \bigg)^t e^{-c^3psk} \le \sum_{t \eqa 0}^k \bigg( \frac{Ck}{t} \bigg)^t = 2^{O(k)},$$
since $pk = C \log k$ and $C \ge 4 / c^3$. On the other hand, for each $s \ge C/p$ we have 
$$\sum_{t \eqa 0}^k \bigg( \frac{psk}{t} \bigg)^t \le k \cdot \pig( ps \pig)^k \le e^{c^4psk},$$
since $C \ge 1 / c^5$, and hence 
$$\sum_{s \eqa C/p}^k \sum_{t \eqa 0}^k k^{6s} \bigg( \frac{psk}{t} \bigg)^t e^{-c^3psk} \le \sum_{s \eqa C/p}^k k^{6s} e^{-c^3psk/2} = o(1),$$
since $pk = C \log k$ and $C \ge 8 / c^3$. By~\eqref{eq:counting:e} and Markov, this proves the lemma.  
\end{proof}

We can now put the pieces together and complete the proof of Theorem~\ref{thm:acmgraph}.

\begin{proof}[Proof of Theorem~\ref{thm:acmgraph}]
By Lemmas~\ref{lem:bipartite:lemma} and~\ref{lem:D:holds}, the event $\cA$ holds with high probability, and by Lemma~\ref{lem:structural:lemma}, if $\cA$ holds then every independent $k$-set of $G$ satisfies one of the five properties $(a)$--$(e)$. Finally, by Lemmas~\ref{lem:counting:a}--\ref{lem:counting:e}, it follows that with high probability $G$ has at most $2^{O(k)} {pn \choose k}$ independent sets of size $k$, as required.  
\end{proof} 

\subsection*{Statement on AI use} 

The construction and proof strategy were developed by the authors in the traditional way, without any use of AI tools. While writing down the details, ChatGPT-5.6 Sol was used to simplify the proofs of some of the lemmas. In particular, Sol suggested the simple and elegant variant of the method of~\cite[Lemma~3.4]{MSV} that we used in the proof of Lemma~\ref{lem:good_to_probability}, and also the idea of proving Lemma~\ref{lem:bipartite:lemma} via Lemma~\ref{lem:bipartite:A}. The final version of the paper was written entirely by the authors.


\begin{thebibliography}{99}

\bibitem{AKSz} M.~Ajtai, J.~Koml{\'o}s and E.~Szemer{\'e}di, A note on Ramsey numbers, \emph{J.~Combin.~Theory Ser.~A}, \textbf{29} (1980), 354--360.


\bibitem{AR} N.~Alon and V.~R\"odl, Sharp bounds for some multicolour Ramsey numbers, \emph{Combinatorica}, \textbf{25} (2005), 125--141.





\bibitem{Br26} D.~Brada\v{c}, Off-diagonal Ramsey numbers, arXiv:2605.28793.


\bibitem{CJMS25} M.~Campos, M.~Jenssen, M.~Michelen and J.~Sahasrabudhe, A new lower bound for the Ramsey numbers $R(3,k)$, arXiv:2505.13371.

\bibitem{CJMPS} M.~Campos, M.~Jenssen, M.~Michelen, F.~Pfender and J.~Sahasrabudhe, A polynomial improvement for the odd cycle-complete Ramsey numbers, \emph{Combinatorica}, to appear.




\bibitem{E59} P.~Erd\H{o}s, Graph theory and probability, \emph{Canad. J. Math.}, \textbf{11} (1959), 34--38.

\bibitem{E61} P.~Erd\H{o}s, Graph theory and probability II, \emph{Canad. J. Math.}, \textbf{13} (1961), 346--352.

\bibitem{ESz} P.~Erd\H{o}s and G.~Szekeres, A combinatorial problem in geometry, \emph{Compositio Math.}, \textbf{2} (1935), 463--470.



\bibitem{HW} X.~He and Y.~Wigderson, Multicolor Ramsey Numbers via Pseudorandom Graphs, \emph{Electronic J.~Combin.}, \bf 27 \rm (2020), P1.32.

\bibitem{HHKP} Z.~Hefty, P.~Horn, D.~King and F.~Pfender, Improving $R(3,k)$ in just two bites, arXiv:2510.19718.

\bibitem{Kim} J.H.~Kim, The Ramsey number $R(3,t)$ has order of magnitude $t^2/\log t$, \emph{Random Structures Algorithms}, \textbf{7} (1995), 173--207.

\bibitem{KSSW} M. K\"uhn, L. Sauermann, R. Steiner and Y. Wigderson, Disproof of the Odd Hadwiger Conjecture, arXiv:2512.20392


\bibitem{ICM26} R.~Morris, Some recent results in Ramsey theory, \emph{Proceedings of the International Congress of Mathematicians}, Philadelphia, 2026, arXiv:2601.05221.

\bibitem{MSV} R. Morris, J. Sahasrabudhe and J. Verstra\"ete, On the Erdős--Rogers function, arXiv:2607.16118.

\bibitem{R30} F.P.~Ramsey, On a problem of formal logic, \emph{Proc. London Math. Soc.}, \textbf{30} (1930), 264--286.

\bibitem{Sh83} J.B.~Shearer, A note on the independence number of triangle-free graphs, \emph{Discrete Math.}, \textbf{46} (1983), 83--87.


\bibitem{SpR3k} J.~Spencer, Eighty Years of Ramsey $R(3,k)...$ and Counting!, In: Ramsey Theory: Yesterday, Today, and Tomorrow, pp. 27--39. Boston, MA: Birkh\"auser Boston, 2011.

\end{thebibliography}
\end{document}